\newcommand{\R}{\mathbb{R}}
\newcommand{\rank}{\operatorname{rank}}
\newcommand{\bx}{\mathbf{x}}
\newcommand{\bh}{\mathbf{h}}
\newcommand{\ba}{\mathbf{a}}
\newcommand{\bb}{\mathbf{b}}
\newcommand{\by}{\mathbf{y}}
\newcommand{\bc}{\mathbf{c}}
\newcommand{\bzero}{\mathbf{0}}
\newcommand{\Vect}{\operatorname{Vect}}
\newcommand{\VR}{\mathrm{VR}}
\newcommand{\proj}{{\textrm{Proj}}}
\newcommand{\dis}{{\textrm{dis}}}
\newcommand{\codis}{{\textrm{codis}}}
\newcommand{\C}{\mathcal{C}} 
\newtheorem{theorem}{Theorem}[subsection]
\newtheorem{corollary}[theorem]{Corollary}
\newtheorem{lemma}[theorem]{Lemma}
\newtheorem{prop}[theorem]{Proposition}
\theoremstyle{definition}
\newtheorem{example}[theorem]{Example}
\newtheorem{definition}[theorem]{Definition}
\newtheorem{remark}[theorem]{Remark}
\newtheorem{proposition}[theorem]{Proposition}
\newcommand{\define}[1]{\textbf{#1}}
\title{Persistent Homology for Labeled Datasets: Gromov-Hausdorff Stability and Generalized Landscapes}
\author{Yaoying Fu, Evgeniya Lagoda, 
Shiying Li, \\
Tom Needham, Lander Ver Hoef, and 
Morgan Weiler}
\begin{document}

\maketitle

\begin{abstract}
    Techniques from metric geometry have become fundamental tools in modern mathematical data science, providing principled methods for comparing datasets modeled as finite metric spaces. Two of the central tools in this area are the Gromov-Hausdorff distance and persistent homology, both of which yield isometry-invariant notions of distance between datasets. However, these frameworks do not account for categorical labels, which are intrinsic to many real-world datasets, such as labeled images, pre-clustered data, and semantically segmented shapes. In this paper, we introduce a general framework for labeled metric spaces and develop new notions of Gromov-Hausdorff distance and persistent homology which are adapted to this setting. Our main result shows that our persistent homology construction is stable with respect to our novel notion of Gromov-Hausdorff distance, extending a classic result in topological data analysis. To facilitate computation, we also introduce a labeled version of persistence landscapes and show that the landscape map is Lipschitz.
\end{abstract}

\section{Introduction} 

Techniques from metric geometry have come to play an increasingly important role in modern mathematical data science. From this perspective, a dataset is typically modeled as a finite metric space, and the focus is on developing methods for comparing these metric spaces. Two of the most prominent tools in this area are:

\medskip
\noindent The \textit{Gromov-Hausdorff distance}, which provides a notion of distance between finite metric spaces, defined by finding a correspondence between their underlying sets which optimally preserves metric structure (see Section~\ref{subsec:GH_distance} for details), leading to an isometry-invariant method for shape registration~\cite{Memoli2007}.

\medskip
\noindent \textit{Persistent homology}, which featurizes a finite metric space by tracking homological changes in an associated sequence of growing simplicial complexes. The resulting isometry-invariant signatures, referred to as \textit{persistence modules}, can be compared via a canonical metric called the \textit{interleaving distance}~\cite{chazal2009proximity}, and this pipeline is known to be stable, in the sense that the map taking a finite metric space to its persistence module is 2-Lipschitz with respect to the Gromov-Hausdorff and interleaving distances~\cite{chazal2009gromov}.

\smallskip
These methods (and variants thereof---e.g., the related \textit{Gromov-Wasserstein distance}~\cite{memoli2011gromov}) have found applications in network analysis~\cite{lee2011computing,sizemore2019importance}, community detection~\cite{chowdhury2021generalized}, multi-omic data alignment~\cite{demetci2022scot}, and image segmentation~\cite{hu2019topology}, among many others.

This paper was inspired by the observation that the techniques described above largely ignore an important aspect of many datasets: in practice, data is frequently endowed with \textit{categorical labels}. For example, a prototypical supervised learning task is to train a model to recognize the subject of an image in a dataset of labeled images; the dataset of interest is therefore not just a finite metric space, but a finite metric space endowed with additional label data. Further examples of label structures include datasets which come with pre-defined clusters, or shapes endowed with semantic segmentations---see Example \ref{ex:labeled_metric_spaces} and Figure \ref{fig:Data_Examples}. Neither Gromov-Hausdorff distance nor persistent homology include mechanisms for incorporating this important label structure into dataset comparisons. There has been a substantial amount of recent work addressing this deficiency for special cases of label structures~\cite{di2022chromatic,needham2023geometric,di2024chromatic,zhang2024geometry,stolz2024relational,draganov2025gromov}, but a truly general solution is lacking.

In this paper, we introduce a very general model for labeled datasets, which captures the various structures defined above, as well as new notions of Gromov-Hausdorff distance, persistent homology, and interleaving distance which are adapted to compare these objects. Our main contributions are as follows:

\begin{itemize}
    \item {\bf Labeled Gromov-Hausdorff Distances.} We define a \textit{labeled metric space} to be a metric space $(X,d_X)$ endowed with a finite sequence of subsets $L_X = (X_1,\ldots,X_k)$ that cover $X$ (Definition \ref{def:labeled_metric_space}). This model is flexible enough to capture the various commonly-occuring data structures described in Example \ref{ex:labeled_metric_spaces}. We then define three increasingly general notions of Gromov-Hausdorff distance for comparing labeled metric spaces; these are designed to handle specific scenarios that one may encounter when performing such comparisons, defined by prior knowledge of correspondences between the labels. Metric properties of these Gromov-Hausdorff distances are established in Theorem \ref{thm:k_GH}, Corollary \ref{cor:GH_mod_perm}, and Theorem \ref{thm:GH_mod_stab}, respectively. 

    \item {\bf Labeled Persistent Homology and Interleaving Distance.} We introduce a notion of persistent homology of labeled metric spaces which tracks the topological interactions of different labeled components of the datasets---see Definition \ref{def:labeled_vietoris_rips}. The resulting structure is a particular example of a \textit{multiparameter persistence module}~\cite{botnan2022introduction}, which we endow with a notion of interleaving distance that is different than the one usually considered in the multiparameter persistent homology literature (Definition \ref{def:epsilon-interleaving}). Our main results extend the well-known Gromov-Hausdorff stability result of~\cite{chazal2009gromov} to the labeled setting. These results comprise Theorem \ref{thm:stability registered} and Corollaries \ref{cor:stability_unregistered_classes} and \ref{cor:stability_modulo_stabilizations}. 

    \item {\bf Labeled Persistence Landscapes.} The labeled persistence modules we introduce are useful at a theoretical or conceptual level, but are inconvenient to work with computationally. To overcome this difficulty, we provide a method for vectorizing these modules by generalizing the notion of persistence landscapes~\cite{Bubenik15,Vipond20} to the labeled setting. This provides a way to map a labeled persistence module into a Banach space, and we show in Theorem \ref{thm:stability_of_landscapes} that this mapping is Lipschitz stable, with respect to the Banach space and interleaving distances.
\end{itemize}

The paper is organized as follows. In Section \ref{sec:GH distance for labeled}, we define labeled metric spaces, the associated Gromov-Hausdorff distances, and establish properties of these distances. Section \ref{sec:persistent_homology} is devoted to developing the persistent homology theory for labeled datasets. Persistence landscapes for labeled data are studied theoretically in Section \ref{sec:landscapes}. Finally, computational aspects of landscapes are treated in Section \ref{s:experiments}, where proof-of-concept numerical examples are provided.

\section{Gromov-Hausdorff Distance for Labeled Datasets}\label{sec:GH distance for labeled}

\begin{figure}[h]
\begin{overpic}[width=\linewidth,unit=1mm]{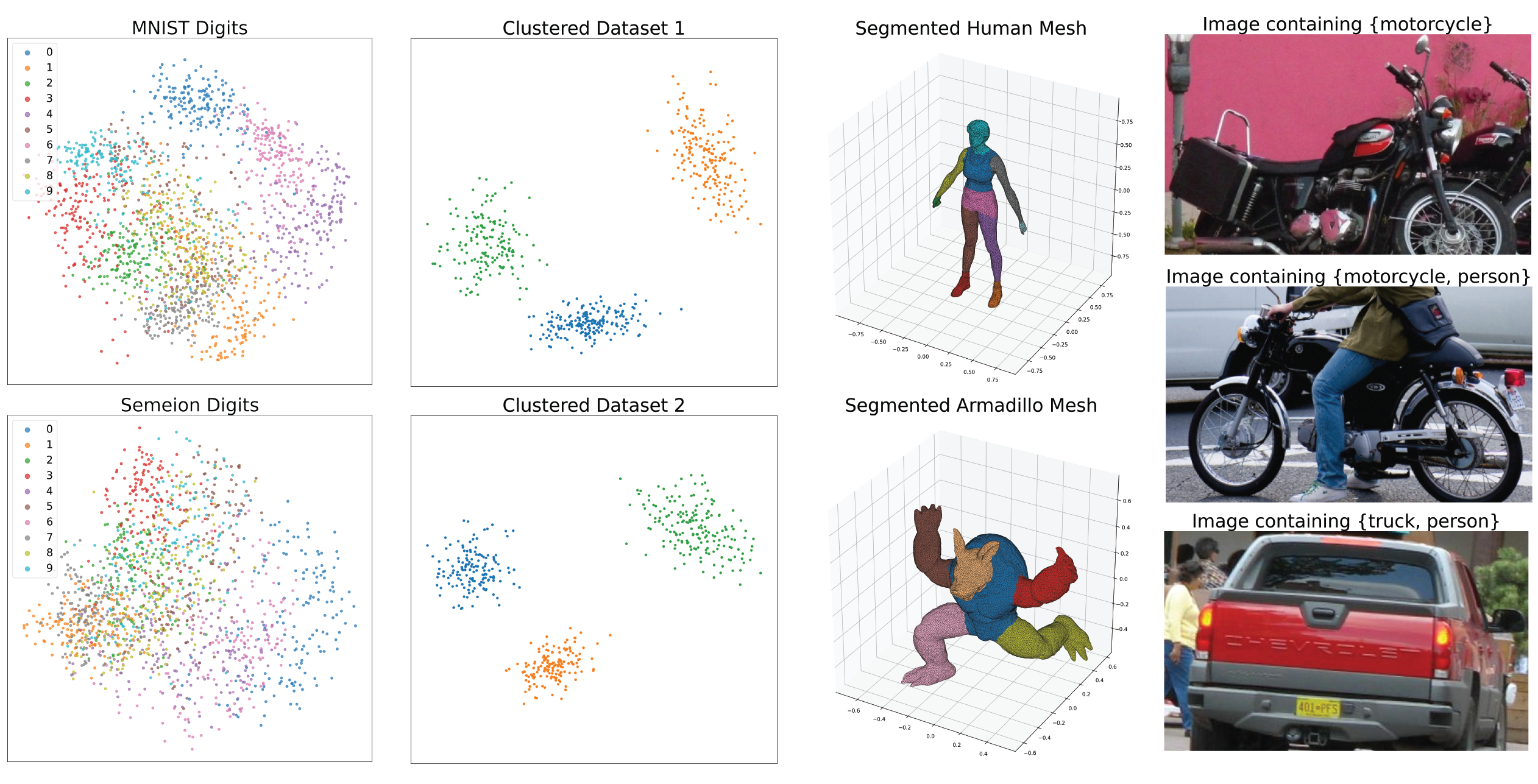}
\put(16,-4){(a)}
\put(56,-4){(b)}
\put(93,-4){(c)}
\put(130,-4){(d)}
\end{overpic}
\vspace{1mm}
\caption{Examples of labeled metric spaces arising in data science applications, representing several modalities for our framework:  {\bf (a)} registered labels,  {\bf (b)} permuted labels, {\bf (c)}  unregistered labels, and {\bf (d)} multiple labels. See Example \ref{ex:labeled_metric_spaces} for details.}\label{fig:Data_Examples}
\end{figure}

This section is devoted to extending the Gromov-Hausdorff distance \eqref{eq:GH_distance} to the setting where the metric spaces $X$ and $Y$ have additional structure. It is frequently the case in data science applications that a dataset $X$ is \define{labeled}; that is, it is endowed with a map to some finite set of labels, $\ell:X \to \{\ell_1,\ldots,\ell_k\}$. These label structures motivate the additional structure we describe below. 

\subsection{Labeled Metric Spaces} 

 Motivated by the labeled datasets discussed above, the following definition captures a more general notion than this labeling map. 
 
\begin{definition}[Labeled Metric Space]\label{def:labeled_metric_space}
    Let $k$ be a positive integer. A \define{$k$-labeled metric space} is a compact metric space $(X,d_X)$ endowed with a finite sequence $L_X = (X_1,\ldots,X_k)$ of nonempty subsets $X_i \subset X$ such that the sets cover $X$, i.e., $X = \cup_{i=1}^k X_i$. We denote the triple $\mathcal{X} = (X,d_X,L_X)$. Each $X_i$ is called a \define{label set} and $L_X$ is referred to as the \define{labels} of $\mathcal{X}$.  We use the terminology \define{labeled metric space} for a $k$-labeled metric space when we wish to keep the number of labels $k$ ambiguous.
\end{definition}

\begin{example}\label{ex:labeled_metric_spaces}
In the following subsections, we will introduce methods for comparing labeled metric spaces $\mathcal{X} = (X,d_X,L_X)$ and $\mathcal{Y} = (Y,d_Y,L_Y)$. Our framework handles  several realistic situations of increasing generality, which we now illustrate with simple examples, as shown in Figure \ref{fig:Data_Examples}.

\paragraph{Registered Labels.} Suppose that $\mathcal{X}$ and $\mathcal{Y}$ are both $k$-labeled datasets, for some $k$. It may be the case that there is a known correspondence between label sets, i.e., $X_i$ should be compared to $Y_i$ for each $i \in \{1,\ldots,k\}$. In this case, we say that $\mathcal{X}$ and $\mathcal{Y}$ have \define{registered labels}. Our philosophy in this situation is that, when comparing the structure of such a pair $\mathcal{X}$ and $\mathcal{Y}$, the label data should be taken into account. The specifics of our approach are explained in \S\ref{subsec:GH_registered}.

Consider the example shown in Figure \ref{fig:Data_Examples}(a). Here, $\mathcal{X}$ is the MNIST dataset of handwritten digits: the set $X$ consists of a large collection of greyscale images of digits $0-9$, written by different subjects, the metric $d_X$ is Euclidean distance between the image matrices, and the $k=10$ labels correspond to digit classifications ($X_1$ contains all instances of the digit zero, $X_2$ contains all instances of the digit one, etc.). The labeled metric space $\mathcal{Y}$ has a very similar structure, consisting of an alternative dataset of handwritten digits $0-9$, called the \textit{Semeion dataset}~\cite{semeion_handwritten_digit_178}. Here $Y_1$ contains instances of the digit zero, $Y_2$ contains instances of the digit one, etc., so that $\mathcal{X}$ and $\mathcal{Y}$ have registered labels. The pointclouds in Figure \ref{fig:Data_Examples}(a) are MDS embeddings of the respective datasets. 

\paragraph{Permuted Labels.} Next, consider the situation where $\mathcal{X}$ and $\mathcal{Y}$ are both $k$-labeled metric spaces such that a natural matching between labels exists, but is not known. Here, we say that $\mathcal{X}$ and $\mathcal{Y}$ have \define{permuted labels}. Our idea in this case is that one should attempt to register the label classes when comparing structures of this form, as we describe in \S\ref{subsec:GH_permuted}.

Figure \ref{fig:Data_Examples}(b) shows a pair of synthetic point clouds in $\R^2$, $X$ and $Y$, each of which has been clustered via $k$-means with $k=3$. These clusterings define label structures on the datasets, yielding 3-labeled metric spaces $\mathcal{X}$ and $\mathcal{Y}$, respectively. There is a natural correspondence between clusters of $X$ and $Y$, but this is not necessarily reflected by the labels, as $k$-means clustering returns the clusters in random order. Hence $\mathcal{X}$ and $\mathcal{Y}$ have permuted labels. In situations such as this, one must determine the correct correspondence between labels to compare the datasets while preserving cluster structure. We remark that such a procedure is typically a subroutine in ``divide-and-conquer'' point cloud registration algorithms, such as those considered in~\cite{blumberg2020mrec,chowdhury2021quantized}.

\paragraph{Unregistered Labels.} Iterating on the above idea, we now consider the case where $\mathcal{X}$ and $\mathcal{Y}$ have \define{unregistered labels}, potentially of unequal size---i.e.,  where $\mathcal{X}$ is a $k$-labeled metric space and $\mathcal{Y}$ is an $\ell$-labeled metric space, with $k \neq \ell$ allowed, and where there is no known correspondence between label sets. The goal when comparing data of this form is once again to find a registration of the labels in some sense, but the correct notion is not as clear. Our particular approach is described in \S\ref{subsec:GH_unregistered}.

An example of the unregistered situation is shown in Figure \ref{fig:Data_Examples}(c). Here, $X$ is a surface (given in mesh form)  representing a human figure, which is endowed with a metric $d_X$ (e.g., extrinsic Euclidean distance or intrinsic geodesic distance), and labels are given by an automated segmentation of the figure, leading to a labeled metric space $\mathcal{X}$. Similarly, $\mathcal{Y}$ is a segmented surface representing an armadillo---both examples are taken from the mesh segmentation benchmark dataset first published in~\cite{Chen:2009:ABF}. Observe that there is an intuitive semantic correspondence between label sets of $\mathcal{X}$ and $\mathcal{Y}$, but that the human figure has a more detailed segmentation, leading to an unequal number of labels between the spaces. While one may wish to compare these two meshes in a part-aware manner, this imbalance in label sets renders the correct approach to this task unclear. 

\paragraph{Overlapping Labels.} Finally, we give an example which is intended to clarify a particular choice in our definition. Observe that the definition of a labeled metric space does not require the labels $L_X$ to form a partition of $X$; that is, it is allowed for $X_i \cap X_j$ to be nonempty for $i \neq j$. This flexibility will be utilized to mathematically handle the unregistered labels setting in \S\ref{subsec:GH_unregistered}. Moreover, this definition is useful for modeling real data, where labels may naturally overlap. 

Consider a labeled metric space $\mathcal{X}$ where the underlying set $X$ is a collection of natural images, and the label indices correspond to potential subjects in the images---for example,  subject labels `motorcycle', `person', and `truck' may be enumerated as 1, 2 and 3 (see Figure \ref{fig:Data_Examples}(d)). Then an image in $X$ may receive multiple labels if it contains multiple subjects of interest, leading to overlaps in the label sets. The images in Figure \ref{fig:Data_Examples}(d), coming from a Kaggle dataset~\cite{kaggle_dataset}, represent elements of $X_1$, $X_1 \cap X_2$ and $X_2 \cap X_3$, respectively (with the enumeration described above). 

\end{example}

Below, we will define three increasingly general notions of Gromov-Hausdorff distance between labeled metric spaces, corresponding to the first three classes of examples described above.

\subsection{Background on Gromov-Hausdorff Distance}\label{subsec:GH_distance}

Before introducing our new notions of Gromov-Hausdorff distances, we recall the definition and related concepts in the classical setting. 

Given two sets $X$ and $Y$, a subset $C \subset X \times Y$ is called a \define{correspondence} between $X$ and $Y$ if 
\begin{equation}\label{eq:correspondence}
    \proj_X C = X \quad  \textrm{and } \quad \proj_Y C = Y,
\end{equation}
where $\proj_X$ and $\proj_Y$ are the coordinate projection maps. Let $\C(X,Y)$ denote the set of all correspondences between $X$ and $Y$. 

Let $(X,d_X)$ and $(Y,d_Y)$ be compact metric spaces, which we denote as $X$ and $Y$, respectively, when the presence of fixed metrics is understood from context. Recall that the \define{Gromov-Hausdorff distance} between $X$ and $Y$ can be expressed as (see, for example, \cite[Chapter 7]{burago2001course}) 
\begin{equation}\label{eq:GH_distance}
\mathsf{GH}(X,Y) \coloneqq \frac{1}{2} \inf_{C \in \mathcal{C}(X,Y)} \dis(C),
\end{equation}
where the \define{distortion} of the correspondence $C$ is defined by 
\begin{equation}\label{eq:distortion}
\dis(C) \coloneqq \sup_{\substack{(x_1, y_1) \in C \\ (x_2, y_2) \in C}}|d_X(x_1,x_2)-d_Y(y_1,y_2)|.
\end{equation}

\subsection{Gromov-Hausdorff Distance for Registered Labels}\label{subsec:GH_registered}

The first notion of Gromov-Hausdorff distance that we consider is relevant when two $k$-labeled datasets $\mathcal{X}$ and $\mathcal{Y}$ have registered labels; i.e., one has prior knowledge that label set $X_1$ should correspond to $Y_1$, $X_2$ to $Y_2$, et cetera (see Example \ref{ex:labeled_metric_spaces}).  It will use the following generalization of distortion.

\begin{definition}[Generalized Distortion]\label{def:generalized_distortion}
The \define{distortion} associated with two correspondences $C_1\in \C(X_1, Y_1), C_2 \in \C(X_2,Y_2)$, with $X_1, X_2 \subset X$ and $Y_1, Y_2 \subset Y$ nonempty subsets, is defined to be 
\begin{equation}
    \dis(C_1, C_2)\coloneqq \sup_{\substack{(x_1, y_1) \in C_1 \\ (x_2, y_2) \in C_2}}|d_X(x_1,x_2)-d_Y(y_1,y_2)|.
\end{equation}
\end{definition}

This generalized definition recovers the notion used in \eqref{eq:distortion} when $X_1 = X_2 = X$, $Y_1 = Y_2 =  Y$, and we adopt the notational convention that $\dis(C,C)= \dis(C)$. We also remark that a version of the distortion in Definition \ref{def:generalized_distortion} was used recently in~\cite{needham2024stability}, in the context of comparing \textit{hypergraph} structures.

\begin{definition}[$k$-Labeled Gromov-Hausdorff Distance]\label{GH def1}
 Let $\mathcal{X}$ and $\mathcal{Y}$ be $k$-labeled metric spaces. The \define{$k$-labeled Gromov-Hausdorff distance} between $\mathcal{X}$ and $\mathcal{Y}$ is 
 \begin{equation}\label{eqn:k_labeled_GH}
     \mathsf{GH}_k(\mathcal{X},\mathcal{Y}) \coloneqq \frac{1}{2}\inf_{C}\max_{i,j=1,...,k}\dis(C_i, C_j),
 \end{equation}
 where the infimum is over all $k$-tuples of correspondences $C\coloneqq(C_1,\ldots,C_k)$ with $C_i \in \mathcal{C}(X_i,Y_i)$. 
\end{definition}


It will be useful to consider an alternative formulation, based on mappings between metric spaces.

\begin{definition}\label{GH def2}
Define the \define{mapping version of $k$-labeled Gromov-Hausdorff distance} between $k$-labeled metric spaces $\mathcal{X}$ and $\mathcal{Y}$ to be
\begin{equation}\label{eqn:k_labeled_mapping}
\widetilde{\mathsf{GH}}_k(\mathcal{X},\mathcal{Y})= \frac{1}{2}\inf_{\phi,\psi} \max_{i,j} \{\dis(\phi_i, \phi_j), \dis(\psi_i, \psi_j), \codis(\phi_i, \psi_j)\},
\end{equation}
where the infimum is over all length-$k$ sequences $\phi=(\phi_1,\ldots,\phi_k)$ and $\psi=(\psi_1,\ldots,\psi_k)$ of (not necessarily continuous) maps $\phi_i:X_i \to Y_i$ and $\psi_i:Y_i \to X_i$, and 
\begin{align*}
    \dis(\phi_i,\phi_j)&\coloneqq \sup_{x_i\in X_i, x_j\in X_j}\big|d_X(x_i,x_j)- d_Y(\phi_i(x_i), \phi_j(x_j))\big|,\\
    \dis(\psi_i,\psi_j)&\coloneqq \sup_{y_i\in Y_i, y_j\in Y_j}\big|d_X(\psi_i(y_i),\psi_j(y_j))- d_Y(y_i,y_j)\big|,\\
    \codis(\phi_i,\psi_j) &\coloneqq \sup_{x_i\in X_i, y_j \in Y_j} \big|d_X(x_i, \psi_j(y_j))-d_Y(\phi_i(x_i),y_j)\big|.
\end{align*}  
We will  adopt the notational  convention that $ \dis(\phi_i) \coloneqq \dis(\phi_i, \phi_i)$.
\end{definition}

The next proposition shows that the formulations of GH distance given in \eqref{eqn:k_labeled_GH} and \eqref{eqn:k_labeled_mapping} are equivalent. This mirrors the situation for classical GH distance, as was shown in \cite{kalton1999distances}. 

\begin{proposition}\label{prop:equality_of_GH}
    The Gromov-Hausdorff distances given in  \Cref{GH def1} and \Cref{GH def2} are equivalent; i.e., $\mathsf{GH}_k(\mathcal{X},\mathcal{Y})= \widetilde{\mathsf{GH}}_k(\mathcal{X},\mathcal{Y})$.
\end{proposition}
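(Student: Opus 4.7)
The plan is to prove equality by establishing both inequalities via the standard passage between correspondences and map pairs, adapted to the labeled setting.

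For the inequality $\mathsf{GH}_k(\mathcal{X},\mathcal{Y}) \leq \widetilde{\mathsf{GH}}_k(\mathcal{X},\mathcal{Y})$, I would start with any pair of sequences $\phi=(\phi_1,\ldots,\phi_k)$ and $\psi=(\psi_1,\ldots,\psi_k)$ of maps $\phi_i:X_i \to Y_i$, $\psi_i:Y_i \to X_i$, and for each $i$ build the candidate set
\begin{equation*}
C_i \coloneqq \{(x,\phi_i(x)) : x \in X_i\} \cup \{(\psi_i(y),y) : y \in Y_i\} \subset X_i \times Y_i.
\end{equation*}
Each $C_i$ satisfies \eqref{eq:correspondence} because the first piece surjects onto $X_i$ under $\proj_{X_i}$, and the second onto $Y_i$ under $\proj_{Y_i}$. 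To estimate $\dis(C_i,C_j)$, I would fix arbitrary pairs $(x_1,y_1) \in C_i$ and $(x_2,y_2) \in C_j$ and split into four cases according to which sub-union contains each pair: the two ``pure'' cases produce quantities bounded by $\dis(\phi_i,\phi_j)$ and $\dis(\psi_i,\psi_j)$ respectively, while the two ``mixed'' cases produce quantities bounded by $\codis(\phi_i,\psi_j)$ and $\codis(\phi_j,\psi_i)$. Since the max in \eqref{eqn:k_labeled_mapping} ranges over all ordered pairs $(i,j)$, both codistortion terms are already accounted for, yielding $\max_{i,j} \dis(C_i,C_j) \leq \max_{i,j}\{\dis(\phi_i,\phi_j),\dis(\psi_i,\psi_j),\codis(\phi_i,\psi_j)\}$ and therefore the desired inequality after taking infima.

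For the reverse inequality $\widetilde{\mathsf{GH}}_k(\mathcal{X},\mathcal{Y}) \leq \mathsf{GH}_k(\mathcal{X},\mathcal{Y})$, I would start from any $k$-tuple $C=(C_1,\ldots,C_k)$ of correspondences with $C_i \in \C(X_i,Y_i)$ and, invoking the axiom of choice using the nonemptiness guaranteed by \eqref{eq:correspondence}, define $\phi_i:X_i \to Y_i$ by selecting, for each $x \in X_i$, some $\phi_i(x)$ with $(x,\phi_i(x)) \in C_i$, and analogously $\psi_i:Y_i \to X_i$ from the other projection. Each of the three supremum-type quantities appearing in Definition~\ref{GH def2} then reduces to comparing points drawn from $C_i$ and $C_j$ in exactly the form tracked by $\dis(C_i,C_j)$: for instance, a point $x_i \in X_i$ pairs with $(x_i,\phi_i(x_i)) \in C_i$ and a point $y_j \in Y_j$ pairs with $(\psi_j(y_j),y_j) \in C_j$, making $|d_X(x_i,\psi_j(y_j))-d_Y(\phi_i(x_i),y_j)| \leq \dis(C_i,C_j)$. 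Taking suprema and then the max over $i,j$ gives the inequality, and infimizing over $C$ concludes the argument.

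I do not anticipate a genuine obstacle here; this is essentially bookkeeping built on the classical equivalence of \cite{kalton1999distances}. The only point requiring attention is verifying that the single term $\codis(\phi_i,\psi_j)$ inside the max suffices to bound both types of mixed pairings arising in the first direction, which is resolved by the fact that the outer maximum runs over all ordered pairs of label indices.
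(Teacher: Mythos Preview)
Your proposal is correct and follows essentially the same approach as the paper: the paper only spells out the direction $\mathsf{GH}_k \geq \widetilde{\mathsf{GH}}_k$ (extracting $\phi_i,\psi_i$ from each $C_i$ via the Axiom of Choice and bounding all three distortion-type quantities by $\dis(C_i,C_j)$), deferring the rest to the classical argument of \cite{kalton1999distances}. Your write-up is actually more complete, since you also detail the other direction and explicitly flag the point that the max over ordered pairs $(i,j)$ absorbs both $\codis(\phi_i,\psi_j)$ and $\codis(\phi_j,\psi_i)$.
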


\begin{proof}
    The proof follows by mildly adapting the proof given in \cite{kalton1999distances} in the classical case. As the construction will be useful later, let us give details for the bound $\mathsf{GH}_k \geq \widetilde{\mathsf{GH}}_k$. Let $C_i \in \mathcal{C}(X_i,Y_i)$ for each $i \in \{1,\ldots,k\}$. By the Axiom of Choice, there exists $\phi_i: X_i \rightarrow Y_i$ and $\psi_i: Y_i\rightarrow X_i$ such that 
    \[
    \{(x,\phi_i(x)) \mid x \in X_i\} \cup \{(\psi_i(y),y) \mid y \in Y_i\} \subset C_i
    \]
    Observe that, for all $i,j$, 
\begin{equation*}
    \dis(C_i, C_j) \geq \max \{\dis(\phi_i, \phi_j), \dis(\psi_i,\psi_j), \codis(\phi_i,\psi_j), \codis(\phi_j,\psi_i)\}.
\end{equation*}
    This proves the claimed inequality.
\end{proof}

We now define an appropriate notion of equivalence for labeled metric spaces. 

\begin{definition}[Isomorphic Labeled Metric Spaces]\label{def:k_isomorphic}
We say that two $k$-labeled metric spaces $\mathcal{X}$ and $\mathcal{Y}$ are \define{$k$-isomorphic} if there exists an isometry $\phi: X\rightarrow Y$ such that $\phi(X_i) = Y_i$ for each $i=1,...,k$. Such a map is called a \define{$k$-isomorphism}. 
\end{definition}

A simple example of $2$-isomorphic labeled metric spaces is shown in Figure \ref{fig:Isomorphism_examples}. It is straightforward to show that $k$-isomorphism defines an equivalence relation on the space of $k$-labeled metric spaces. This leads to the following theorem on the metric structure of $\mathsf{GH}_k$.

\begin{theorem}\label{thm:k_GH}
    The $k$-labeled Gromov-Hausdorff distance defines a metric on the space of $k$-labeled metric spaces, considered up to $k$-isomorphism. 
\end{theorem}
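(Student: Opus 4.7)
There are four metric axioms to verify: non-negativity (immediate from the definition as a supremum of absolute values), symmetry (immediate since $\dis(C_i,C_j)$ and the generalized distortion are symmetric in $X$ and $Y$ via swapping coordinates), the identity of indiscernibles, and the triangle inequality. I will first dispatch the triangle inequality, then handle the two directions of ``$\mathsf{GH}_k(\mathcal{X},\mathcal{Y})=0 \iff \mathcal{X} \cong_k \mathcal{Y}$'', since the ``only if'' direction is the one nontrivial step.

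\textbf{Triangle inequality via composition of correspondences.} Given $k$-labeled spaces $\mathcal{X}, \mathcal{Y}, \mathcal{Z}$, and correspondences $C_i^{XY}\in \mathcal{C}(X_i,Y_i)$ and $C_i^{YZ}\in \mathcal{C}(Y_i,Z_i)$ for $i=1,\dots,k$, I will form the composite correspondences
\begin{equation*}
C_i^{XZ} \coloneqq \{(x,z) \mid \exists\, y \in Y_i \text{ with } (x,y) \in C_i^{XY} \text{ and } (y,z) \in C_i^{YZ}\} \in \mathcal{C}(X_i,Z_i).
\end{equation*}
For $(x_1,z_1)\in C_i^{XZ}$ and $(x_2,z_2)\in C_j^{XZ}$, choosing witnesses $y_1\in Y_i$ and $y_2\in Y_j$ and inserting $\pm d_Y(y_1,y_2)$ gives
\begin{equation*}
|d_X(x_1,x_2)-d_Z(z_1,z_2)| \leq \dis(C_i^{XY},C_j^{XY}) + \dis(C_i^{YZ},C_j^{YZ}).
\end{equation*}
Taking the maximum over $i,j$ and then the infimum over the two families of correspondences yields $\mathsf{GH}_k(\mathcal{X},\mathcal{Z}) \leq \mathsf{GH}_k(\mathcal{X},\mathcal{Y}) + \mathsf{GH}_k(\mathcal{Y},\mathcal{Z})$.

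\textbf{Identity of indiscernibles, easy direction.} If $\phi:X\to Y$ is a $k$-isomorphism, then $C_i \coloneqq \{(x,\phi(x))\mid x \in X_i\}$ lies in $\mathcal{C}(X_i,Y_i)$ because $\phi(X_i)=Y_i$, and each $\dis(C_i,C_j)$ vanishes by isometry, so $\mathsf{GH}_k(\mathcal{X},\mathcal{Y})=0$.

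\textbf{Identity of indiscernibles, hard direction; this is the main obstacle.} Suppose $\mathsf{GH}_k(\mathcal{X},\mathcal{Y})=0$. I will work with the mapping version $\widetilde{\mathsf{GH}}_k$ via Proposition~\ref{prop:equality_of_GH}, so that for each $n$ there exist sequences $\phi^{(n)}=(\phi_1^{(n)},\dots,\phi_k^{(n)})$ and $\psi^{(n)}=(\psi_1^{(n)},\dots,\psi_k^{(n)})$ all of whose pairwise distortions and codistortions are at most $1/n$. The plan is to extract limit maps and glue them. Fix a countable dense subset of $X$ and of $Y$; compactness of $Y$ and $X$ plus a standard diagonal subsequence argument produces a subsequence along which $\phi_i^{(n)}$ and $\psi_i^{(n)}$ converge pointwise on these dense subsets to maps $\phi_i$ and $\psi_i$ that extend (by an $\varepsilon/3$ argument using $\dis(\phi_i)=0$) to isometric embeddings $\phi_i:X_i \to Y_i$ and $\psi_i:Y_i \to X_i$. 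The compatibility conditions will give the gluing:
\begin{itemize}
    \item $\dis(\phi_i,\phi_j)=0$ evaluated at $x_i=x_j=x \in X_i\cap X_j$ forces $\phi_i(x)=\phi_j(x)$, so the $\phi_i$ assemble into a well-defined map $\phi:X \to Y$; likewise the $\psi_i$ assemble into $\psi:Y\to X$.
    \item $\codis(\phi_i,\psi_i)=0$ evaluated at $y=\phi_i(x)$ forces $\psi_i(\phi_i(x))=x$, and symmetrically $\phi\circ\psi=\mathrm{id}_Y$, so $\phi$ and $\psi$ are mutually inverse bijections and hence isometries.
    \item By construction $\phi(X_i)\subseteq Y_i$ and $\psi(Y_i)\subseteq X_i$, and since $\psi=\phi^{-1}$ both inclusions are equalities, giving $\phi(X_i)=Y_i$.
\end{itemize}
Thus $\phi$ is a $k$-isomorphism, completing the proof. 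The real delicacy is the subsequence extraction (making sure one diagonal subsequence works simultaneously for all $2k$ sequences of maps indexed by $i$), together with the gluing step above; once the compactness argument is set up, the rest is a matter of unpacking the vanishing distortion and codistortion identities on the nose.
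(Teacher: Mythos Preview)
Your argument is correct and the overall structure matches the paper's, but the key step---producing label-wise maps $\phi_i,\psi_i$ with \emph{exactly} zero distortion and codistortion---is obtained by a genuinely different mechanism. The paper invokes Lemma~\ref{lem:optimal_correspondences} (the infimum in \eqref{eqn:k_labeled_GH} is attained), so it gets correspondences $C_i$ with $\dis(C_i,C_j)=0$ outright and then extracts $\phi_i,\psi_i$ from them via the construction in the proof of Proposition~\ref{prop:equality_of_GH}; no limiting procedure is needed. You instead run the classical Gromov-style program: take $(1/n)$-approximate maps, pass to a diagonal subsequence on a countable dense set, and extend by the isometry property. Both routes then glue the $\phi_i$ into a global map using the same observation that $\dis(\phi_i,\phi_j)=0$ forces $\phi_i=\phi_j$ on $X_i\cap X_j$; the paper packages this gluing via the auxiliary disjoint sets $X_i'=X_i\setminus\bigcup_{j<i}X_j$, while you phrase it directly as agreement on overlaps, which is equivalent. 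The paper's route is shorter because the compactness work is hidden in the lemma; yours is more self-contained and closer to the textbook proof for ordinary Gromov--Hausdorff distance.

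One technical caveat worth flagging: in your limiting step you use compactness of the ambient $Y$ (and $X$), so the pointwise limits of $\phi_i^{(n)}$ a priori land only in $\overline{Y_i}\subset Y$, not in $Y_i$ itself; the codistortion identity $\psi_i(\phi_i(x))=x$ then requires $\phi_i(x)\in Y_i$ to even be applied. This is harmless when the label sets are closed, but Definition~\ref{def:labeled_metric_space} allows arbitrary nonempty subsets. The paper's route avoids this wrinkle because the maps are read off correspondences $C_i\subset X_i\times Y_i$ and hence land in $Y_i$ by construction (of course, the analogous compactness issue is then absorbed into the omitted proof of Lemma~\ref{lem:optimal_correspondences}).
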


The proof uses the following lemma, which is an adaptation of the analogous statement in the setting of GH distance, found in \cite[Proposition 1.1]{chowdhury2018explicit}. The proof of our lemma follows by mildly adapting the existing proof, so we omit it. 

\begin{lemma}\label{lem:optimal_correspondences}
    The distance $\mathsf{GH}_k(\mathcal{X},\mathcal{Y})$ is always realized by some correspondences $(C_1,C_2,\ldots,C_k)$; that is, the infimum in \eqref{eqn:k_labeled_GH} is a minimum.
\end{lemma}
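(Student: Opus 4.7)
The plan is to adapt the standard compactness argument for the single-label case (as in \cite[Proposition 1.1]{chowdhury2018explicit}). Start with a minimizing sequence of $k$-tuples of correspondences $(C_1^{(n)},\ldots,C_k^{(n)})_{n \in \N}$, chosen so that $\max_{i,j}\dis(C_i^{(n)}, C_j^{(n)}) \to 2\mathsf{GH}_k(\mathcal{X},\mathcal{Y})$. Since each distortion is a supremum of the continuous function $((x,y),(x',y')) \mapsto |d_X(x,x') - d_Y(y,y')|$ on $(X\times Y)^2$, its value on $C_i^{(n)}, C_j^{(n)}$ is unchanged when these are replaced by their closures in $X \times Y$, so I may assume each $C_i^{(n)}$ is closed.

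Next, I would exploit compactness: the space of nonempty closed subsets of the compact space $X \times Y$ is itself compact under the Hausdorff metric. Applying this coordinate-wise and diagonalizing over $i = 1,\ldots,k$, one extracts a subsequence along which $C_i^{(n)} \to C_i^\star$ in Hausdorff distance for every $i$, with each $C_i^\star$ a nonempty closed subset of $X \times Y$.

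Two things then need to be checked. First, each $C_i^\star$ is still a correspondence between $X_i$ and $Y_i$: given $x \in X_i$, each $C_i^{(n)}$ contains a pair $(x, y_n)$, and a subsequence of $(y_n)$ converges to some $y$ by compactness, with $(x,y) \in C_i^\star$ by definition of Hausdorff convergence, yielding $\proj_X C_i^\star \supseteq X_i$; the other projection is symmetric. Second, the distortion functional is continuous under Hausdorff convergence of closed subsets of the compact space $X\times Y$: upper semicontinuity follows since any pair of points in $C_i^\star \times C_j^\star$ is a limit of pairs in $C_i^{(n)} \times C_j^{(n)}$, while lower semicontinuity follows by extracting convergent subsequences from pairs nearly achieving the suprema defining $\dis(C_i^{(n)}, C_j^{(n)})$. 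Combining these facts gives $\max_{i,j}\dis(C_i^\star, C_j^\star) = 2\mathsf{GH}_k(\mathcal{X},\mathcal{Y})$, so the tuple $(C_1^\star,\ldots,C_k^\star)$ attains the infimum.

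The main obstacle is a technicality: since the definition of a labeled metric space only requires each $X_i$ to be a nonempty subset of $X$, the Hausdorff limits $C_i^\star$ live a priori in $\overline{X_i} \times \overline{Y_i}$ rather than $X_i \times Y_i$. The cleanest resolution is to observe that the closure map $C_i \mapsto \overline{C_i}$ preserves both the correspondence property and the distortion, so the infimum over $\mathcal{C}(X_i,Y_i)$ equals that over $\mathcal{C}(\overline{X_i},\overline{Y_i})$; the argument above then attains this common infimum, which suffices for the lemma (and indeed label sets may be assumed compact with no loss, as is natural in the finite-data settings of Example~\ref{ex:labeled_metric_spaces}).
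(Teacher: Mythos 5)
Your compactness argument is exactly the one the paper intends (it cites \cite[Proposition 1.1]{chowdhury2018explicit} and omits the adaptation), and its core is sound: closing up the $C_i^{(n)}$, invoking compactness of the hyperspace of closed subsets of $X\times Y$ under the Hausdorff metric, checking that the limits retain full projections, and using uniform continuity of $|d_X(\cdot,\cdot)-d_Y(\cdot,\cdot)|$ to pass the distortion to the limit. You have also correctly isolated the one place where the adaptation is not routine, namely that the label sets $X_i,Y_i$ need not be closed.

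Your resolution of that point, however, does not close the gap. Producing a minimizer in $\mathcal{C}(\overline{X_i},\overline{Y_i})$ that attains the common infimal value does not show that the infimum in \eqref{eqn:k_labeled_GH}, which ranges over tuples in $\mathcal{C}(X_i,Y_i)$, is a \emph{minimum} --- and in general it is not. Take $X=Y=[0,1]$ with $X_1=X_2=[0,1]$, $Y_1=[0,1)$, $Y_2=[0,1]$; these are valid $2$-labeled metric spaces. With $C_1^{(n)}=\{(x,y)\in[0,1]\times[0,1):|x-y|\le 1/n\}$ and $C_2$ the diagonal, one gets $\mathsf{GH}_2(\mathcal{X},\mathcal{Y})=0$; but any tuple realizing the value $0$ would contain $C_1\in\mathcal{C}([0,1],[0,1))$ with $\dis(C_1)=0$, i.e.\ the graph of a surjective distance-preserving map $[0,1]\to[0,1)$, which is impossible since the continuous image of a compact set is compact. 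So the lemma is false without assuming the label sets closed: your parenthetical that label sets ``may be assumed compact with no loss'' is not a harmless normalization but a necessary hypothesis, one the paper leaves implicit (the same example also defeats the $\mathsf{GH}_k=0\Rightarrow k$-isomorphic direction of Theorem \ref{thm:k_GH}, whose proof relies on this lemma). Once closedness of the $X_i$ and $Y_i$ is added as a standing assumption, your argument is complete and correct.
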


\begin{proof}[Proof of Theorem \ref{thm:k_GH}]
    Symmetry and non-negativity are obvious, and the triangle inequality essentially follows by  standard arguments (see \cite{burago2001course} for the standard Gromov-Hausdorff setting and \cite{needham2024stability} for a variant of Gromov-Hausdorff distance which uses a distortion similar to that of Definition \ref{def:generalized_distortion}). 
    
    It remains to show that $\mathsf{GH}_k(\mathcal{X},\mathcal{Y}) = 0$ if and only if $\mathcal{X}$ and $\mathcal{Y}$ are $k$-isomorphic, which requires some new arguments. If $\phi:X \to Y$ is a $k$-isomorphism, then one obtains tuples $(\phi_i:X_i \to Y_i)_i$ and $(\psi_i = \phi_i^{-1}:Y_i \to X_i)_i$ by restriction, and it is easy to show that
    \begin{equation}\label{eqn:distortions_zero}
   \dis(\phi_i,\phi_j) =\dis(\psi_i,\psi_j) = \codis(\phi_i,\psi_j) = 0
    \end{equation}
    for all $i,j = 1,\ldots,k$, so that $\mathsf{GH}_k(\mathcal{X},\mathcal{Y}) = 0$, by Proposition \ref{prop:equality_of_GH}. 
    
    Conversely, suppose that $\mathsf{GH}_k(\mathcal{X},\mathcal{Y}) = 0$. Lemma \ref{lem:optimal_correspondences}, implies the existence of  correspondences $C_1,\ldots,C_k$ such that $\dis(C_i,C_j) = 0$ for all $i,j$. Following the proof of Proposition \ref{prop:equality_of_GH}, one can then construct maps $\phi_i:X_i \to Y_i$ and $\psi_i:Y_i \to X_i$ such that \eqref{eqn:distortions_zero} holds. Define disjoint sets $X_i'$ recursively by
    \[
    X_1' = X_1, \; X_2' = X_2 \setminus X_1', \; X_3' = X_3 \setminus (X_1' \cup X_2'), \ldots;
    \]
    some of these sets may be empty, but the collection covers $X$. We then define a map $\phi:X \to Y$ by 
    \[
    \phi(x) = \phi_i(x) \Leftrightarrow x \in X_i'.
    \]
    This is a well defined map, as the collection $X_1', \ldots, X_k'$ covers $X$, and the conditions \eqref{eqn:distortions_zero} imply that it satisfies
    \[
    d_X(x,x') = d_Y(\phi(x),\phi(x'))
    \]
    for all $x,x' \in X$. Moreover, we claim that $\phi(X_i) = Y_i$, hence that $\phi$ is surjective (so it is an isometry), which proves that it is a $k$-isomorphism. 
    To see the claim, first observe that if $x \in X_i \cap X_j$, then $\phi_i(x) = \phi_j(x)$; indeed, 
    \[
    0 = \dis(\phi_i,\phi_j) \geq |d_X(x,x) - d_Y(\phi_i(x),\phi_j(x))| = d_Y(\phi_i(x),\phi_j(x)).
    \]
    To show that $\phi(X_i) \subset Y_i$,  suppose that $x \in X_i$ and consider the two cases:
    \begin{enumerate}
        \item if $x \in X_i'$, then $\phi(x) = \phi_i(x) \in Y_i$;
        \item if $x \not \in X_i'$, then $x \in X_i \cap X_j'$ for some $j < i$. In particular, $x \in X_j'$ and $x \in X_i \cap X_j$, so
        \[
        \phi(x) = \phi_j(x) = \phi_i(x) \in Y_i.
        \]
    \end{enumerate}
    Finally, we show that $Y_i \subset \phi(X_i)$. Note that $\codis(\phi_i,\psi_i) = 0$ implies that $\phi_i$ and $\psi_i$ are inverse maps. Let $y \in Y_i$ and set $x = \psi_i(y) \in X_i$. Considering the same two cases above, one can show that $\phi(x) = y$, proving the claim.
\end{proof}

\begin{figure}
    \centering
    \includegraphics[width=0.85\linewidth]{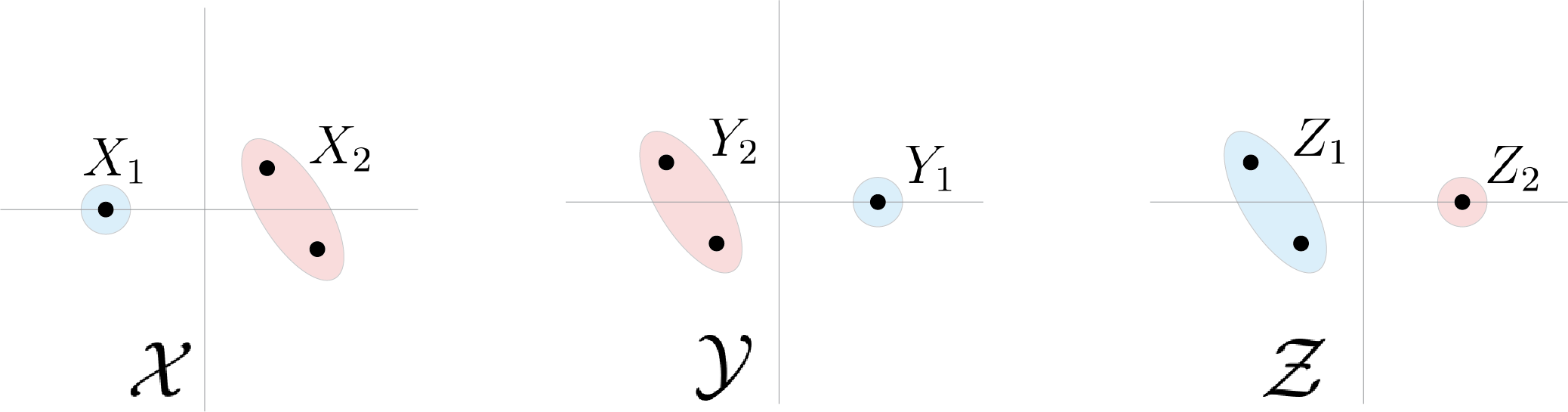}
    \caption{Simple examples of labeled datasets. Each metric space is a three-point subset of the Euclidean plane, with labels indicated by colors. We have $\mathsf{GH}_2(\mathcal{X},\mathcal{Y}) = 0$, as the underlying metric spaces are isometric, via an isometry which preserves label structure. On the other hand,  $\mathsf{GH}_2(\mathcal{X},\mathcal{Z}) > 0$, as, although the underlying metric spaces are still isometric, there is no isometry which preserves labels. Passing to GH distance for permuted labels, we have $\mathsf{GH}_{S_2}(\mathcal{X},\mathcal{Z}) = 0$ (realized by the nontrivial permutation in $S_2$).}
    \label{fig:Isomorphism_examples}
\end{figure}

\subsection{Gromov-Hausdorff for Permuted Labels}\label{subsec:GH_permuted}

The symmetric group on $k$ letters $S_k$ acts on the space of $k$-labeled metric spaces by permuting the labels: for a $k$-labeled metric space $\mathcal{X} = (X,d_X,L_X)$ and $\sigma \in S_k$, $\sigma \cdot X$ is the $k$-labeled metric space $(X,d_X,\sigma \cdot L_X)$, with 
\begin{equation}\label{eqn:symmetric_action}
\sigma \cdot L_X = (X_{\sigma(1)},\ldots,X_{\sigma(k)}).
\end{equation}
It is not hard to show that this action is by isometries with respect to $\mathsf{GH}_k$, so that the metric descends to a well-defined distance on the quotient. 

\begin{definition}[GH Modulo Relabeling]
    The \define{$k$-labeled Gromov-Hausdorff distance modulo relabeling} between $k$-labeled metric spaces $\mathcal{X}$ and $\mathcal{Y}$ is 
    \[
    \mathsf{GH}_{S_k}(\mathcal{X},\mathcal{Y}) \coloneqq \min_{\sigma \in S_k} \mathsf{GH}_k(\sigma \cdot \mathcal{X}, \mathcal{Y}).
    \]
\end{definition} 

The distance $\mathsf{GH}_{S_k}$ is appropriate in the setting that the labels for $\mathcal{X}$ and $\mathcal{Y}$ are unregistered---optimizing over $S_k$ amounts to finding the best registration between the labels (see Example \ref{ex:labeled_metric_spaces}). From the work above, we immediately obtain the following conclusions about the structure of $\mathsf{GH}_{S_k}$.

\begin{corollary}\label{cor:GH_mod_perm}
The $k$-labeled Gromov-Hausdorff distance modulo relabeling:
    \begin{enumerate}
        \item can be expressed as 
        \[
        \mathsf{GH}_{S_k}(\mathcal{X},\mathcal{Y}) = \min_{\sigma \in S_k} \widetilde{\mathsf{GH}}_k(\sigma \cdot \mathcal{X},\mathcal{Y});
        \]
        \item defines a metric on the space of $k$-labeled metric spaces, considered up to $k$-isomorphism and the action of the symmetric group. That is, $\mathsf{GH}_{S_k}(\mathcal{X},\mathcal{Y})= 0$ if and only if there is $\sigma \in S_k$ such that $\sigma \cdot \mathcal{X}$ and $\mathcal{Y}$ are $k$-isomorphic. 
    \end{enumerate}
\end{corollary}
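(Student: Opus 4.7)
The plan is to deduce part (1) directly from Proposition \ref{prop:equality_of_GH}: since $\mathsf{GH}_k(\sigma \cdot \mathcal{X}, \mathcal{Y}) = \widetilde{\mathsf{GH}}_k(\sigma \cdot \mathcal{X}, \mathcal{Y})$ for every fixed $\sigma \in S_k$, taking the minimum over the finite set $S_k$ preserves the equality. This can be dispensed with in one line.

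For part (2), the essential preparatory step---asserted but not justified in the paragraph preceding the corollary---is that the $S_k$-action on $k$-labeled metric spaces is by isometries of $\mathsf{GH}_k$. I would verify this directly: given a tuple of correspondences $(C_1, \ldots, C_k)$ with $C_i \in \mathcal{C}(X_i, Y_i)$, the reindexed tuple $(C_{\sigma(1)}, \ldots, C_{\sigma(k)})$ is a valid tuple of correspondences between $\sigma \cdot \mathcal{X}$ and $\sigma \cdot \mathcal{Y}$, and because the maximum over all index pairs in \eqref{eqn:k_labeled_GH} is invariant under a simultaneous permutation of indices, the infima defining $\mathsf{GH}_k(\sigma \cdot \mathcal{X}, \sigma \cdot \mathcal{Y})$ and $\mathsf{GH}_k(\mathcal{X},\mathcal{Y})$ coincide. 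Symmetry and non-negativity of $\mathsf{GH}_{S_k}$ then follow from the corresponding properties of $\mathsf{GH}_k$ via the substitution $\tau = \sigma^{-1}$. For the triangle inequality, given $\mathcal{X}, \mathcal{Y}, \mathcal{Z}$, I would select minimizers $\sigma_1, \sigma_2 \in S_k$ for $(\mathcal{X},\mathcal{Y})$ and $(\mathcal{Y},\mathcal{Z})$ (using finiteness of $S_k$), apply $\sigma_2$ to both arguments of the first term (by the isometric invariance just established) to rewrite it as $\mathsf{GH}_k(\sigma_2 \cdot \sigma_1 \cdot \mathcal{X}, \sigma_2 \cdot \mathcal{Y})$, and then invoke the triangle inequality for $\mathsf{GH}_k$ from Theorem \ref{thm:k_GH}; the resulting bound has the form $\mathsf{GH}_k(\tau \cdot \mathcal{X}, \mathcal{Z})$ for some $\tau \in S_k$, which dominates $\mathsf{GH}_{S_k}(\mathcal{X},\mathcal{Z})$.

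For the vanishing characterization, finiteness of $S_k$ ensures the minimum defining $\mathsf{GH}_{S_k}(\mathcal{X},\mathcal{Y})$ is attained by some $\sigma$, so $\mathsf{GH}_{S_k}(\mathcal{X},\mathcal{Y}) = 0$ forces $\mathsf{GH}_k(\sigma \cdot \mathcal{X}, \mathcal{Y}) = 0$, and Theorem \ref{thm:k_GH} supplies a $k$-isomorphism $\sigma \cdot \mathcal{X} \to \mathcal{Y}$; the converse is immediate from Theorem \ref{thm:k_GH}, since a $k$-isomorphism $\sigma \cdot \mathcal{X} \to \mathcal{Y}$ forces $\mathsf{GH}_k(\sigma \cdot \mathcal{X}, \mathcal{Y}) = 0$ and hence $\mathsf{GH}_{S_k}(\mathcal{X},\mathcal{Y}) = 0$. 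The main obstacle is purely bookkeeping: carefully tracking how the permutation commutes past the indexed tuple of correspondences when verifying isometric invariance, and keeping track of which composition of group elements appears in the triangle inequality step (the action $\sigma \cdot L_X = (X_{\sigma(1)}, \ldots, X_{\sigma(k)})$ satisfies $\sigma_2 \cdot (\sigma_1 \cdot L_X) = (\sigma_1 \sigma_2) \cdot L_X$, so it is a right action, but this detail does not affect the proof since only membership in $S_k$ of the relevant product is used). Once the isometric invariance is secured, everything else is a routine repackaging of Theorem \ref{thm:k_GH}.
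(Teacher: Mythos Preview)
Your proposal is correct and matches the paper's approach exactly: the paper does not give an explicit proof of this corollary, simply declaring that it follows ``from the work above'' (i.e., Proposition~\ref{prop:equality_of_GH}, Theorem~\ref{thm:k_GH}, and the asserted-but-unproved fact that the $S_k$-action is by $\mathsf{GH}_k$-isometries). You are in fact more thorough than the paper, since you supply the verification of isometric invariance and spell out the triangle inequality and vanishing characterization that the paper leaves implicit; your observation about the right-action convention is a nice bookkeeping check that the paper does not address.
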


\subsection{Gromov-Hausdorff for Unregistered Labels}\label{subsec:GH_unregistered}

It remains to consider the case of labeled metric spaces $\mathcal{X}$ and $\mathcal{Y}$ which are not only unregistered, but for which the labels are of potentially unequal size, so that the symmetric group no longer appropriately captures the notion of registration (see Example \ref{ex:labeled_metric_spaces}). The basic idea for adapting Gromov-Hausdorff distance to this setting is straightforward: we allow a label in $\mathcal{X}$ to be registered to multiple labels in $\mathcal{Y}$, and vice-versa; this is implemented by adding additional copies of labels as necessary to facilitate this registration. To formalize this idea, we introduce the following method for modifying a labeled metric space. Below, and throughout the rest of the paper, we use the following notation, for any positive integer $k$:
\[
[k] \coloneqq \{1,\ldots,k\}.
\]

\begin{definition}[Stabilization]\label{def:stabilization}
    Let $\mathcal{X} = (X,d_X,L_X=(X_1,\ldots,X_k))$ be a $k$-labeled metric space and let $q$ be an integer with $q \geq k$. A \define{$q$-stabilization} of $\mathcal{X}$ is a labeled metric space $\widehat{\mathcal{X}} = (X,d_X,\widehat{L}_X)$, where $\widehat{L}_X$ is of the form
    \[
    \widehat{L}_X =  (\widehat{X}_1,\ldots,\widehat{X}_q),
    \]
    such that there exists a surjective map $\rho:[q] \to [k]$ satisfying 
    \[
    \rho(i) = j \Leftrightarrow \widehat{X}_i = X_j.
    \]
    When we wish to supress reference to a specific $q$, we refer to $\widehat{\mathcal{X}}$ simply as a  \define{stabilization} of $\mathcal{X}$. 
\end{definition}

\begin{remark}\label{rem:comments_on_stabilization}
    Said more plainly, a stabilization is constructed by adding identical copies of each of the label sets (the number of copies of $X_j$ is given by the cardinality of $\rho^{-1}(j)$) and permuting the result. Observe that if $\mathcal{X}$ is a $k$-labeled metric space and $\sigma \in S_k$, then $\sigma \cdot \mathcal{X}$ is a $k$-stabilization of $\mathcal{X}$. 
\end{remark}

 The notion of stabilization allows us to define our most general notion of Gromov-Hausdorff distance for labeled metric spaces.

\begin{definition}[GH Modulo Stabilization]
    The \define{labeled Gromov-Hausdorff distance modulo stabilization} between labeled metric spaces $\mathcal{X}$ and $\mathcal{Y}$ (possibly with labels of unequal size) is 
    \begin{equation}\label{eqn:GH_stab}
    \mathsf{GH}_{st}(\mathcal{X},\mathcal{Y}) \coloneqq \inf_{\widehat{\mathcal{X}},\widehat{\mathcal{Y}}} \mathsf{GH}_{q}(\widehat{\mathcal{X}}, \widehat{\mathcal{Y}}),
    \end{equation}
    where the infimum is over the choice of $q$ and of $q$-stabilizations  $\widehat{\mathcal{X}}$ and $\widehat{\mathcal{Y}}$.
\end{definition}

We describe the metric properties of $\mathsf{GH}_{st}$ below, but first we record the following easy comparison result.

\begin{prop}
    For $k$-labeled metric spaces $\mathcal{X}$ and $\mathcal{Y}$, 
    \[
    \mathsf{GH}_{st}(\mathcal{X},\mathcal{Y}) \leq \mathsf{GH}_{S_k}(\mathcal{X},\mathcal{Y}) \leq \mathsf{GH}_{k}(\mathcal{X},\mathcal{Y}).
    \]
\end{prop}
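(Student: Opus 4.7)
The plan is to verify each inequality by unwinding definitions; both reduce to observing that the larger-indexed Gromov-Hausdorff distance is defined as an infimum over a strictly larger family of objects than the smaller-indexed one.

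For the second inequality $\mathsf{GH}_{S_k}(\mathcal{X},\mathcal{Y}) \leq \mathsf{GH}_{k}(\mathcal{X},\mathcal{Y})$, I would simply note that the identity permutation $e \in S_k$ satisfies $e \cdot \mathcal{X} = \mathcal{X}$, so
\[
\mathsf{GH}_{S_k}(\mathcal{X},\mathcal{Y}) = \min_{\sigma \in S_k} \mathsf{GH}_k(\sigma \cdot \mathcal{X},\mathcal{Y}) \leq \mathsf{GH}_k(e \cdot \mathcal{X}, \mathcal{Y}) = \mathsf{GH}_k(\mathcal{X},\mathcal{Y}).
\]

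For the first inequality $\mathsf{GH}_{st}(\mathcal{X},\mathcal{Y}) \leq \mathsf{GH}_{S_k}(\mathcal{X},\mathcal{Y})$, the key point is Remark \ref{rem:comments_on_stabilization}: for any $\sigma \in S_k$, the labeled metric space $\sigma \cdot \mathcal{X}$ is a $k$-stabilization of $\mathcal{X}$ (corresponding to the bijection $\rho = \sigma^{-1} : [k] \to [k]$, which is trivially surjective). Likewise, $\mathcal{Y}$ is itself a $k$-stabilization of $\mathcal{Y}$ via the identity bijection $\rho = \mathrm{id}$. Hence the pair $(\sigma \cdot \mathcal{X}, \mathcal{Y})$ is admissible in the infimum \eqref{eqn:GH_stab} defining $\mathsf{GH}_{st}$ with $q = k$, and so
\[
\mathsf{GH}_{st}(\mathcal{X},\mathcal{Y}) \leq \mathsf{GH}_k(\sigma \cdot \mathcal{X}, \mathcal{Y})
\]
for every $\sigma \in S_k$. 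Taking the minimum over $\sigma \in S_k$ on the right gives $\mathsf{GH}_{st}(\mathcal{X},\mathcal{Y}) \leq \mathsf{GH}_{S_k}(\mathcal{X},\mathcal{Y})$.

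There is no real obstacle here --- both bounds are entirely formal, following because each distance in the chain is built by taking an infimum over a family that manifestly contains the family used to define the preceding one. The only mild subtlety worth flagging is that one should record that $k$-stabilizations of a $k$-labeled metric space are exactly the permutations of its label sequence, which is why the $q = k$ layer of the $\mathsf{GH}_{st}$ infimum already recovers all of $\mathsf{GH}_{S_k}$.
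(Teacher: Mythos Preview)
Your proof is correct and follows exactly the same approach as the paper: the second inequality is immediate from the definition (via the identity permutation), and the first follows from Remark~\ref{rem:comments_on_stabilization} by recognizing $\sigma \cdot \mathcal{X}$ as a $k$-stabilization. One cosmetic point: with $\sigma \cdot L_X = (X_{\sigma(1)},\ldots,X_{\sigma(k)})$, the surjection in Definition~\ref{def:stabilization} is $\rho = \sigma$ rather than $\sigma^{-1}$, but this does not affect the argument.
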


Note that the simple examples given in Figures \ref{fig:Isomorphism_examples} and \ref{fig:isomorphism_examples2} show that these inequalities are not, in general, equalities. 

\begin{proof}
    It follows immediately by their definitions that $\mathsf{GH}_{S_k} \leq \mathsf{GH}_{k}$. By Remark \ref{rem:comments_on_stabilization}, a permutation action on a $k$-labeled metric space can be viewed as a stabilization, so that $\mathsf{GH}_{st} \leq \mathsf{GH}_{S_k}$.
\end{proof}

In order to describe the metric structure of $\mathsf{GH}_{st}$, we give two alternative formulations.  For a $k$-labeled metric space $\mathcal{X}$ and an $\ell$-labeled metric space $\mathcal{Y}$, consider the following two variants:
\begin{enumerate}
    \item A \define{correspondence version} of $\mathsf{GH}_{st}$ is defined by 
    \begin{equation}\label{eqn:GH_stab_correspondence}
\overline{\mathsf{GH}}_{st}(\mathcal{X},\mathcal{Y}) \coloneqq \frac{1}{2} \inf_{D \in \mathcal{C}([k],[\ell])} \inf_{C_D} \max_{(i,j),(i',j') \in D} \dis(C_{i,j},C_{i',j'}),
\end{equation}
where, given $D \in \mathcal{C}([k],[\ell])$, $C_D$ is a tuple of correspondences of the form
\[
C_D  = (C_{i,j} \in \mathcal{C}(X_i,Y_j))_{(i,j) \in D}.
\]
\item A \define{mapping version} of $\mathsf{GH}_{st}$ is defined by 
\begin{equation}\label{eqn:GH_stab_mapping}
    \widetilde{\mathsf{GH}}_{st}(\mathcal{X},\mathcal{Y})\coloneqq \frac{1}{2}\inf_{D\in \mathcal{C}([k],[\ell])}\inf_{\phi_{D},\psi_{D}} \max_{(i,j),(i',j') \in D} \{\dis(\phi_{i,j}, \phi_{i',j'}), \dis(\psi_{j,i}, \psi_{j',i'}), \codis(\phi_{i,j}, \psi_{j,i})\},
\end{equation}
where, given $D \in \mathcal{C}([k],[\ell])$, $\phi_D$ and $\psi_D$ are tuples of functions of the form 
\[
\phi_D=(\phi_{i,j}\colon  X_i \to Y_j)_{(i,j)\in D} \quad \mbox{and} \quad \psi_D=(\psi_{j,i} \colon Y_j \to X_i)_{(i,j)\in D}.
\]
\end{enumerate}

We then have the following equivalences.

\begin{prop}\label{prop:GH_equivalences}
    The Gromov-Hausdorff distances given in \eqref{eqn:GH_stab}, \eqref{eqn:GH_stab_correspondence} and \eqref{eqn:GH_stab_mapping} are equivalent; i.e., 
    \[
    \mathsf{GH}_{st}(\mathcal{X},\mathcal{Y}) = \overline{\mathsf{GH}}_{st}(\mathcal{X},\mathcal{Y}) = \widetilde{\mathsf{GH}}_{st}(\mathcal{X},\mathcal{Y})
    \]
    for any labeled metric spaces $\mathcal{X}$ and $\mathcal{Y}$. 
\end{prop}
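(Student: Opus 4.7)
The plan is to prove the chain $\mathsf{GH}_{st} = \overline{\mathsf{GH}}_{st} = \widetilde{\mathsf{GH}}_{st}$ in two stages. The equality $\overline{\mathsf{GH}}_{st} = \widetilde{\mathsf{GH}}_{st}$ follows by applying the argument of Proposition~\ref{prop:equality_of_GH} uniformly over the label pairs indexed by $D$: given $C_D = (C_{i,j})_{(i,j) \in D}$, the Axiom of Choice produces maps $\phi_{i,j} \colon X_i \to Y_j$ and $\psi_{j,i} \colon Y_j \to X_i$ whose graphs are contained in $C_{i,j}$, and the standard bound $\dis(C_{i,j}, C_{i',j'}) \geq \max\{\dis(\phi_{i,j}, \phi_{i',j'}), \dis(\psi_{j,i}, \psi_{j',i'}), \codis(\phi_{i,j}, \psi_{j,i})\}$ is derived exactly as in the proof of Proposition~\ref{prop:equality_of_GH}. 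The reverse direction is handled by setting $C_{i,j}$ to be the union of the graphs of $\phi_{i,j}$ and $\psi_{j,i}$ and performing the usual case analysis on which graph each coordinate of a pair in $C_{i,j} \times C_{i',j'}$ comes from.

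The substantive content lies in the equality $\mathsf{GH}_{st} = \overline{\mathsf{GH}}_{st}$, which rests on a dictionary between pairs of $q$-stabilizations and correspondences $D \in \mathcal{C}([k], [\ell])$. An enumerated correspondence $D = \{(i_s, j_s)\}_{s = 1}^q$ yields $q$-stabilizations $\widehat{\mathcal{X}}$ and $\widehat{\mathcal{Y}}$ via the surjections $\rho_X(s) := i_s$ and $\rho_Y(s) := j_s$; conversely, any pair of $q$-stabilizations with surjections $\rho_X, \rho_Y$ determines the correspondence $D := \{(\rho_X(s), \rho_Y(s)) : s \in [q]\} \in \mathcal{C}([k], [\ell])$, which is genuinely a correspondence because $\rho_X, \rho_Y$ are surjective.

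For the inequality $\mathsf{GH}_{st} \leq \overline{\mathsf{GH}}_{st}$, given $(D, C_D)$ I would build stabilizations of size $q = |D|$ as above and set $\widehat{C}_s := C_{i_s, j_s}$; the maximum over $s, s' \in [q]$ of $\dis(\widehat{C}_s, \widehat{C}_{s'})$ then agrees with the maximum over $(i,j), (i',j') \in D$ of $\dis(C_{i,j}, C_{i',j'})$. For the reverse inequality, given stabilizations with correspondences $(\widehat{C}_s)_{s \in [q]}$, form $D$ as above and pick, for each $(i,j) \in D$, any representative index $s(i,j)$ with $(\rho_X(s(i,j)), \rho_Y(s(i,j))) = (i,j)$, then set $C_{i,j} := \widehat{C}_{s(i,j)}$; the max over $(i,j), (i',j') \in D$ is then a max over a subfamily of the terms in $\max_{s, s' \in [q]} \dis(\widehat{C}_s, \widehat{C}_{s'})$, hence bounded above by it.

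The one subtlety worth flagging is that a stabilization may contain several indices $s \neq s'$ with $(\rho_X(s), \rho_Y(s)) = (\rho_X(s'), \rho_Y(s')) = (i,j)$ carrying genuinely different correspondences $\widehat{C}_s, \widehat{C}_{s'} \in \mathcal{C}(X_i, Y_j)$. Passing to $\overline{\mathsf{GH}}_{st}$ discards all but one such representative per pair, but since this only drops terms from a maximum, the resulting inequality goes in the direction needed and no canonical choice of representative is required; the potentially tighter bound from duplication is recovered in the other direction by the freedom to let $q$ exceed $|D|$ trivially.
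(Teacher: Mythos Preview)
Your proposal is correct and follows essentially the same route as the paper's proof: the equality $\overline{\mathsf{GH}}_{st} = \widetilde{\mathsf{GH}}_{st}$ is dispatched by the Kalton--Lancien argument of Proposition~\ref{prop:equality_of_GH}, and the equality $\mathsf{GH}_{st} = \overline{\mathsf{GH}}_{st}$ is proved via the same dictionary between correspondences $D \in \mathcal{C}([k],[\ell])$ and pairs of $q$-stabilizations (with $q = |D|$ in one direction and $D = \{(\rho_X(s),\rho_Y(s)) : s \in [q]\}$ in the other). Your remark on the non-injectivity of $s \mapsto (\rho_X(s),\rho_Y(s))$ is a point the paper leaves implicit, and your observation that dropping duplicates only shrinks the index set of a maximum handles it cleanly.
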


\begin{proof}
    The proof that $\overline{\mathsf{GH}}_{st}(\mathcal{X},\mathcal{Y}) = \widetilde{\mathsf{GH}}_{st}(\mathcal{X},\mathcal{Y})$ follows by an argument which is very similar to the proof of Proposition \ref{prop:equality_of_GH} (which, in turn, follows \cite{kalton1999distances}), so we focus on $\mathsf{GH}_{st}(\mathcal{X},\mathcal{Y}) = \overline{\mathsf{GH}}_{st}(\mathcal{X},\mathcal{Y})$, which requires a more novel proof. 

    To show that $\mathsf{GH}_{st}(\mathcal{X},\mathcal{Y}) \leq \overline{\mathsf{GH}}_{st}(\mathcal{X},\mathcal{Y})$, we show that, given $D \in \mathcal{C}([k],[\ell])$, and a tuple of correspondences 
    \[
    C_D = (C_{i,j} \in \mathcal{C}(X_i,X_j))_{(i,j) \in D},
    \]
    we can construct $q$-stabilizations (for some choice of $q$ to be determined) $\widehat{\mathcal{X}}$ and $\widehat{\mathcal{Y}}$ such that 
 
\begin{equation}\label{eqn:GH_cost_bound}
    \inf_{C_i \in \mathcal{C}(\widehat{X}_i,\widehat{Y}_i)} \max_{i,j = 1,\ldots,q} \dis(C_i,C_j) \leq \max_{(i,j),(i',j') \in D} \dis(C_{i,j},C_{i',j'}).
    \end{equation}
    This is conceptually straightforward, but requires some bookkeeping. To this end, for each $(i,j) \in D$, we make copies
    \[
    X_{i,j} = X_i \quad \mbox \quad Y_{i,j} = Y_j.
    \]
    Letting $q = |D|$, we choose an (arbitrary) ordering 
    of the elements of $D$ as 
    \[
    \big( (i_1,j_1),(i_2,j_2),\ldots,(i_q,j_q) \big)
    \]
    and reindex our copies as 
    \[
    \widehat{X}_m = X_{i_m,j_m} \quad \mbox{and} \quad \widehat{Y}_m = Y_{i_m,j_m},
    \]
    so that the labels in our stabilizations are defined to be 
    \[
    \widehat{L}_X = (\widehat{X}_1,\ldots,\widehat{X}_q) \quad \mbox{and} \quad \widehat{L}_Y = (\widehat{Y}_1,\ldots,\widehat{Y}_q).
    \]
    To establish the estimate \eqref{eqn:GH_cost_bound}, for each $m = 1,\ldots,q$, we define a correspondence $C_m \in \mathcal{C}(\widehat{X}_m,\widehat{Y}_m)$ by setting
    \[
    C_m \coloneqq C_{i_m,j_m} \in \mathcal{C}(X_{i_m},Y_{j_m}) = \mathcal{C}(X_{i_m,j_m},Y_{i_m,j_m}) = \mathcal{C}(\widehat{X}_m,\widehat{Y}_m).
    \]
    Then
    \[
    \max_{m,n} \dis(C_m,C_n) = \max_{(i_m,j_m),(i_n,j_n) \in D} \dis(C_{i_m,j_m},C_{i_n,j_n}),
    \]
    which proves the desired inequality. 
    
    The proof that $\mathsf{GH}_{st}(\mathcal{X},\mathcal{Y}) \geq \overline{\mathsf{GH}}_{st}(\mathcal{X},\mathcal{Y})$ is similar: we show that, given $q$-stabilizations $\widehat{\mathcal{X}}$ and $\widehat{\mathcal{Y}}$, and correspondences $C_i \in \mathcal{C}(\widehat{X}_i,\widehat{Y}_i)$ for $i \in \{1,\ldots,q\}$, we can construct $D \in \mathcal{C}([k],[\ell])$ such that
    \begin{equation}\label{eqn:GH_cost_bound2}
     \max_{i,j = 1,\ldots,q} \dis(C_i,C_j) \geq \inf_{C_D} \max_{(i,j),(i',j') \in D} \dis(C_{i,j},C_{i',j'}).
    \end{equation}
    The construction is done essentially by running the previous construction in the reverse order, so we only sketch the ideas. Letting $\rho_X:[q] \to [k]$ and $\rho_Y:[q] \to [\ell]$ be the respective surjective maps from Definition \ref{def:stabilization}, we define $D$ by
    \[
    D = \{(i,j) \in [k] \times [\ell] \mid \exists \, m \in [q] \mbox{ such that } \rho_X(m) = i \mbox{ and } \rho_Y(m) = j\}.
    \]
    For each $(i,j) \in D$, we set $C_{i,j} = C_m$, where $m$ satisfies $\rho_X(m) = i$ and $\rho_Y(m) = j$. This defines a collection $C_D$ which can then use to establish \eqref{eqn:GH_cost_bound2}.
\end{proof}

    This equivalence gives the following useful corollary.

    \begin{corollary}\label{cor:GH_stab_realized}
        The infimum in \eqref{eqn:GH_stab} is achieved by some stabilizations $\widehat{\mathcal{X}}$ and $\widehat{\mathcal{Y}}$. In particular, if $\mathcal{X}$ is a $k$-labeled metric space and $\mathcal{Y}$ is an $\ell$-labeled metric space, then there is a minimizing $q$-stabilization with  $q \leq k\cdot \ell$. 
    \end{corollary}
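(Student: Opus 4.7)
The plan is to leverage the correspondence formulation $\overline{\mathsf{GH}}_{st}$ from Proposition \ref{prop:GH_equivalences}. This reformulation makes both the attainment of the infimum and the size bound $q \leq k\ell$ essentially transparent, at which point the explicit construction in the proof of Proposition \ref{prop:GH_equivalences} can be reused to produce the desired stabilizations.

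First, I would note that in \eqref{eqn:GH_stab_correspondence} the outer index $D$ ranges over $\mathcal{C}([k],[\ell])$, which is a \emph{finite} set (a subset of the power set of $[k]\times[\ell]$, so with at most $2^{k\ell}$ elements). Consequently, the outer infimum over $D$ is automatically a minimum, and it suffices to show that, for each fixed $D$, the inner infimum over tuples $C_D = (C_{i,j})_{(i,j)\in D}$ is attained.

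For fixed $D$, the cleanest route is to invoke Lemma \ref{lem:optimal_correspondences} after an auxiliary reduction. Using the construction in the forward direction of the proof of Proposition \ref{prop:GH_equivalences}, set $q = |D|$, enumerate $D = \{(i_1,j_1),\ldots,(i_q,j_q)\}$, and form $q$-stabilizations $\widehat{\mathcal{X}}$ and $\widehat{\mathcal{Y}}$ with $\widehat{X}_m = X_{i_m}$ and $\widehat{Y}_m = Y_{j_m}$. The bijection between tuples $C_D$ and tuples of correspondences $(C_1,\ldots,C_q)$ with $C_m \in \mathcal{C}(\widehat{X}_m,\widehat{Y}_m)$ established there preserves the max-of-distortions cost exactly. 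Therefore Lemma \ref{lem:optimal_correspondences}, applied to $\mathsf{GH}_q(\widehat{\mathcal{X}},\widehat{\mathcal{Y}})$, yields a minimizing tuple $C_D^\ast$ for this fixed $D$. Alternatively, one can verify directly that $\prod_{(i,j)\in D}\mathcal{C}(X_i,Y_j)$ is compact in the product Hausdorff topology on the compact sets $X_i\times Y_j$, and that the max-of-distortions cost is lower semicontinuous; this is the essential content underlying Lemma \ref{lem:optimal_correspondences}, and is the only place any real analytic input enters.

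Finally, having secured minimizers $D^\ast \in \mathcal{C}([k],[\ell])$ and $C_{D^\ast}^\ast$ for $\overline{\mathsf{GH}}_{st}$, the remainder is purely combinatorial: reapplying the forward construction from Proposition \ref{prop:GH_equivalences} with $q = |D^\ast|$ produces stabilizations $\widehat{\mathcal{X}}^\ast,\widehat{\mathcal{Y}}^\ast$ together with correspondences $(C_1,\ldots,C_q)$ satisfying
\[
\max_{m,n}\dis(C_m,C_n) \;=\; \max_{(i,j),(i',j')\in D^\ast}\dis(C^\ast_{i,j},C^\ast_{i',j'}) \;=\; 2\,\mathsf{GH}_{st}(\mathcal{X},\mathcal{Y}),
\]
where the last equality uses Proposition \ref{prop:GH_equivalences}. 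Thus $(\widehat{\mathcal{X}}^\ast,\widehat{\mathcal{Y}}^\ast)$ realizes the infimum in \eqref{eqn:GH_stab}, and the bound $q = |D^\ast| \leq k\ell$ is automatic from $D^\ast \subseteq [k]\times[\ell]$. The main (and only) obstacle is the compactness/lower semicontinuity step used to attain the inner infimum for fixed $D$; once that has been handled by quoting Lemma \ref{lem:optimal_correspondences}, every other step is a bookkeeping exercise already performed inside the proof of Proposition \ref{prop:GH_equivalences}.
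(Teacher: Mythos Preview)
Your proposal is correct and follows essentially the same approach as the paper: use the correspondence formulation $\overline{\mathsf{GH}}_{st}$, observe that $\mathcal{C}([k],[\ell])$ is finite so the outer infimum is a minimum, appeal to the compactness argument underlying Lemma \ref{lem:optimal_correspondences} for the inner infimum, and then invoke the construction from the proof of Proposition \ref{prop:GH_equivalences} to obtain stabilizations with $q = |D^\ast| \leq k\ell$. The only cosmetic difference is that the paper cites the \emph{reasoning} of Lemma \ref{lem:optimal_correspondences} directly for the inner infimum, whereas you first translate to a $\mathsf{GH}_q$ problem and then invoke the lemma itself---both routes are equivalent.
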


    \begin{proof}
        By the same reasoning as that used in the proof of Lemma \ref{lem:optimal_correspondences}, for any choice of $D$, 
        \[
        \inf_{C_D} \max_{(i,j),(i',j') \in D} \dis(C_{i,j},C_{i',j'})
        \]
        is realized by a collection of correspondences $C_D$. Since the set $\mathcal{C}([k],[\ell])$ is finite, \eqref{eqn:GH_stab_correspondence} must be realized by some correspondence $D$. By the proof of Proposition \ref{prop:GH_equivalences}, this minimizing $D$ can be used to construct minimizing stabilizations whose labels have cardinality at most $|D| \leq k \cdot \ell$.  
    \end{proof}

    We are then able to define an appropriate notion of equivalence between labeled metric spaces with labels of arbitrary cardinality, and to then describe the metric properties of $\mathsf{GH}_{st}$. 

\begin{figure}
        \centering
        \includegraphics[width=0.95\linewidth]{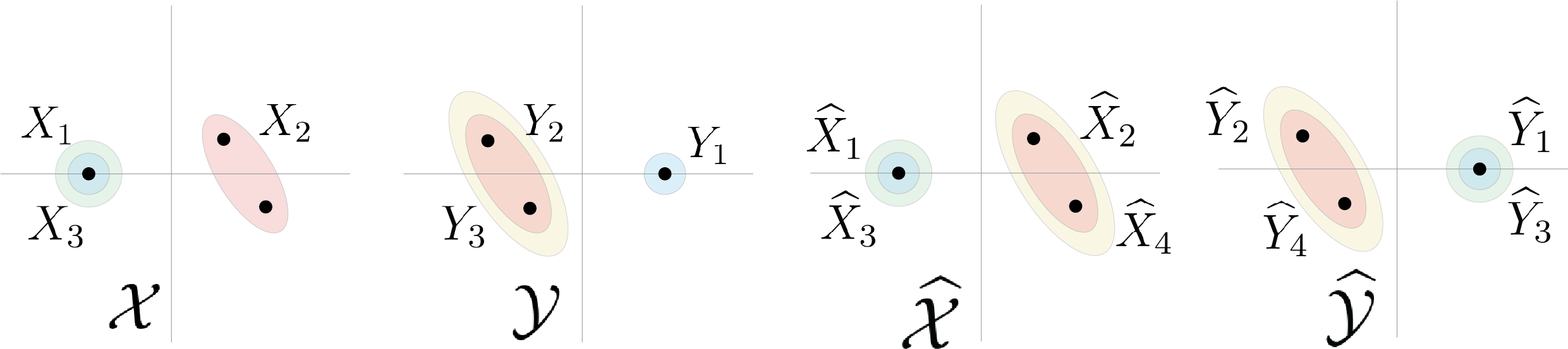}
        \caption{Examples of stabilization. The labeled metric spaces $\mathcal{X}$ and $\mathcal{Y}$ have repeated labels $X_1 = X_3$ and $Y_2 = Y_3$. These repeated labels  can't be perfectly matched between $\mathcal{X}$ and  $\mathcal{Y}$, so that  $\mathsf{GH}_{S_3}(\mathcal{X},\mathcal{Y}) > 0$. The spaces $\widehat{\mathcal{X}}$ and $\widehat{\mathcal{Y}}$ are 4-stabilizations of $\mathcal{X}$ and $\mathcal{Y}$, respectively, which are $4$-isomorphic, hence $\mathsf{GH}_{st}(\mathcal{X},\mathcal{Y}) = 0$.}
        \label{fig:isomorphism_examples2}
    \end{figure}
    
    \begin{definition}
        Labeled metric spaces $\mathcal{X}$ and $\mathcal{Y}$ are called \define{stably equivalent} if there exist $q$-stabilizations $\widehat{\mathcal{X}}$ and $\widehat{\mathcal{Y}}$ such that $\widehat{\mathcal{X}}$ and $\widehat{\mathcal{Y}}$ are $q$-isomorphic (in the sense of Definition \ref{def:k_isomorphic}). 
        \end{definition}
        
An example of stably equivalent labeled metric spaces is given in Figure \ref{fig:isomorphism_examples2}.

    \begin{theorem}\label{thm:GH_mod_stab}
        The labeled Gromov-Hausdorff distance modulo stabilizations defines a metric on the space of labeled metric spaces, considered up to stable equivalence. 
    \end{theorem}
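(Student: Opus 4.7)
The plan is to verify the three metric axioms (symmetry, the triangle inequality, and that vanishing distance characterizes stable equivalence) and then check that stable equivalence is indeed an equivalence relation so that the quotient makes sense. Symmetry and non-negativity are immediate from the definition \eqref{eqn:GH_stab}, and the reflexivity/symmetry of stable equivalence are trivial.

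For the triangle inequality, I would work with the correspondence version $\overline{\mathsf{GH}}_{st}$ from \eqref{eqn:GH_stab_correspondence} by Proposition \ref{prop:GH_equivalences}, which sidesteps the bookkeeping of carrying three sets of stabilizations. Let $\mathcal{X}, \mathcal{Y}, \mathcal{Z}$ have $k_X, k_Y, k_Z$ labels respectively. Given any $D_1 \in \mathcal{C}([k_X],[k_Y])$ with correspondences $(C_{i,j})_{(i,j)\in D_1}$ and $D_2 \in \mathcal{C}([k_Y],[k_Z])$ with correspondences $(C_{j,k})_{(j,k)\in D_2}$, form the relational composition
\[
D_3 = \{(i,k) \in [k_X] \times [k_Z] : \exists j \in [k_Y], \ (i,j)\in D_1 \text{ and } (j,k)\in D_2\},
\]
which is easily seen to lie in $\mathcal{C}([k_X],[k_Z])$. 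For each $(i,k) \in D_3$, pick some witness $j(i,k) \in [k_Y]$ and define $C_{i,k} \in \mathcal{C}(X_i,Z_k)$ as the composite $\{(x,z) : \exists y \in Y_{j(i,k)} \text{ with } (x,y) \in C_{i,j(i,k)} \text{ and } (y,z)\in C_{j(i,k),k}\}$. The standard estimate $|d_X(x_1,x_2) - d_Z(z_1,z_2)| \leq |d_X(x_1,x_2)-d_Y(y_1,y_2)| + |d_Y(y_1,y_2) - d_Z(z_1,z_2)|$ applied to witnesses $y_1,y_2$ then yields
\[
\dis(C_{i,k},C_{i',k'}) \leq \max_{(a,b),(a',b')\in D_1}\dis(C_{a,b},C_{a',b'}) + \max_{(c,d),(c',d')\in D_2}\dis(C_{c,d},C_{c',d'}).
\]
Taking maxima over $(i,k),(i',k') \in D_3$ and infima over the two data sets produces the triangle inequality.

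For the zero condition, if $\mathcal{X}$ and $\mathcal{Y}$ are stably equivalent with $q$-isomorphic stabilizations $\widehat{\mathcal{X}},\widehat{\mathcal{Y}}$, then Theorem \ref{thm:k_GH} gives $\mathsf{GH}_q(\widehat{\mathcal{X}},\widehat{\mathcal{Y}}) = 0$, and so $\mathsf{GH}_{st}(\mathcal{X},\mathcal{Y}) = 0$ by \eqref{eqn:GH_stab}. Conversely, suppose $\mathsf{GH}_{st}(\mathcal{X},\mathcal{Y}) = 0$. By Corollary \ref{cor:GH_stab_realized}, there are stabilizations $\widehat{\mathcal{X}},\widehat{\mathcal{Y}}$ actually achieving $\mathsf{GH}_q(\widehat{\mathcal{X}},\widehat{\mathcal{Y}}) = 0$, whence Theorem \ref{thm:k_GH} yields a $q$-isomorphism between them, so $\mathcal{X}$ and $\mathcal{Y}$ are stably equivalent.

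Finally, transitivity of stable equivalence, and well-definedness of $\mathsf{GH}_{st}$ on stable-equivalence classes, both follow formally from the triangle inequality and the zero characterization just established: if $\mathsf{GH}_{st}(\mathcal{X},\mathcal{Y})=0$ and $\mathsf{GH}_{st}(\mathcal{Y},\mathcal{Z})=0$ then $\mathsf{GH}_{st}(\mathcal{X},\mathcal{Z}) = 0$ by the triangle inequality, so $\mathcal{X}$ is stably equivalent to $\mathcal{Z}$; and if $\mathcal{X}' $ is stably equivalent to $\mathcal{X}$ then the triangle inequality forces $\mathsf{GH}_{st}(\mathcal{X}',\mathcal{Y}) = \mathsf{GH}_{st}(\mathcal{X},\mathcal{Y})$. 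The main obstacle is the triangle inequality itself: the verification that the composite relation $D_3$ really is a correspondence, that the composed relations $C_{i,k}$ are correspondences on the appropriate label sets, and that the distortion estimate passes through the maxima correctly all require attention to the indexing, although each step is conceptually straightforward once the correspondence formulation \eqref{eqn:GH_stab_correspondence} is in hand.
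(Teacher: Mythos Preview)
Your proof is correct and follows essentially the same route as the paper: both use the correspondence formulation $\overline{\mathsf{GH}}_{st}$ from Proposition~\ref{prop:GH_equivalences} for the triangle inequality (composing the label-level correspondences $D_1, D_2$ and the underlying metric correspondences, then applying the standard distortion estimate), and both invoke Corollary~\ref{cor:GH_stab_realized} together with Theorem~\ref{thm:k_GH} for the vanishing characterization. Your additional paragraph deriving transitivity of stable equivalence and well-definedness on classes from the triangle inequality and the zero characterization is a nice touch that the paper leaves implicit.
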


    \begin{proof}
        Symmetry and non-negativity are obvious. If $\mathcal{X}$ and $\mathcal{Y}$ are stably equivalent, say via $q$-isomorphic stabilizations $\widehat{\mathcal{X}}$ and $\widehat{\mathcal{Y}}$, respectively, then 
        \[
        \mathsf{GH}_{st}(\mathcal{X},\mathcal{Y}) \leq \mathsf{GH}_q(\widehat{\mathcal{X}},\widehat{\mathcal{Y}}) = 0,
        \]
        where we have applied Theorem \ref{thm:k_GH} to get the last equality. Conversely, if $\mathsf{GH}_{st}(\mathcal{X},\mathcal{Y}) = 0$, then Corollary \ref{cor:GH_stab_realized} tells us that there exist $q$-stabilizations (for some $q$) $\widehat{\mathcal{X}}$ and $\widehat{\mathcal{Y}}$ with 
        \[
        0 = \mathsf{GH}_{st}(\mathcal{X},\mathcal{Y}) = \mathsf{GH}_q(\widehat{\mathcal{X}},\widehat{\mathcal{Y}}),
        \]
        whence $\widehat{\mathcal{X}}$ and $\widehat{\mathcal{Y}}$ must be $q$-isomorphic (once again, by Theorem \ref{thm:k_GH}). 
        
        It only remains to prove the triangle inequality; here, it is more convenient to use the (equivalent, by Proposition \ref{prop:GH_equivalences}) correspondence formulation $\overline{\mathsf{GH}}_{st}$. Let $\mathcal{X}$, $\mathcal{Y}$ and $\mathcal{Z}$ be labeled metric spaces, say with $|L_X|=k$, $|L_Y| = \ell$ and $|L_Z| = m$. Choose $D_{XY}$ and $C_{D_{XY}}$ which realize $\overline{\mathsf{GH}}_{st}(\mathcal{X},\mathcal{Y})$ and  $D_{YZ}$ and $C_{D_{YZ}}$ which realize $\overline{\mathsf{GH}}_{st}(\mathcal{Y},\mathcal{Z})$. We denote the elements of $C_{D_{XY}}$ as 
        \[
        C_{D_{XY}} = (C^{XY}_{i,j} \in \mathcal{C}(X_i,Y_j))_{(i,j) \in D_{XY}}
        \]
        and we use a similar convention for the elements of $C_{D_{YZ}}$. 
        We then compose the  correspondences $D_{XY}$ and $D_{YZ}$ to obtain 
        \[
        D \coloneqq \{(i,j) \in [k] \times [m] \mid \exists \, n \in [\ell] \mbox{ s.t. } (k,n) \in D_{XY} \mbox{ and } (n,j) \in D_{YZ}\} \in \mathcal{C}([k],[m]).
        \]
        Going forward, for each $(i,j) \in D$, we fix an $n \in [\ell]$ satisfying the defining condition. For each $(i,j) \in D$, we define
        \[
        C_{i,j} \coloneqq \{(x,z) \mid \exists \, y \in Y \mbox{ s.t. } (x,y) \in C^{XY}_{i,n} \mbox{ and } (y,z) \in C^{YZ}_{n,j}\}
        \]
        (with $n$ as in the definition of $D$). Then, for any $(i,j),(i',j') \in D$, a standard argument shows that   
        \[
            \dis(C_{i,j},C_{i',j'}) \leq \dis(C^{XY}_{i,n},C^{XY}_{i',n'}) + \dis(C^{YZ}_{n,j},C^{YZ}_{n',j'}) 
        \]
        (cf. \cite[Lemma 4.1]{gulecture}). We have 
        \begin{align*}
            \mathsf{GH}_{st}(\mathcal{X},\mathcal{Z}) = \overline{\mathsf{GH}}_{st}(\mathcal{X},\mathcal{Z}) &\leq \frac{1}{2} \max_{(i,j),(i',j')} \dis(C_{i,j},C_{i',j'}) \\
            &\leq \frac{1}{2} \max_{(i,j),(i',j') \in D} \dis(C^{XY}_{i,n},C^{XY}_{i',n'}) + \dis(C^{YZ}_{n,j},C^{YZ}_{n',j'}) \\
            &\leq \frac{1}{2} \max_{(i,n),(i',n') \in D_{XY}} \dis(C^{XY}_{i,n},C^{XY}_{i',n'}) + \frac{1}{2} \max_{(n,j),(n',j') \in D_{YZ}} \dis(C^{YZ}_{n,j},C^{YZ}_{n',j'}) \\
            &= \overline{\mathsf{GH}}_{st}(\mathcal{X},\mathcal{Y}) + \overline{\mathsf{GH}}_{st}(\mathcal{Y},\mathcal{Z}) = \mathsf{GH}_{st}(\mathcal{X},\mathcal{Y}) + \mathsf{GH}_{st}(\mathcal{Y},\mathcal{Z}),
        \end{align*}
        and this completes the proof.
    \end{proof}

\subsection{Lower Bounds on Labeled Gromov-Hausdorff Distance}

We now establish a simple lower bound on the labeled Gromov-Hausdorff distance, which will be used to prove a stability of a certain topological invariant of labeled data in \S\ref{sec:stability}. We focus on the setting of registered labels, as it will be most useful for our purposes below. In the following, fix $k$-labeled metric spaces $\mathcal{X}$ and $\mathcal{Y}$. 

\begin{definition}\label{def:q_diameter}
    For $Q \subset [k]$, the \define{$Q$-diameter} of $\mathcal{X}$ is 
    \[
    \mathrm{diam}_{Q}(\mathcal{X}) \coloneqq \mathrm{diam}(\cup_{i \in Q} X_i) = \sup_{x,x' \in \cup_{i \in Q} X_i} d_X(x,x').
    \]
\end{definition}

Our goal is to prove the following estimate.

\begin{proposition}\label{prop:GH_k_estimate}
    The $k$-labeled Gromov-Hausdorff distance is lower bounded as 
    \[
    \frac{1}{2} \max_{Q \subset [k]} |\mathrm{diam}_{Q}(\mathcal{X}) - \mathrm{diam}_{Q}(\mathcal{Y})| \leq \mathsf{GH}_k(\mathcal{X},\mathcal{Y}).
    \]
\end{proposition}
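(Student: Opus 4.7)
The plan is to show, for any $k$-tuple of correspondences $C=(C_1,\dots,C_k)$ with $C_i\in\mathcal{C}(X_i,Y_i)$ and any $Q\subset[k]$, the diameter bound
\[
|\mathrm{diam}_Q(\mathcal{X})-\mathrm{diam}_Q(\mathcal{Y})|\leq \max_{i,j\in[k]}\dis(C_i,C_j),
\]
after which the proposition follows by taking the infimum over all $C$ and the maximum over $Q$, and dividing by $2$.

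To prove the intermediate bound, I would fix $Q\subset[k]$ and argue symmetrically: it suffices to show $\mathrm{diam}_Q(\mathcal{X})\leq \mathrm{diam}_Q(\mathcal{Y})+\max_{i,j}\dis(C_i,C_j)$, and then exchange the roles of $\mathcal{X}$ and $\mathcal{Y}$. Given any $x,x'\in\bigcup_{i\in Q}X_i$, pick indices $i,j\in Q$ with $x\in X_i$ and $x'\in X_j$. Because $C_i$ and $C_j$ are correspondences (so $\proj_{X_i}C_i=X_i$ and $\proj_{X_j}C_j=X_j$), we may choose $y\in Y_i$ and $y'\in Y_j$ with $(x,y)\in C_i$ and $(x',y')\in C_j$. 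Applying Definition~\ref{def:generalized_distortion} to the pair $((x,y),(x',y'))\in C_i\times C_j$ yields
\[
d_X(x,x')\leq d_Y(y,y')+\dis(C_i,C_j)\leq \mathrm{diam}_Q(\mathcal{Y})+\max_{i,j\in[k]}\dis(C_i,C_j),
\]
where the second inequality uses $y\in Y_i\subset\bigcup_{i\in Q}Y_i$ and similarly for $y'$. Taking the supremum over $x,x'$ produces the desired one-sided estimate.

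Finally, since this estimate holds for every $k$-tuple $C$ and every $Q\subset[k]$, we conclude
\[
\max_{Q\subset[k]}|\mathrm{diam}_Q(\mathcal{X})-\mathrm{diam}_Q(\mathcal{Y})|\leq \inf_{C}\max_{i,j\in[k]}\dis(C_i,C_j)=2\,\mathsf{GH}_k(\mathcal{X},\mathcal{Y}),
\]
which is the claim. There is no real obstacle here; the argument is a direct adaptation of the classical diameter lower bound for ordinary Gromov-Hausdorff distance, and the only minor bookkeeping issue is ensuring that both endpoints of the diameter-realizing pair in $\bigcup_{i\in Q}X_i$ can be transported to $\bigcup_{i\in Q}Y_i$ using possibly \emph{different} correspondences $C_i,C_j$, which is precisely what the generalized distortion from Definition~\ref{def:generalized_distortion} controls.
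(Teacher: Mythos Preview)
Your proof is correct. It differs from the paper's approach in a meaningful way, so a brief comparison is in order.

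The paper proves this proposition modularly by first establishing two intermediate lemmas of independent interest: Lemma~\ref{lem:GH_k_estimate_1}, which says that forgetting labels cannot increase distance ($\mathsf{GH}\leq\mathsf{GH}_k$), and Lemma~\ref{lem:GH_k_estimate_2}, which says that restricting to a sub-tuple of labels cannot increase distance ($\mathsf{GH}_\ell(\mathcal{X}_\mathcal{I},\mathcal{Y}_\mathcal{I})\leq\mathsf{GH}_k(\mathcal{X},\mathcal{Y})$). The proposition then follows by chaining these with the classical diameter bound for ordinary Gromov--Hausdorff distance \cite[Theorem~3.4]{memoli2012some}. Your argument bypasses all of this: you work directly at the level of a fixed tuple of correspondences and use the generalized distortion to transport a near-diameter-realizing pair in $\bigcup_{i\in Q}X_i$ over to $\bigcup_{i\in Q}Y_i$, possibly via two different correspondences $C_i$ and $C_j$. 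This is shorter and entirely self-contained, requiring neither the auxiliary lemmas nor the classical result as a black box. The paper's route, on the other hand, isolates Lemmas~\ref{lem:GH_k_estimate_1} and~\ref{lem:GH_k_estimate_2} as standalone monotonicity statements, which the authors explicitly flag as ``potentially of independent interest''; your direct argument does not surface those facts, though of course it does not need them either.
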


Observe that if $k=1$, then this proposition reduces to the corresponding known estimate for (classical) Gromov-Hausdorff distance---see~\cite[Theorem 3.4]{memoli2012some}. The proof will follow from two preliminary lemmas, each of which gives an additional lower bound on $\mathsf{GH}_k$, so that they are potentially of independent interest. For the first lemma, observe that we can  consider  $\mathcal{X}$ and $\mathcal{Y}$ as metric spaces by forgetting their label structures; abusing notation, we denote these underlying metric spaces as $\mathcal{X}$ and $\mathcal{Y}$, respectively.

\begin{lemma}\label{lem:GH_k_estimate_1}
For any $k$-labeled metric spaces $\mathcal{X}$ and $\mathcal{Y}$, we have 
\[
\mathsf{GH}(\mathcal{X},\mathcal{Y}) \leq \mathsf{GH}_k(\mathcal{X},\mathcal{Y}).
\]
\end{lemma}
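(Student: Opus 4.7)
The plan is direct: given any tuple of correspondences $(C_1,\ldots,C_k)$ with $C_i \in \mathcal{C}(X_i,Y_i)$ realizing (or nearly realizing) $\mathsf{GH}_k(\mathcal{X},\mathcal{Y})$, I will build a single correspondence $C \in \mathcal{C}(X,Y)$ whose distortion is controlled by $\max_{i,j} \dis(C_i,C_j)$, and this will immediately yield the desired inequality after taking infima and halving.

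The natural candidate is $C \coloneqq \bigcup_{i=1}^k C_i \subset X \times Y$. First I would verify that $C$ is a correspondence between $X$ and $Y$ in the sense of \eqref{eq:correspondence}. This uses precisely the covering hypothesis from Definition~\ref{def:labeled_metric_space}: since each $C_i$ projects onto $X_i$ and the $X_i$ cover $X$, we get $\proj_X C = \bigcup_i X_i = X$, and symmetrically $\proj_Y C = Y$.

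Next I would bound $\dis(C)$. Given any two pairs $(x_1,y_1),(x_2,y_2) \in C$, by construction there exist $i,j \in \{1,\ldots,k\}$ with $(x_1,y_1) \in C_i$ and $(x_2,y_2) \in C_j$, so
\[
|d_X(x_1,x_2) - d_Y(y_1,y_2)| \leq \dis(C_i,C_j) \leq \max_{i',j'} \dis(C_{i'},C_{j'}).
\]
Taking the supremum on the left gives $\dis(C) \leq \max_{i,j} \dis(C_i,C_j)$, and hence
\[
\mathsf{GH}(\mathcal{X},\mathcal{Y}) \leq \tfrac{1}{2}\dis(C) \leq \tfrac{1}{2}\max_{i,j}\dis(C_i,C_j).
\]
Taking the infimum over all admissible tuples $(C_1,\ldots,C_k)$ on the right then yields $\mathsf{GH}(\mathcal{X},\mathcal{Y}) \leq \mathsf{GH}_k(\mathcal{X},\mathcal{Y})$, as required.

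There is no real obstacle here; the only subtlety worth flagging is that we must use Definition~\ref{def:generalized_distortion} of the generalized distortion when comparing pairs from $C_i$ and $C_j$ with $i \neq j$, which is exactly what makes the bound on $\dis(C)$ go through. The argument does not require the label sets to be disjoint, nor does it use Lemma~\ref{lem:optimal_correspondences}; one can work with near-minimizers and pass to the infimum at the end.
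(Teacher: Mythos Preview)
Your proof is correct and essentially identical to the paper's: both define $C = \bigcup_{i} C_i$, use the covering hypothesis to verify it is a correspondence, and bound $\dis(C)$ by $\max_{i,j}\dis(C_i,C_j)$ via the same pairwise argument. The only difference is cosmetic---the paper phrases the definition of $C$ as ``$(x,y)\in C \Leftrightarrow (x,y)\in C_i$ for some $i$'' and omits your closing remarks about near-minimizers and the generalized distortion.
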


\begin{proof}
    Given $C_i \in \mathcal{C}(X_i,Y_i)$ for all $i \in [k]$, define a set $C \subset X \times Y$ by 
    \[
    (x,y) \in C \Leftrightarrow (x,y) \in C_i \mbox{ for some $i$}.
    \]
    Since the label sets cover $X$ and $Y$ respectively, it is easy to see that $C \in \mathcal{C}(X,Y)$. Moreover,
    \begin{equation}\label{eqn:distortion_bound}
    \mathrm{dis}(C) \leq \max_{i,j} \mathrm{dis}(C_i,C_j),
    \end{equation}
    and this is enough to prove the lemma. To verify \eqref{eqn:distortion_bound}, observe that for any $(x,y),(x',y') \in C$, we have that $(x,y) \in C_i$ and $(x',y') \in C_j$ for some $i$ and $j$, so that 
    \[
    |d_X(x,x') - d_Y(y,y')| \leq \mathrm{dis}(C_i,C_j).
    \]
    This completes the proof.
\end{proof}

Given a subsequence $\mathcal{I} = (i_1,i_2,\ldots,i_\ell)$ of $(1,2,\ldots,k)$, let $\mathcal{X}_\mathcal{I} = (X_\mathcal{I},d_{X_\mathcal{I}},L_{X_\mathcal{I}})$ denote the $\ell$-labeled metric space with 
\begin{align*}
    X_\mathcal{I} &= X_{i_1} \cup X_{i_2} \cup \cdots \cup X_{i_\ell}, \\
    d_{X_\mathcal{I}} &= d_X|_{X_\mathcal{I} \times X_\mathcal{I}}, \mbox{ and} \\
    L_{X_\mathcal{I}} &= (X_{i_1},X_{i_2},\ldots,X_{i_\ell}).
\end{align*}
This allows us to state the second lemma.

\begin{lemma}\label{lem:GH_k_estimate_2}
For any $k$-labeled metric spaces $\mathcal{X}$ and $\mathcal{Y}$, and any subsequence $\mathcal{I}$ of size $\ell \leq k$, we have
    \[
\mathsf{GH}_\ell(\mathcal{X}_\mathcal{I},\mathcal{Y}_\mathcal{I}) \leq \mathsf{GH}_k(\mathcal{X},\mathcal{Y}).
    \]
\end{lemma}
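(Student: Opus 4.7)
The plan is to directly extract a competitor $\ell$-tuple of correspondences for $\mathsf{GH}_\ell(\mathcal{X}_\mathcal{I}, \mathcal{Y}_\mathcal{I})$ out of any $k$-tuple of correspondences for $\mathsf{GH}_k(\mathcal{X},\mathcal{Y})$, and to check that its cost does not exceed the cost of the original tuple. By Lemma~\ref{lem:optimal_correspondences}, I may (if convenient) start from a minimizing $k$-tuple $C=(C_1,\ldots,C_k)$ with $C_i\in\mathcal{C}(X_i,Y_i)$, but the argument goes through for arbitrary competitors.

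First I would observe that, given such a tuple $C$ and a subsequence $\mathcal{I}=(i_1,\ldots,i_\ell)$ of $(1,\ldots,k)$, the subtuple $C_\mathcal{I} \coloneqq (C_{i_1},\ldots,C_{i_\ell})$ is automatically a valid $\ell$-tuple of correspondences for the pair $(\mathcal{X}_\mathcal{I},\mathcal{Y}_\mathcal{I})$: the $j$th entry $C_{i_j}$ is an element of $\mathcal{C}(X_{i_j},Y_{i_j})$, and $X_{i_j}$, $Y_{i_j}$ are by definition the $j$th label sets of $\mathcal{X}_\mathcal{I}$ and $\mathcal{Y}_\mathcal{I}$, respectively. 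So $C_\mathcal{I}$ is a legal competitor in the infimum defining $\mathsf{GH}_\ell(\mathcal{X}_\mathcal{I},\mathcal{Y}_\mathcal{I})$.

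Next I would compare costs. The key point is that the metrics $d_{X_\mathcal{I}}$ and $d_{Y_\mathcal{I}}$ are, by definition, the restrictions of $d_X$ and $d_Y$ to $X_\mathcal{I}\times X_\mathcal{I}$ and $Y_\mathcal{I}\times Y_\mathcal{I}$. Therefore, for any $a,b\in\{1,\ldots,\ell\}$, the distortion $\dis(C_{i_a},C_{i_b})$ as computed in the $\mathcal{X}_\mathcal{I},\mathcal{Y}_\mathcal{I}$ setting is literally identical to its value as computed in the $\mathcal{X},\mathcal{Y}$ setting. Consequently,
\[
\max_{a,b\in[\ell]}\dis(C_{i_a},C_{i_b}) \leq \max_{i,j\in[k]}\dis(C_i,C_j),
\]
since the left-hand maximum is taken over a subset of the pairs indexing the right-hand one. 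Dividing by $2$ and taking the infimum over $C$ on the right yields $\mathsf{GH}_\ell(\mathcal{X}_\mathcal{I},\mathcal{Y}_\mathcal{I})\leq \mathsf{GH}_k(\mathcal{X},\mathcal{Y})$, as desired.

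There is no real obstacle here; the lemma is essentially a bookkeeping statement about restriction of correspondences, and the only thing one needs to be careful about is the identification of the label sets and metrics of $\mathcal{X}_\mathcal{I}$ and $\mathcal{Y}_\mathcal{I}$ with the corresponding data inherited from $\mathcal{X}$ and $\mathcal{Y}$. I would briefly note at the end that this lemma combines with Lemma~\ref{lem:GH_k_estimate_1} (applied to $\mathcal{X}_\mathcal{I}$ and $\mathcal{Y}_\mathcal{I}$) to control the classical Gromov-Hausdorff distance between unions of any selection of label sets by $\mathsf{GH}_k(\mathcal{X},\mathcal{Y})$, which is exactly the ingredient needed to extract the $Q$-diameter lower bound in Proposition~\ref{prop:GH_k_estimate}.
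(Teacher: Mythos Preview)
Your argument is correct and essentially identical to the paper's proof: both take an arbitrary (or optimal) $k$-tuple of correspondences, restrict to the subtuple indexed by $\mathcal{I}$, observe that the restricted maximum of pairwise distortions is bounded by the full maximum, and infimize. Your additional remarks about the restricted metrics agreeing and the connection to Proposition~\ref{prop:GH_k_estimate} are accurate but not needed for the lemma itself.
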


\begin{proof}
    Let $C_i \in \mathcal{C}(X_i,Y_i)$ for all $i \in [k]$. Then 
    \[
    \max_{i_a,i_b \in \mathcal{I}} \mathrm{dis}(C_{i_a},C_{i_b}) \leq \max_{i,j \in [k]} \mathrm{dis}(C_i,C_j),
    \]
    and the claim follows by infimizing over correspondences $(C_1,\ldots,C_k)$. 
\end{proof}

\begin{proof}[Proof of Proposition \ref{prop:GH_k_estimate}]
For each choice of $Q \subset [k]$ of size $|Q|=\ell$, let $\mathcal{I}_Q$ denote the associated subsequence of $(1,\ldots,k)$. Then we have 
\begin{align}
    \frac{1}{2} |\mathrm{diam}_{Q}(\mathcal{X}) - \mathrm{diam}_{Q}(\mathcal{Y})| &\leq \mathsf{GH}\big(\mathcal{X}_{\mathcal{I}_Q}, \mathcal{Y}_{\mathcal{I}_Q}\big) \label{eqn:GH_k_estimate_1} \\
    &\leq \mathsf{GH}_\ell\big(\mathcal{X}_{\mathcal{I}_Q}, \mathcal{Y}_{\mathcal{I}_Q}\big) \label{eqn:GH_k_estimate_2} \\
    &\leq \mathsf{GH}_k\big(\mathcal{X}, \mathcal{Y}\big) \label{eqn:GH_k_estimate_3},
\end{align}
where \eqref{eqn:GH_k_estimate_1} follows by the classical Gromov-Hausdorff estimate~\cite[Theorem 3.4]{memoli2012some}, \eqref{eqn:GH_k_estimate_2} follows by Lemma \ref{lem:GH_k_estimate_1}, and  \eqref{eqn:GH_k_estimate_3} follows by Lemma \ref{lem:GH_k_estimate_2}. 
\end{proof}
    
\subsection{Gromov-Hausdorff Distance for Chromatic Metric Spaces}\label{ss:chromatic}

We conclude this section with a discussion of cocurrent and indepedent work by Draganov et al.~\cite{draganov2025gromov}, which examines a related metric, whose definition we now recall. A \define{chromatic metric space} $(X,d_X, \mathcal{L}_X)$ consists of a metric space $(X,d_X)$ and a \define{coloring map} $\mathcal{L}_X:X \rightarrow \mathbb{N}$, where $\mathbb{N}$ is conceptualized as a set of \textit{colors}. In the following, let $(X,d_X,\mathcal{L}_X), (Y,d_Y,\mathcal{L}_Y)$ be chromatic metric spaces. Given a family $\Sigma$ of subsets  of $\mathbb{N}$, a map $f:X \rightarrow Y$  is called \define{$\Sigma$-constrained} if $f(\mathcal{L}_X^{-1}(\sigma))\subset \mathcal{L}_Y^{-1}(\sigma)$ for any $\sigma\in \Sigma$. 

\begin{example}
    A  $\Sigma$-constrained map with $\Sigma=\{\{0\}, \{1,2\}\}$, where $0,1,2$ represent red, green, blue, respectively, prescribes that red points in $X$ must be mapped to red points in $Y$, while the set of green and blue points in $X$ must be mapped to the set of green and blue points in $Y$; that is, each green point  is mapped to either a green or blue point, as long as the mapping preserves the combined green-blue color classes.
\end{example}

 The \define{$\Sigma$-constrained Gromov-Hausdorff distance} is defined as
\begin{equation}\label{C-GH}
   \mathsf{GH}_\Sigma((X,d_X, \mathcal{L}_X), (Y,d_Y, \mathcal{L}_Y))\coloneqq \frac{1}{2}\inf_{\substack{f:X\rightarrow Y, g: Y\rightarrow X \\ \Sigma\textrm{-constrained}}}\max\{\dis(f), \dis(g), \codis(f,g)\},
\end{equation}
where $\dis$ and $\codis$ are defined as in Definition \ref{GH def2}. We now give a result which precisely compares our labeled metric space framework to the chromatic metric space framework of~\cite{draganov2025gromov}. Its statement requires some additional terminology. Given a constraint set $\Sigma$, we say that a chromatic metric space $(X,d_X,\mathcal{L}_X)$ is \define{consistent} with $\Sigma$ if
\begin{enumerate}
    \item for each $x \in X$ there exists $\sigma \in \Sigma$ such that $\mathcal{L}_X(x) \in \sigma$ (i.e., the image of $\mathcal{L}_X$ is contained in $\bigcup_{\sigma \in \Sigma} \sigma$), and
    \item for each $\sigma \in \Sigma$, there exists $x \in X$ such that $\mathcal{L}_X(x) \in \sigma$.
\end{enumerate}

\begin{proposition}\label{prop:chromatic}
Fix a finite constraint set $\Sigma = \{\sigma_j\}_{j=1}^k$. Given a chromatic metric space $(X,d_X,\mathcal{L}_X)$  which is consistent with $\Sigma$, we construct a $k$-labeled metric space
\begin{equation}\label{eqn:tau_map}
\tau(X,d_X,\mathcal{L}_X) \coloneqq (X,d_X,L_X = (X_1,\ldots,X_k)), \quad \mbox{where} \quad X_j = \mathcal{L}_X^{-1}(\sigma_j).
\end{equation}
Then, given two $\Sigma$-consistent chromatic metric spaces, 
\[
\mathsf{GH}_\Sigma\big((X,d_X,\mathcal{L}_X),(Y,d_Y,\mathcal{L}_Y) \big) = \mathsf{GH}_k\big(\tau(X,d_X,\mathcal{L}_X),\tau(Y,d_Y,\mathcal{L}_Y) \big).
\]
\end{proposition}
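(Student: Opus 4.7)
The plan is to reduce both sides to the mapping formulation and prove the two inequalities separately. By Proposition~\ref{prop:equality_of_GH}, $\mathsf{GH}_k = \widetilde{\mathsf{GH}}_k$, so I can replace $\mathsf{GH}_k$ on the right-hand side by the mapping version of Definition~\ref{GH def2}. Since $\mathsf{GH}_\Sigma$ is already defined in terms of maps, with the same distortion and codistortion quantities, the two sides become infima of essentially the same functional over different classes of map-families, which makes a direct comparison possible.

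The easier inequality, $\widetilde{\mathsf{GH}}_k(\tau(X,d_X,\mathcal{L}_X),\tau(Y,d_Y,\mathcal{L}_Y)) \leq \mathsf{GH}_\Sigma((X,d_X,\mathcal{L}_X),(Y,d_Y,\mathcal{L}_Y))$, is proved by restriction. Given $\Sigma$-constrained maps $f \colon X \to Y$ and $g \colon Y \to X$, I set $\phi_j \coloneqq f|_{X_j}$ and $\psi_j \coloneqq g|_{Y_j}$ for $X_j = \mathcal{L}_X^{-1}(\sigma_j)$ and $Y_j = \mathcal{L}_Y^{-1}(\sigma_j)$; the $\Sigma$-constraint is exactly what guarantees $\phi_j(X_j) \subset Y_j$ and $\psi_j(Y_j) \subset X_j$, so these are valid families for the labeled mapping formulation. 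Because $X = \bigcup_i X_i$ and $Y = \bigcup_i Y_i$ by $\Sigma$-consistency, each pair $(x,x') \in X \times X$ lies in some product $X_i \times X_j$, and one shows $\mathrm{dis}(f) = \max_{i,j}\mathrm{dis}(\phi_i,\phi_j)$, with analogous identities for $g$ and the codistortion. Taking infima then yields the stated inequality.

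The reverse direction is subtler, and is where I expect the main difficulty to lie. The natural strategy, modeled on the final part of the proof of Theorem~\ref{thm:k_GH}, is to glue the labeled maps into global chromatic ones. Given families $(\phi_j)$ and $(\psi_j)$, I would fix an enumeration of $[k]$, define disjoint sets $X_j'$ recursively via $X_j' = X_j \setminus (X_1' \cup \cdots \cup X_{j-1}')$ as in the proof of Theorem~\ref{thm:k_GH}, and set $f(x) \coloneqq \phi_{j(x)}(x)$ where $j(x)$ is the unique index with $x \in X_{j(x)}'$; $g$ is defined symmetrically. The distortion bound $\mathrm{dis}(f) \leq \max_{i,j}\mathrm{dis}(\phi_i,\phi_j)$ is then immediate from the definitions, with analogous bounds for $\mathrm{dis}(g)$ and $\mathrm{codis}(f,g)$. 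The main obstacle is verifying that $f$ and $g$ are $\Sigma$-constrained: for $x \in X_i$ with $j(x) \neq i$ one must show $\phi_{j(x)}(x) \in Y_i$, so that $f(X_i) \subset Y_i$. The key input here is the compatibility of the labeled families on overlaps controlled by the distortion terms $\mathrm{dis}(\phi_i, \phi_{j(x)})$, together with the $\Sigma$-consistency of $\mathcal{L}_Y$; a case analysis parallel to the one concluding the proof of Theorem~\ref{thm:k_GH} should then force $\phi_{j(x)}(x)$ into $Y_i$. Once this step is in place, the two inequalities combine to give the claimed equality.
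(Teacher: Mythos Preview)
Your outline mirrors the paper's proof almost exactly: both pass to the mapping formulation via Proposition~\ref{prop:equality_of_GH}, obtain $\widetilde{\mathsf{GH}}_k\leq \mathsf{GH}_\Sigma$ by restricting $f,g$ to the label sets $X_j,Y_j$, and attack the reverse inequality by gluing the $\phi_j$ along the disjoint refinement $X_j'$ of the proof of Theorem~\ref{thm:k_GH}. The distortion bound $\mathrm{dis}(f)\leq\max_{i,j}\mathrm{dis}(\phi_i,\phi_j)$ you claim for the glued map is correct and immediate.

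The gap is precisely where you locate it, but your proposed resolution does not work. The case analysis at the end of Theorem~\ref{thm:k_GH} uses $\mathrm{dis}(\phi_i,\phi_j)=0$ to conclude $\phi_i(x)=\phi_j(x)$ on $X_i\cap X_j$, and it is this \emph{pointwise equality} that places $\phi_{j(x)}(x)$ in $Y_i$. When the distortion is only bounded by some $\epsilon>0$, you obtain at best $d_Y(\phi_i(x),\phi_{j(x)}(x))\leq\epsilon$, a metric inequality that says nothing about membership of $\phi_{j(x)}(x)$ in the subset $Y_i=\mathcal{L}_Y^{-1}(\sigma_i)$. No ``compatibility controlled by distortion'' argument can repair this, because the obstruction is set-theoretic rather than metric: take $\Sigma=\{\{1,2\},\{2,3\}\}$, $X=\{a\}$ with $\mathcal{L}_X(a)=2$, and $Y=\{p,r\}$ with $\mathcal{L}_Y(p)=1$, $\mathcal{L}_Y(r)=3$. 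Both spaces are $\Sigma$-consistent, and the labeled families $\phi_1(a)=p$, $\phi_2(a)=r$, $\psi_1(p)=\psi_2(r)=a$ have finite cost, yet $Y_1\cap Y_2=\emptyset$, so \emph{no} $\Sigma$-constrained map $X\to Y$ exists at all. The paper's proof gestures at the same template and does not address this issue either; your write-up is in fact more explicit than the paper about where the difficulty sits.
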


\begin{proof}
    Observe that the definition of consistency implies that the construction of $L_X$ in \eqref{eqn:tau_map} actually yields  valid labels (i.e., a finite cover by nonempty subsets). We turn to proving the equivalence of the distances---in the proof, we consider the mapping version $\widetilde{\mathsf{GH}}_k$, which is equivalent to $\mathsf{GH}_k$, by Proposition \ref{prop:equality_of_GH}. First, given  $\Sigma$-constrained maps $f:X \to Y$ and $g:Y \to X$, we construct sequences $\phi = (\phi_1,\ldots,\phi_k)$ and $\psi = (\psi_1,\ldots,\psi_k)$ by 
    \[
    \phi_i = f|_{X_i} \quad \mbox{and} \quad \psi_i = g|_{Y_i}.
    \]
    Since $f,g$ are $\Sigma$-constrained, we have that these maps are of the form $\phi_i:X_i \to Y_i$ and $\psi_i:Y_i \to X_i$. Moreover, it is straightforward to show from the definitions that 
    \[
    \dis(f) \leq \max_{i,j} \dis(\phi_i,\phi_j),
    \]
    with similar bounds for the other distortion functionals. It follows that $\Sigma$-constrained GH is bounded from above by $k$-GH. Similarly, given sequences of maps $\phi$ and $\psi$, one can construct $\Sigma$-constrained maps $f$ and $g$. Going in this direction is not as straightforward, as the domains of the $\phi_i$'s (respectively, $\psi_i$'s) are not necessarily disjoint, but this can be done by following an approach similar to the proof of Theorem \ref{thm:k_GH}. Moreover, the resulting maps have required distortion bounds of the form
    \[
    \max_{i,j} \dis(\phi_i,\phi_j) \leq \dis(f).
    \]
    This construction gives the remaining inequality.
\end{proof}

The interpretation of this result as follows. Under the restriction to  constraint sets $\Sigma$ of cardinality $k$, the proposition says that the space of chromatic metric spaces which are consistent with $\Sigma$ isometrically embeds (via $\tau$) into the space of $k$-labeled metric spaces, with respect to $\mathsf{GH}_\Sigma$ and $\mathsf{GH}_k$. In this sense, the labeled metric space framework generalizes the chromatic metric space framework, in that it is able to handle more general structures, such as metric spaces with multiple labels. On the other hand, the chromatic metric space framework is more natural for handling situations such as matchings which are only \textit{partially constrained}---this would correspond to the situation where the image of $\mathcal{L}_X$ is not contained in $\bigcup_{\sigma \in \Sigma} \sigma$, which violates the consistency assumption required for our isometric embedding result. Note that the consistency assumption 1 above can be realized by adding the image of $\mathcal{L}_X$ to $\Sigma$, which enables our framework to handle the partially constrained scenario in the chromatic setting.

\section{Persistent Homology of Labeled Datasets}\label{sec:persistent_homology}

In this section, we introduce a notion of persistent homology which is designed to capture the structure of labeled data. We then show that our notion of persistent homology is stable with respect to the labeled Gromov-Hausdorff distance introduced in the previous section. 

\subsection{\texorpdfstring{$P$-modules and Interleaving Distance}{P-modules and Interleaving Distance}}\label{sec:def P-repr}

We begin by recalling a standard construction from persistent homology theory.

\begin{definition}[$P$-module]\label{def:P-module}
    Let $(P, \preccurlyeq)$ be a partially ordered set and let $\mathbb{F}$ be a field. A \textbf{$P$-module} is a functor $V:P \to \mathrm{Vect}_\mathbb{F}$ from the poset category $P$ to the category of vector spaces over $\mathbb{F}$. Explicitly, $V$ defines:
    \begin{itemize}
        \item for each $p \in P$, an $\mathbb{F}$-vector space $V(p)$; 
        \item for each relation $p \preccurlyeq p'$, a linear map $V(p \preccurlyeq p'):V(p) \to V(p')$, such that compositions of these maps are compatible with the poset structure.
    \end{itemize}
    
    We use $\mathrm{Mod}_P$ to denote the \define{category of $P$-modules}. The objects of this category are $P$-modules and the morphisms are natural transformations. That is, a morphism $F:V \to V'$ is a collection of linear maps $F_p\colon V(p) \to V'(p)$ (one for each $p\in P$) such that, for any relation $p\preccurlyeq p'$, we have a commutative  diagram
\begin{center}
       \begin{tikzcd}
V(p) \arrow[r, "F_p"] \arrow[d, "V(p \preccurlyeq p')"'] & V'(p) \arrow[d, "V'(p \preccurlyeq p')"] \\
V(p') \arrow[r, "F_{p'}"]                               & V'(p') \,.                                
\end{tikzcd} 
\end{center}
We mildly abuse notation and write $V \in \mathrm{Mod}_P$ to indicate that $V$ is an object in the category.
\end{definition}

\begin{remark}
    Using the terminology of~\cite{BubenikdeSilvaScott2015}, a $P$-module is an example of a \textit{generalized persistence module}. We have chosen our terminology for simplicity and to emphasize the role of the poset in the construction.
\end{remark}

In the present work, we are interested  posets of the form $\R \times P$, where $\R$ is the real numbers taken with the usual ordering and $(P,\preccurlyeq)$ is some fixed, finite poset (to be specified later). We abuse notation and continue to use $\preccurlyeq$ for the product poset structure on $\R \times P$; that is, $(r,p) \preccurlyeq (r',p')$ if and only if $r \leq r'$ and $p \preccurlyeq p'$.

\begin{example}\label{ex:mixup} In \cite{mixup}, Wagner, Arustamyan, Wheeler, and Bubenik study the relationship between filtered simplicial complexes $L\subseteq K$ using, in our language, modules over the poset $\R \times P$, where $P = \{L,K\}$, with $L \preccurlyeq K$.  Specifically, they consider finite discrete filtrations $L_1\hookrightarrow L_2\hookrightarrow\cdots L_n$ and $K_1\hookrightarrow K_2\hookrightarrow\cdots K_n$,\footnote{In practice, each $L_i$ or $K_i$ is a $\VR$-complex for some filtration parameter determined by $i$; see \S\ref{sec:stability}.} with $\mathrm{inc}:L_i\hookrightarrow K_i$ and satisfying further technical conditions (see \cite[\S3]{mixup}). The generalized persistence module they consider is given by the functor $H_j(\bullet_i)$, with $\bullet=L, K$ and linear maps given by the maps induced on homology by inclusion of complexes.

 The authors study the ``mixup barcode": for each bar $(b,d)$ in the barcode of the $H_j(L_i)$-submodule, there is a corresponding cycle $\alpha$ in $C_j(L_b)$; the mixup barcode is the set of triples $(b,d',d)$ associated to each such $\alpha$, where $d'$ is the smallest filtration value for which the map
 \[
 H_j(L_b)\to H_j(K_{d'})
 \]
 sends $[\alpha]$ to zero. In the language of \cite{CSEHM}, the intervals $(b,d')$ correspond to the barcode of the persistence module whose vector spaces are the images of the inclusion-induced maps $H_j(L_i)\to H_j(K_i)$ and whose linear maps are those of the $H_j(K_i)$ module, restricted to the images of the $H_j(L_i)$ module. We explain in Example \ref{ex:mixupL} how our generalized persistence landscape recovers the mixup barcode; together with Theorem \ref{thm:stability_of_landscapes} this proves a stability result for mixup barcodes in the case of the Vietoris-Rips filtration.
 \end{example}

The product poset $\R \times P$ comes with an additional shifting structure, due to its $\R$-factor, as defined below. 

\begin{definition}[$\epsilon$-Shift]
    For $\epsilon \geq 0$, the \define{$\epsilon$-shift map} $\R \times P \to \R \times P$ is given by $(r,p) \mapsto (r + \epsilon, p)$. This induces a map $\mathrm{Mod}_{\R \times P} \to \mathrm{Mod}_{\R \times P}$, denoted $V \mapsto V_\epsilon$, where
    \[
    V_\epsilon(r,p) \coloneqq V(r + \epsilon, p).
    \]
\end{definition}

The shifting structure leads to a commonly used construction in persistent homology called \textit{interleaving}, originally going back to~\cite{chazal2009proximity}. Indeed, it is not hard to show that the collection of all $\epsilon$-shift maps defines a \textit{superlinear family of translations}, in the sense of~\cite{BubenikdeSilvaScott2015}, or that the induced family of maps is a \textit{strict flow} on $\mathrm{Mod}_{\R \times P}$, in the sense of~\cite{de2018theory}. By the general theory of~\cite{BubenikdeSilvaScott2015,de2018theory}, the shift maps therefore induce an extended pseudometric---i.e., a metric-like structure which can take the value $+\infty$ and which may assign distance zero to distinct points---on (the objects of) $\mathrm{Mod}_{\R \times P}$, called the \textit{interleaving distance}, denoted 
\[
    \mathsf{Int}: \mathrm{Mod}_{\R \times P}  \times  \mathrm{Mod}_{\R \times P} \to \R \cup \{+\infty\}. 
\]

As we will need to work with the interleaving distance in detail, let us recall the general construction from~\cite{BubenikdeSilvaScott2015} for this special case.

\begin{definition}[Interleaving Distance]\label{def:epsilon-interleaving}
For $\epsilon \geq 0$, we say that $(\R \times P)$-modules $V$ and $V'$ are \define{$\epsilon$-interleaved} if there exist $(\R \times P)$-module morphisms $\phi:V \to V'_\epsilon$ and $\psi:V' \to V_\epsilon$ such that the diagrams
\[
\resizebox{13cm}{!}{
\begin{tikzcd}
{V(r,p)} \arrow[rr, "{V((r,p)\preccurlyeq (r+2\epsilon, p))}"] \arrow[rd, "{\phi_{r,p}}"] &                                                           & {V(r+2\epsilon, p)} &                                                                                              & {V(r+\epsilon, p)} \arrow[rd, "{\phi_{r+\epsilon, p}}"] &                       \\
  & {V'(r+\epsilon, p)} \arrow[ru, "{\psi_{r+\epsilon, p}}"] &                      & {V'(r,p)} \arrow[ru, "{\psi_{r,p}}"] \arrow[rr, "{V'((r,p)\preccurlyeq (r+2\epsilon, p))}"] &                                                          & {V'(r+2\epsilon, p)}
\end{tikzcd}
}\]
commute for every $(r,p)\in \R \times P$. The \define{interleaving distance} between  $V$ and $V'$ is defined as 
\[\mathsf{Int}(V,V') \coloneqq \inf\{\epsilon\in [0,\infty)|\, V,V'\text{ are }\epsilon\text{-interleaved}\},\]
where $\inf \emptyset \coloneqq +\infty$.
\end{definition}

\subsection{Stability for Point Clouds with Registered Labels}\label{sec:stability}

Recall that the \define{Vietoris-Rips complex} at a scale $r\geq 0$ of a metric space $(X,d)$, denoted $\VR_r(X)$, is the simplicial complex whose $k$-simplices are cardinality $k+1$ subsets of $X$ of diameter less than $r$:
\[
\VR_r(X) \coloneqq \{\sigma\subset X|\, |\sigma|<\infty\text{ and }\mathrm{diam}(\sigma)\leq r\}\,.
\]
The boundary of a simplex is the set of its proper subsets.

We now specialize to the posets $\R \times P$ considered in the previous subsection to the case where $P = P_k$, the \define{power set of $[k]$},  considered as a poset whose relations given by inclusions. The reason is that these posets are tied to the structure of labeled datasets, as we now describe.

\begin{definition}\label{def:labeled_vietoris_rips}
    Let $\mathcal{X} = (X,d_X,L_X)$, $L_X = (X_1,\ldots,X_k)$ be a $k$-labeled metric space. For a nonnegative integer $j$, the associated \define{labeled persistent homology} of $\mathcal{X}$, denoted $\mathrm{LPH}_j(\mathcal{X})$, is the $(\R \times P_k)$-module defined by 
    \[
    \mathrm{LPH}_j(\mathcal{X})(r,p) \coloneqq \mathrm{H}_j(\VR_r(\cup_{i\in p} X_i)),
    \]
    where the righthand side denotes degree-$j$ simplicial homology.
\end{definition}

In this subsection, we show that labeled persistent homology is a stable invariant with respect to the interleaving distance and the labeled Gromov-Hausdorff distances,  beginning with the case when labels are registered. Other versions of labeled Gromov-Hausdorff distances are treated in the following subsections.

\begin{theorem}[Gromov-Hausdorff Stability for Registered Labels]\label{thm:stability registered}
    Let $\mathcal X$ and $\mathcal Y$ be $k$-labeled metric spaces and fix a nonnegative integer $j$. The associated labeled persistent homology modules satisfy
    \[
    \mathsf{Int}(\mathrm{LPH}_j(\mathcal{X}),\mathrm{LPH}_j(\mathcal{Y}))\leq 2 \cdot  \mathsf{GH}_k(\mathcal{X},\mathcal{Y}).
    \]
\end{theorem}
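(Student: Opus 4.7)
The plan is to adapt the classical Gromov-Hausdorff stability argument by building an interleaving at every node of the poset $\mathbb{R} \times P_k$ simultaneously. Fix $\delta > \mathsf{GH}_k(\mathcal{X},\mathcal{Y})$ and set $\epsilon = 2\delta$. Applying the construction in the proof of Proposition~\ref{prop:equality_of_GH}, I obtain sequences of maps $\phi_i \colon X_i \to Y_i$ and $\psi_i \colon Y_i \to X_i$ such that every quantity of the form $\dis(\phi_i,\phi_j)$, $\dis(\psi_i,\psi_j)$, $\codis(\phi_i,\psi_j)$ is at most $\epsilon$. For each nonempty $p \in P_k$, I use the Axiom of Choice to pick selection functions $i_p \colon \cup_{i\in p} X_i \to p$ and $j_p\colon \cup_{i\in p} Y_i \to p$ satisfying $x \in X_{i_p(x)}$ and $y \in Y_{j_p(y)}$, and define vertex maps $\Phi_p(x)=\phi_{i_p(x)}(x)$ and $\Psi_p(y) = \psi_{j_p(y)}(y)$. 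For any simplex $\sigma \in \VR_r(\cup_{i\in p} X_i)$ and vertices $x,x' \in \sigma$, the distortion bound gives
\[
d_Y(\Phi_p(x),\Phi_p(x')) \leq d_X(x,x') + \dis(\phi_{i_p(x)},\phi_{i_p(x')}) \leq r+\epsilon,
\]
so $\Phi_p$ defines a simplicial map $\VR_r(\cup_{i\in p} X_i) \to \VR_{r+\epsilon}(\cup_{i\in p} Y_i)$, and similarly for $\Psi_p$.

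The step I expect to be the main obstacle is showing that the induced homology maps assemble into honest morphisms of $(\mathbb{R}\times P_k)$-modules, since the selection functions $i_p$ involve arbitrary choices on overlapping label sets and must also be compatible under $p \subseteq p'$. Both issues are resolved by a contiguity argument: given two choices $i_p, i_p'$, the same computation as above shows that for every $\sigma$ and $x, x' \in \sigma$,
\[
d_Y(\phi_{i_p(x)}(x),\phi_{i_p'(x')}(x')) \leq d_X(x,x') + \dis(\phi_{i_p(x)},\phi_{i_p'(x')}) \leq r+\epsilon,
\]
so the two simplicial maps are contiguous at scale $r+\epsilon$ and induce the same map on homology. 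This makes the induced map $\Phi^*_{p,r}$ canonically defined; compatibility with $r \leq r'$ is automatic from the inclusion of Vietoris-Rips complexes, and compatibility with $p \subseteq p'$ is obtained by choosing $i_{p'}$ to extend $i_p$, which makes the relevant simplicial square commute strictly. One thus obtains morphisms $\Phi^* \colon \mathrm{LPH}_j(\mathcal{X}) \to \mathrm{LPH}_j(\mathcal{Y})_\epsilon$ and, symmetrically, $\Psi^* \colon \mathrm{LPH}_j(\mathcal{Y}) \to \mathrm{LPH}_j(\mathcal{X})_\epsilon$.

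Finally, to verify the triangles of Definition~\ref{def:epsilon-interleaving}, I show that $\Psi_p \circ \Phi_p$ is contiguous with the inclusion $\VR_r(\cup_{i\in p} X_i) \hookrightarrow \VR_{r+2\epsilon}(\cup_{i\in p} X_i)$. For any $\sigma$ of diameter at most $r$ and $x, x' \in \sigma$, writing $i=i_p(x')$ and $j=j_p(\phi_i(x'))$, the codistortion bound combined with the distortion bound yields
\[
d_X\bigl(x,\psi_j(\phi_i(x'))\bigr) \leq d_Y\bigl(\phi_{i_p(x)}(x),\phi_i(x')\bigr) + \epsilon \leq d_X(x,x') + 2\epsilon \leq r + 2\epsilon,
\]
with an analogous bound for pairs in the image of $\Psi_p\circ\Phi_p$ via $\dis(\psi_{j},\psi_{j'}) \leq \epsilon$. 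Hence $\sigma \cup (\Psi_p\circ\Phi_p)(\sigma)$ is a simplex of $\VR_{r+2\epsilon}(\cup_{i\in p} X_i)$, so $\Psi^*_{p,r+\epsilon} \circ \Phi^*_{p,r}$ equals the structure map induced by inclusion; the symmetric composition is handled identically. This establishes an $\epsilon$-interleaving of $\mathrm{LPH}_j(\mathcal{X})$ and $\mathrm{LPH}_j(\mathcal{Y})$, so $\mathsf{Int}(\mathrm{LPH}_j(\mathcal{X}),\mathrm{LPH}_j(\mathcal{Y})) \leq \epsilon = 2\delta$, and letting $\delta \downarrow \mathsf{GH}_k(\mathcal{X},\mathcal{Y})$ yields the theorem.
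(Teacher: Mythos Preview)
Your proof is correct and follows essentially the same route as the paper: glue the maps $\phi_i$ into maps $\Phi_p$ on each union $\cup_{i\in p}X_i$, use the pairwise distortion bounds to obtain simplicial maps between Vietoris--Rips complexes, and verify all required squares and interleaving triangles via contiguity. The only cosmetic differences are that the paper invokes Lemma~\ref{lem:optimal_correspondences} to work directly at $\epsilon=2\,\mathsf{GH}_k$ rather than approximating and passing to a limit, and that it fixes a specific minimal-index gluing $\phi_p|_{X_i\setminus\cup_{j<i}X_j}=\phi_i$ instead of your (arguably cleaner) device of taking arbitrary selection functions and proving the induced homology maps are choice-independent via contiguity.
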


This generalizes the famous Gromov-Hausdorff stability result of~\cite{chazal2009gromov}, which is recovered from our result by setting $k=1$ (in which case labeled metric spaces, labeled Gromov-Hausdorff distance, and labeled persistent homology modules all reduce to their classical versions). Our proof technique is based on proofs of similar results, e.g.,~\cite[Proposition 15]{chowdhury2018functorial},~\cite[Theorem 5.10]{memoli2019primer},~\cite[Theorem 5]{needham2024stability}. The only technical difference is the need to more carefully construct certain maps between Vietoris-Rips complexes, using the strategy of the proof of Theorem \ref{thm:k_GH}.

\begin{proof}
Let $\epsilon=2 \cdot \mathsf{GH}_k(\mathcal{X}, \mathcal{Y})$. 
To prove the theorem, we will show the existence of an $\epsilon$-interleaving between $\mathrm{LPH}_j(\mathcal{X})$ and $\mathrm{LPH}_j(\mathcal{Y})$.
We do so by constructing maps $\phi_{r,p}\colon \cup_{i\in p} X_i \to \cup_{i\in p} Y_i$ and $\psi_{r,p}\colon \cup_{i\in p} Y_i \to \cup_{i\in p} X_i$, for each $(r,p) \in \R \times P_k$, fitting in to the following diagrams on Vietoris-Rips complexes: 
\[
\resizebox{15cm}{!}{
\begin{tikzcd}
{\VR_r(\cup_{i\in p} X_i)} \arrow[rd, "{\phi_{r,p}}"] \arrow[rr, "\mathrm{inc}"] &                                                                                      & {\VR_{r'}(\cup_{i\in p'} X_i)} \arrow[rd, "{\phi_{r',p'}}"] &                      &                                                                                     & {\VR_{r+\epsilon}(\cup_{i\in p} X_i)} \arrow[rr, "\mathrm{inc}"] &                                         & {\VR_{r'+\epsilon}(\cup_{i\in p'} X_i)} \\
 & {\VR_{r+\epsilon}(\cup_{i\in p} Y_i)} \arrow[rr, "\mathrm{inc}"] &                                        & {\VR_{r'+\epsilon}(\cup_{i\in p'} Y_i)} & {\VR_r(\cup_{i\in p} Y_i)} \arrow[ru, "{\psi_{r,p}}"] \arrow[rr, "\mathrm{inc}"] &                                                                                   & {\VR_{r'}(\cup_{i\in p'}Y_i)} \arrow[ru, "{\psi_{r',p'}}"] &                     
\end{tikzcd}

}\]
and
\[
\resizebox{15cm}{!}{
\begin{tikzcd}
{\VR_r(\cup_{i\in p} X_i)} \arrow[rr, "\mathrm{inc}"] \arrow[rd, "{\phi_{r,p}}"] &                                                           & {\VR_{r+2\epsilon}(\cup_{i\in p} X_i)} &                                                                                              & {\VR_{r+\epsilon}(\cup_{i\in p} X_i)} \arrow[rd, "{\phi_{r+\epsilon, p}}"] &                       \\
  & {\VR_{r+\epsilon}(\cup_{i\in p} Y_i)} \arrow[ru, "{\psi_{r+\epsilon, p}}"] &                      & {\VR_r(\cup_{i\in p} Y_i)} \arrow[ru, "{\psi_{r,p}}"] \arrow[rr, ""] &                                                          & {\VR_{r+2\epsilon}(\cup_{i\in p} Y_i)}
\end{tikzcd}

},\]
where $\mathrm{inc}$ denotes the corresponding inclusion maps. We then show that these diagrams commute ``up to homotopy'' (to be defined precisely later), which then implies the commutativity of the corresponding diagrams on homology,  completing the proof.

By Lemma~\ref{lem:optimal_correspondences}, there are tuples of maps $(\phi_i\colon X_i \to Y_i)_i$ and $(\psi_i \colon Y_i \to X_i)_i$ that realize $\mathsf{GH}_k(\mathcal{X}, \mathcal{Y})$.
That is, for all $i,j$, we have that $\dis(\phi_i,\phi_j)\leq \epsilon$,  $\dis(\psi_i,\psi_j)\leq \epsilon$, and $\codis(\phi_i,\psi_j)\leq \epsilon$.
Using the strategy of the proof of Theorem \ref{thm:k_GH}, define a map $\phi_p\colon \cup_{i\in p} X_i \to \cup_{i\in p} Y_i$ by $\phi_p|_{X_i\backslash\cup_{j<i}{X_j}}=\phi_i$, and similarly, a map $\psi_p\colon \cup_{i\in p} Y_i \to \cup_{i\in p} X_i$ by $\psi_p|_{Y_i\backslash\cup_{j<i}{Y_j}}=\psi_i$.
By construction,  $\dis(\phi_p)\leq \epsilon$, $\dis(\psi_p)\leq \epsilon$, and $\codis(\phi_p, \psi_p)\leq \epsilon$.
We define the maps $\phi_{r,p}$ and $\psi_{r,p}$ in the diagrams above to be the maps induced by $\phi_p, \psi_p$ on Vietoris-Rips complexes.
These are indeed well defined since for any $x_1, x_2\in \cup_{i\in p} X_i$ we have that $d_{Y}(\phi_p(x_1),\phi_p(x_2))\leq d_{X}(x_1,x_2)+\epsilon$ and, similarly, for $y_1, y_2\in \cup_{i\in p} Y_i$ we have $d_{X}(\psi_p(y_1),\psi_p(y_2))\leq d_{Y}(y_1,y_2)+\epsilon$.

Recall that the maps $f,g\colon K \to L$ between simplicial complexes $K$ and $L$ are said to be \textbf{contiguous} if for every $\sigma\in K$ the set $f(\sigma)\cup g(\sigma)$ is a simplex in $L$. Contiguous maps induce homotopic maps on geometric realizations $|K|$ and $|L|$~\cite[Section 3.5]{spanier2012algebraic} and equal maps on homology~\cite[Theorem 12.5]{munkres2018elements}. Thus, the proof is complete once we show that the following maps are contiguous:
\begin{itemize}
    \item[(1)] $\mathrm{inc}\circ \phi_{r,p}$ and $ \phi_{r',p'}\circ \mathrm{inc}$
    \item[(2)] $\mathrm{inc}\circ \psi_{r,p}$ and $ \psi_{r',p'}\circ \mathrm{inc}$
    \item[(3)]  $\mathrm{inc}$ and $\psi_{r+\epsilon,p}\circ \phi_{r,p}$
    \item[(4)] $\mathrm{inc}$ and $\phi_{r+\epsilon,p}\circ \psi_{r,p}$
\end{itemize}
We will show contiguity for (1) and (3), and (2) and (4) are done analogously.

Consider $\sigma \in \VR_r(\cup_{i\in p} X_i)$ and $x_1,x_2 \in \sigma$, not necessarily distinct. 
To see that the maps in (1) are contiguous, we want to show that $d_Y(\mathrm{inc}\circ \phi_{r,p}(x_1), \phi_{r',p'}\circ \mathrm{inc}(x_2))\leq r'+\epsilon$.
Since $\mathrm{diam}(\sigma) \leq r$, we also have that $d_X(x_1, x_2)\leq r$.
By construction, since $p\preccurlyeq p'$ and since $x_1, x_2 \in \cup_{i\in p} X_i$, we have that $\phi_{p'}(x_i)=\phi_{p}(x_i)$, $i=1,2$.
Since $\dis(\phi_p)\leq \epsilon$, we have that $|d_X(x_1, x_2) - d_Y(\phi_{p}(x_1), \phi_{p'}(x_2))|\leq \epsilon$, which implies
\[d_Y(\mathrm{inc}\circ \phi_{r,p}(x_1), \phi_{r',p'}\circ \mathrm{inc}(x_2))=d_Y(\phi_{p}(x_1), \phi_{p'}(x_2))< d_X(x_1, x_2) +  \epsilon=r+\epsilon \leq r'+\epsilon \,.\]

To see that the maps in (3) are contiguous, we want to show that $d_X(\mathrm{inc}(x_1), \psi_{r+\epsilon,p}\circ \phi_{r,p}(x_2)) \leq r+2\epsilon$.
Since $\codis(\psi_p, \phi_p) \leq \epsilon$, we have that
\begin{align*}
d_X(\mathrm{inc}(x_1), \psi_{r+\epsilon,p}\circ \phi_{r,p}(x_2))&=d_X(x_1, \psi_p\circ \phi_p(x_2)) \\
&\leq \epsilon + d_Y(\phi_p(x_1), \phi_p(x_2)) \\
&\leq  2\epsilon + d_X(x_1, x_2) \leq 2\epsilon + r\,, 
\end{align*}
where the second inequality follows from $\dis(\phi_p)\leq \epsilon$.
\end{proof}

\subsection{Stability for Permuted Labels}

We now work toward generalizing Theorem \ref{thm:stability registered} to the more generalized settings of unregistered labeled metric spaces considered in \S\ref{sec:GH distance for labeled}. First consider the case of permuted $k$-labeled metric spaces and the Gromov-Hausdorff distance modulo relabeling,  $\mathsf{GH}_{S_k}$. Our strategy is to define an associated interleaving distance and show that it is stable with respect to $\mathsf{GH}_{S_k}$. 

The symmetric group $S_k$ acts on $\mathrm{Mod}_{\R \times P_k}$ as follows. First define an action on $P_k$: for $p = \{p_1,\ldots,p_\ell\} \in P_k$ and $\sigma \in S_k$, define
\[
\sigma \cdot p \coloneqq \{\sigma(p_1),\ldots,\sigma(p_\ell)\} \in P_k.
\]
Now, given $V \in \mathrm{Mod}_{\R \times P_k}$ and $\sigma \in S_k$, define $\sigma \cdot V$ by 
\[
\sigma \cdot V (r,p) \coloneqq V(r, \sigma \cdot p).
\]

\begin{proposition}
    The $S_k$-action on $\mathrm{Mod}_{\R \times P_k}$ defined above is by isometries.
\end{proposition}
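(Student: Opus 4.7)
The plan is to exhibit the $S_k$-action as pullback by an automorphism of the poset $\R \times P_k$ that commutes with the $\epsilon$-shift maps, from which the isometry property follows formally.

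First, I would verify that the action of $S_k$ on $P_k$ is by poset automorphisms. Given $p,p' \in P_k$, we have $p \preccurlyeq p'$ if and only if $p \subseteq p'$, and this is clearly equivalent to $\sigma \cdot p \subseteq \sigma \cdot p'$, i.e., $\sigma \cdot p \preccurlyeq \sigma \cdot p'$. Extending this trivially on the $\R$-factor, we obtain a poset automorphism of $\R \times P_k$, which (by abuse of notation) I will also call $\sigma$. Then the action defined in the text coincides with pullback along this automorphism: $\sigma \cdot V = V \circ \sigma$. In particular, a morphism $F \colon V \to V'$ in $\mathrm{Mod}_{\R \times P_k}$ induces a morphism $\sigma \cdot F \colon \sigma \cdot V \to \sigma \cdot V'$ by $(\sigma \cdot F)_{r,p} \coloneqq F_{r,\sigma \cdot p}$, and all naturality squares transport over automatically.

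Second, I would observe that $\sigma$ commutes with the $\epsilon$-shift on $\R \times P_k$, since the shift acts only on the $\R$-coordinate while $\sigma$ acts only on the $P_k$-coordinate. It follows that
\[
(\sigma \cdot V)_\epsilon(r,p) = (\sigma \cdot V)(r+\epsilon,p) = V(r+\epsilon,\sigma \cdot p) = V_\epsilon(r,\sigma \cdot p) = \sigma \cdot (V_\epsilon)(r,p),
\]
so $(\sigma \cdot V)_\epsilon = \sigma \cdot (V_\epsilon)$ as $(\R \times P_k)$-modules.

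Third, given an $\epsilon$-interleaving between $V$ and $V'$ realized by morphisms $\phi \colon V \to V'_\epsilon$ and $\psi \colon V' \to V_\epsilon$, I would check that $\sigma \cdot \phi \colon \sigma \cdot V \to \sigma \cdot V'_\epsilon = (\sigma \cdot V')_\epsilon$ and $\sigma \cdot \psi \colon \sigma \cdot V' \to (\sigma \cdot V)_\epsilon$ form an $\epsilon$-interleaving between $\sigma \cdot V$ and $\sigma \cdot V'$. The commuting triangles at $(r,p)$ for the pulled-back interleaving are exactly the original triangles evaluated at $(r,\sigma \cdot p)$, which commute by hypothesis. Hence $\sigma$ takes $\epsilon$-interleavings to $\epsilon$-interleavings. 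Applying the same reasoning to $\sigma^{-1}$ yields the reverse implication, so $V$ and $V'$ are $\epsilon$-interleaved if and only if $\sigma \cdot V$ and $\sigma \cdot V'$ are, giving $\mathsf{Int}(\sigma \cdot V, \sigma \cdot V') = \mathsf{Int}(V,V')$.

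There is no real obstacle here, as the result is formal once one recognizes that the action is by pullback along a poset automorphism that fixes the $\R$-factor; the only thing requiring care is the notational bookkeeping to verify that interleaving diagrams transport correctly along pullback.
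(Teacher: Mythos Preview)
Your proof is correct and follows essentially the same approach as the paper: define the induced action on morphisms by $(\sigma \cdot \phi)_{r,p} = \phi_{r,\sigma \cdot p}$, observe that this takes $\epsilon$-interleavings to $\epsilon$-interleavings, and conclude. Your version is more explicit—you spell out that $\sigma$ is a poset automorphism commuting with the shift, and you invoke $\sigma^{-1}$ for the reverse inequality—whereas the paper simply asserts that the transported morphisms form an interleaving and that the equality of interleaving distances ``follows easily.''
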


\begin{proof}
    Given a module morphism $\phi:V \to V'$ and $\sigma \in S_k$, we get an induced module morphism $\sigma \cdot \phi: \sigma \cdot V \to \sigma \cdot V'$ defined by 
    \[
    (\sigma \cdot \phi)_p \coloneqq \phi_{\sigma \cdot p}.
    \]
    One can then check that if morphisms $\phi,\psi$ define an $\epsilon$-interleaving of $V$ and $V'$, then $\sigma \cdot \phi, \sigma \cdot \psi$ define an $\epsilon$-interleaving of $\sigma \cdot V$ and $\sigma \cdot V'$. It follows easily that 
    \[
    \mathsf{Int}(\sigma \cdot V, \sigma \cdot V') = \mathsf{Int}(V,V').
    \]
\end{proof}

It follows from the previous result that the following notion of distance between $(\R \times P_k)$-modules is well-defined.

\begin{definition}[Interleaving Distance for Permuted Classes]
    The \define{interleaving distance modulo relabeling} between $(\R \times P_k)$-modules $V$ and $V'$ is 
    \[
    \mathsf{Int}_{S_k}(V,V') \coloneqq \min_{\sigma \in S_k} \mathsf{Int}(\sigma \cdot V, V').
    \]
\end{definition}

As a corollary of Theorem \ref{thm:stability registered}, we obtain a stability result for labeled persistent homology for permuted classes.

\begin{corollary}[Gromov-Hausdorff Stability for Permuted Classes]\label{cor:stability_unregistered_classes}
    Let $\mathcal{X}$ and $\mathcal{Y}$ be $k$-labeled metric spaces and let $j$ be a nonnegative integer. Then
  \[
    \mathsf{Int}_{S_k}(\mathrm{LPH}_j(\mathcal{X}),\mathrm{LPH}_j(\mathcal{Y}))\leq 2 \cdot  \mathsf{GH}_{S_k}(\mathcal{X},\mathcal{Y}).
    \]
\end{corollary}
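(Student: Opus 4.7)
The plan is to reduce the permuted case directly to Theorem \ref{thm:stability registered} via a compatibility between the $S_k$-action on labeled metric spaces defined in \eqref{eqn:symmetric_action} and the $S_k$-action on $(\R \times P_k)$-modules defined just before the corollary. First I would verify that the functor $\mathrm{LPH}_j$ is $S_k$-equivariant, in the sense that
\[
\mathrm{LPH}_j(\sigma \cdot \mathcal{X}) = \sigma \cdot \mathrm{LPH}_j(\mathcal{X})
\]
for every $\sigma \in S_k$ and every $k$-labeled metric space $\mathcal{X}$. This is a direct unwinding of definitions: the $i$-th label set of $\sigma \cdot \mathcal{X}$ is $X_{\sigma(i)}$, so
\[
\bigcup_{i \in p} (\sigma \cdot \mathcal{X})_i = \bigcup_{i \in p} X_{\sigma(i)} = \bigcup_{j \in \sigma \cdot p} X_j,
\]
and hence $\mathrm{LPH}_j(\sigma \cdot \mathcal{X})(r,p) = H_j(\VR_r(\cup_{j \in \sigma \cdot p} X_j)) = \mathrm{LPH}_j(\mathcal{X})(r, \sigma \cdot p) = (\sigma \cdot \mathrm{LPH}_j(\mathcal{X}))(r,p)$, with structure maps inherited from those of $\mathrm{LPH}_j(\mathcal{X})$.

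Next I would apply Theorem \ref{thm:stability registered} to the pair $(\sigma \cdot \mathcal{X}, \mathcal{Y})$, which are both $k$-labeled metric spaces, obtaining for every $\sigma \in S_k$ the bound
\[
\mathsf{Int}(\sigma \cdot \mathrm{LPH}_j(\mathcal{X}), \mathrm{LPH}_j(\mathcal{Y})) = \mathsf{Int}(\mathrm{LPH}_j(\sigma \cdot \mathcal{X}), \mathrm{LPH}_j(\mathcal{Y})) \leq 2 \cdot \mathsf{GH}_k(\sigma \cdot \mathcal{X}, \mathcal{Y}).
\]
Taking the minimum over $\sigma \in S_k$ on both sides and invoking the definitions of $\mathsf{Int}_{S_k}$ and $\mathsf{GH}_{S_k}$ yields the desired inequality.

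There is essentially no obstacle here; the only thing to be careful about is ensuring the direction of the $S_k$-action matches on both sides—i.e., confirming that the equivariance identity uses the same convention ($\sigma \cdot p = \{\sigma(p_1), \ldots, \sigma(p_\ell)\}$) on the poset $P_k$ as is used in the action on $\mathrm{Mod}_{\R \times P_k}$. Once this bookkeeping is settled, the corollary drops out in one line from Theorem \ref{thm:stability registered} and the fact that taking a minimum preserves inequalities.
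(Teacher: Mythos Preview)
Your proposal is correct and follows essentially the same approach as the paper's proof: establish the $S_k$-equivariance $\mathrm{LPH}_j(\sigma \cdot \mathcal{X}) = \sigma \cdot \mathrm{LPH}_j(\mathcal{X})$, apply Theorem \ref{thm:stability registered} to the pair $(\sigma \cdot \mathcal{X}, \mathcal{Y})$, and minimize over $\sigma$. You actually give more detail than the paper does on the equivariance verification, which is a welcome addition.
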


\begin{proof}
    This follows easily from the observation that the labeled persistent homology map is equivariant with respect to the $S_k$-actions; that is,
    \[
    \mathrm{LPH}_j(\sigma \cdot \mathcal{X}) = \sigma \cdot \mathrm{LPH}_j(\mathcal{X}).
    \]
    From this, Theorem \ref{thm:stability registered} implies that, for any $\sigma \in S_k$,
    \[
    \mathsf{Int}(\sigma \cdot \mathrm{LPH}_j(\mathcal{X}), \mathrm{LPH}_j(\mathcal{Y})) = \mathsf{Int}( \mathrm{LPH}_j(\sigma \cdot\mathcal{X}), \mathrm{LPH}_j(\mathcal{Y})) \leq 2 \cdot \mathsf{GH}_k(\sigma \cdot \mathcal{X},\mathcal{Y}).
    \]
    The claim follows by minimizing over $S_k$.
\end{proof}

\subsection{Stability for Unregistered Labels}

Finally, we consider the most general situation of labeled metric spaces $\mathcal{X}$ and $\mathcal{Y}$ with unregistered labels, of potentially unequal size. In this setting, the relevant GH variant is the GH distance modulo stabilization, $\mathsf{GH}_{st}$. Following the idea of the previous subsection, we introduce a related variant of interleaving distance.

Suppose that $\mathcal{X}$ is a $k$-labeled metric space. Recall from Definition \ref{def:stabilization} that a $q$-stabilization $\widehat{\mathcal{X}}$ of $\mathcal{X}$ involves a surjective map of sets $\rho:[q] \to [k]$. Such a map induces a poset morphism $\widetilde{\rho}: P_q \to P_k$, with $\widetilde{\rho}(\{p_1,\ldots,p_\ell\}) = \{\rho(p_1),\ldots,\rho(p_\ell)\}$. In turn, given an $(\R \times P_q)$-module $V$, this poset morphism induces an $(\R \times P_k)$-module, denoted $V \circ \widetilde{\rho}$, defined by 
\[
(V \circ \widetilde{\rho})(r,p) = V(r,\widetilde{\rho}(p)).
\]

\begin{definition}
    Let $V$ be an $(\R \times P_k)$-module and let $q$ be an integer with $q \geq k$. A \define{$q$-stabilization of $V$} is an $(\R \times P_q)$-module of the form $V \circ \widetilde{\rho}$, for some surjective map $\rho:[q] \to [k]$. 

    Let $V'$ be an $(\R \times P_\ell)$-module. For $\epsilon \geq 0$, we say that $V$ and $V'$ are \define{stably $\epsilon$-interleaved} if there exist $q$-stabilizations of $V$ and $V'$ which are $\epsilon$-interleaved. The \define{interleaving distance modulo stabilization} between $V$ and $V'$ is
    \[
    \mathsf{Int}_{st}(V,V') \coloneqq \inf\{\epsilon \geq 0 \mid V,V' \mbox{ are stably $\epsilon$-interleaved}\}.
    \]
\end{definition}

We note that we can rephrase the definition of $\mathsf{Int}_{st}$ as
\[
\mathsf{Int}_{st}(V,V')=\inf_{\rho, \rho'} \mathsf{Int}(V\circ \widetilde{\rho}, V'\circ \widetilde{\rho'}),
\]
where the infimum is over maps $\rho:[q] \to [k]$ and $\rho':[q] \to [\ell]$, for all choices of $q$.

We define a category $\lim_{k} \mathrm{Mod}_{\R \times P_k}$ as the directed colimit of the family $(\mathrm{Mod}_{\R \times P_k})_{k \in \mathbb{Z}_{\geq 0}}$. Concretely, each object is an $(\R \times P_k)$-module, for some $k$. A morphism from $V \in \mathrm{Mod}_{\R \times P_k}$ to $V' \in \mathrm{Mod}_{\R \times P_{k'}}$ consists of $q$-stabilizations $V \circ \widetilde{\rho}$ and $V' \circ \widetilde{\rho'}$ and an $(\R \times P_{q})$-module morphism between them.

\begin{proposition}
    The stabilized interleaving distance $\mathsf{Int}_{st}$ defines an extended pseudometric on $\lim_{k} \mathrm{Mod}_{\R \times P_k}$.
\end{proposition}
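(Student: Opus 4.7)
The plan is to verify the four axioms of an extended pseudometric for $\mathsf{Int}_{st}$: non-negativity, symmetry, reflexivity, and the triangle inequality. Non-negativity is immediate from the definition as an infimum of non-negative quantities. Symmetry follows from the symmetry of the ordinary interleaving distance, since if $(V\circ\widetilde{\rho}, V'\circ\widetilde{\rho'})$ are $\epsilon$-interleaved, so are $(V'\circ\widetilde{\rho'}, V\circ\widetilde{\rho})$. Reflexivity ($\mathsf{Int}_{st}(V,V)=0$) follows by taking the identity stabilization $\rho=\mathrm{id}_{[k]}$ and noting that $V$ is trivially $0$-interleaved with itself.

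The substantive content is the triangle inequality. Let $V$, $V'$, $V''$ be objects over $\R\times P_k$, $\R\times P_\ell$, $\R\times P_m$ respectively, and suppose we have surjections $\rho_1\colon[q_1]\to[k]$, $\rho'_1\colon[q_1]\to[\ell]$ giving $q_1$-stabilizations with $V\circ\widetilde{\rho_1}$ and $V'\circ\widetilde{\rho'_1}$ $\epsilon_1$-interleaved, and surjections $\rho'_2\colon[q_2]\to[\ell]$, $\rho''_2\colon[q_2]\to[m]$ giving $q_2$-stabilizations with $V'\circ\widetilde{\rho'_2}$ and $V''\circ\widetilde{\rho''_2}$ $\epsilon_2$-interleaved. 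The key obstruction is that the two stabilizations of the middle module $V'$ sit over different posets $P_{q_1}$ and $P_{q_2}$, so the interleavings cannot be composed directly.

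To overcome this, I would take the fibered product
\begin{equation*}
[q] \coloneqq \{(i,j)\in [q_1]\times[q_2] \mid \rho'_1(i)=\rho'_2(j)\},
\end{equation*}
equipped with its two projections $\pi_1\colon[q]\to[q_1]$ and $\pi_2\colon[q]\to[q_2]$. Because $\rho'_1$ and $\rho'_2$ are both surjective onto $[\ell]$, the projections $\pi_1$ and $\pi_2$ are themselves surjective, and by construction $\rho'_1\circ\pi_1 = \rho'_2\circ\pi_2$. Consequently the two further stabilizations of $V'$ coincide: $V'\circ\widetilde{\rho'_1\circ\pi_1} = V'\circ\widetilde{\rho'_2\circ\pi_2}$ as $(\R\times P_q)$-modules. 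The argument then relies on two routine facts which I would state and briefly justify: (i) pulling back an $\epsilon$-interleaving along a surjection $\pi$ (that is, replacing $W$ by $W\circ\widetilde{\pi}$ and the interleaving morphisms $\phi_{r,p}$, $\psi_{r,p}$ by $\phi_{r,\widetilde{\pi}(p)}$, $\psi_{r,\widetilde{\pi}(p)}$) yields an $\epsilon$-interleaving of the pulled-back modules, and (ii) an $\epsilon_1$-interleaving and an $\epsilon_2$-interleaving between consecutive $P$-modules compose to give an $(\epsilon_1+\epsilon_2)$-interleaving, a standard property of interleavings in the framework of \cite{BubenikdeSilvaScott2015}.

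Combining these ingredients, the $q$-stabilizations $V\circ\widetilde{\rho_1\circ\pi_1}$ and $V''\circ\widetilde{\rho''_2\circ\pi_2}$ are $(\epsilon_1+\epsilon_2)$-interleaved, which gives $\mathsf{Int}_{st}(V,V'')\leq \epsilon_1+\epsilon_2$; infimizing over admissible $\epsilon_1,\epsilon_2$ completes the triangle inequality. The main obstacle throughout is conceptual rather than computational: recognizing that the fibered product is the correct common refinement for two independent stabilizations of $V'$. Once this construction is in place, the proof reduces to the functoriality of the $V\mapsto V\circ\widetilde{\rho}$ operation and the classical composition of interleavings.
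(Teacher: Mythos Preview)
Your proposal is correct. The treatment of non-negativity, symmetry, and reflexivity matches the paper's, and the two facts you invoke for the triangle inequality---that pulling back along a surjection preserves $\epsilon$-interleavings, and that interleavings compose additively---are exactly what the paper uses as well.

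The one genuine difference is in the choice of common refinement. The paper takes the full product $q_3 = q_1\cdot q_2$ with projections $\pi_1(i) = (i-1)\bmod q_1 + 1$ and $\pi_2(i) = (i-1)\bmod q_2 + 1$, pulls back both interleavings, and then appeals to the ordinary triangle inequality for $\mathsf{Int}$. Your fibered product $[q]=\{(i,j)\in[q_1]\times[q_2]\mid \rho'_1(i)=\rho'_2(j)\}$ is a strictly smaller set tailored so that the two pulled-back stabilizations of the middle module $V'$ coincide \emph{on the nose}: $V'\circ\widetilde{\rho'_1\circ\pi_1} = V'\circ\widetilde{\rho'_2\circ\pi_2}$. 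This makes the composition step completely transparent. In the paper's construction the two intermediate modules $V'\circ\widetilde{\rho_{1,2}\circ\pi_1}$ and $V'\circ\widetilde{\rho_{2,1}\circ\pi_2}$ are in general distinct $(\R\times P_{q_3})$-modules, so invoking the triangle inequality for $\mathsf{Int}$ there leaves an implicit identification that your approach avoids entirely. Your argument is therefore cleaner and more self-contained; the paper's has the minor advantage of giving an explicit cardinality for the common refinement.
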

\begin{proof}
    It is clear from the definition that $\mathsf{Int}_{st}$ is non-negative and symmetric.
    It is left to show the triangle inequality. 
    For that, we will show that if $V \in \mathrm{Mod}_{\R \times P_k}$ and $V' \in \mathrm{Mod}_{\R \times P_{k'}}$ are stably $\epsilon_1$-interleaved and $V'$ and $V'' \in \mathrm{Mod}_{\R \times P_{k''}}$ are stably $\epsilon_2$-interleaved, then $V$ and $V''$ are stably $(\epsilon_1+\epsilon_2)$-interleaved, from which the claim follows.
    
    Suppose $\rho_{1,1}\colon [q_1]\to [k]\,, \rho_{1,2}\colon [q_1]\to [k']$ are such that $V\circ \widetilde{\rho_{1,1}}$ and $V'\circ \widetilde{\rho_{1,2}}$ are $\epsilon_1$-interleaved, and $\rho_{2,1}\colon [q_2]\to [k']\,, \rho_{2,2}\colon [q_2]\to [k'']$ are such that $V'\circ \widetilde{\rho_{2,1}}$ and $V''\circ \widetilde{\rho_{2,2}}$ are $\epsilon_2$-interleaved.
    For $q_3=q_1 \cdot q_2$, we construct maps $\rho\colon [q_3] \to [k]\,, \rho''\colon [q_3] \to [k'']$ as follows.
     First, let $\pi_1\colon [q_3] \to [q_1]$ and $\pi_2\colon [q_3] \to [q_1]$ be defined, respectively, by $\pi_1(i)= (i-1)\, \mathrm{mod}\, q_1 + 1$ and $\pi_2(i)= (i-1)\, \mathrm{mod}\,q_2 + 1$; finally, set $\rho = \rho_{1,1} \circ \pi_1$ and $\rho'' = \rho_{2,2} \circ \pi_2$.
    Since $V\circ \widetilde{\rho_{1,1}}$ and $V'\circ \widetilde{\rho_{1,2}}$ are $\epsilon_1$-interleaved, we have that $V\circ \widetilde{\rho}$ and $V'\circ \widetilde{\rho_{1,2}\circ \pi_1}$ are also $\epsilon_1$-interleaved.
    Similarly, we have that $V'\circ \widetilde{\rho_{2,1}\circ \pi_2}$ and $V''\circ \widetilde{\rho''}$ are  $\epsilon_2$-interleaved.
It follows from the triangle inequality of the (usual, not stabilized) interleaving distance  (see~\cite[Prop.~3.11]{BubenikdeSilvaScott2015}) that $V\circ \widetilde{\rho}$ and $V''\circ \widetilde{\rho''}$ are $(\epsilon_1+\epsilon_2)$-interleaved. 
\end{proof}

We conclude with a final corollary of Theorem \ref{thm:stability registered}, which applies in the setting of GH distance modulo stabilizations.

\begin{corollary}[Gromov-Hausdorff Stability Modulo Stabilizations]\label{cor:stability_modulo_stabilizations}
    Let $\mathcal{X}$ and $\mathcal{Y}$ be  labeled metric spaces, with unregistered labels of potentially different sizes, and let $j$ be a nonnegative integer. Then
  \[
    \mathsf{Int}_{st}(\mathrm{LPH}_j(\mathcal{X}),\mathrm{LPH}_j(\mathcal{Y}))\leq 2 \cdot  \mathsf{GH}_{st}(\mathcal{X},\mathcal{Y}).
    \]
\end{corollary}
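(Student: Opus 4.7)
The plan is to mirror the argument of Corollary \ref{cor:stability_unregistered_classes}: establish that the labeled persistent homology functor is equivariant with respect to the notion of stabilization on both sides, and then reduce to the registered stability theorem on matched stabilizations.

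The key observation to set up first is that, for a $k$-labeled metric space $\mathcal{X}$ with $q$-stabilization $\widehat{\mathcal{X}}$ associated to a surjection $\rho\colon [q] \to [k]$, the module $\mathrm{LPH}_j(\widehat{\mathcal{X}})$ is exactly the $q$-stabilization $\mathrm{LPH}_j(\mathcal{X}) \circ \widetilde{\rho}$ of $\mathrm{LPH}_j(\mathcal{X})$. This follows directly from Definitions \ref{def:stabilization} and \ref{def:labeled_vietoris_rips}: for any $(r,p) \in \R \times P_q$, the identity $\widehat{X}_i = X_{\rho(i)}$ gives
\[
\mathrm{LPH}_j(\widehat{\mathcal{X}})(r,p) = \mathrm{H}_j\bigl(\mathrm{VR}_r\bigl(\textstyle\bigcup_{i \in p} \widehat{X}_i\bigr)\bigr) = \mathrm{H}_j\bigl(\mathrm{VR}_r\bigl(\textstyle\bigcup_{j \in \widetilde{\rho}(p)} X_j\bigr)\bigr) = \mathrm{LPH}_j(\mathcal{X})(r,\widetilde{\rho}(p)),
\]
and the induced maps agree on the nose because both are induced by the same Vietoris-Rips inclusions. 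One should also note that this identification is compatible with morphisms of modules, which is tautological at the level of vector spaces.

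With this in hand, the rest of the argument is brief. Let $\epsilon > 2\cdot \mathsf{GH}_{st}(\mathcal{X},\mathcal{Y})$. By the definition of $\mathsf{GH}_{st}$, there exists some integer $q$ and $q$-stabilizations $\widehat{\mathcal{X}}$ and $\widehat{\mathcal{Y}}$, with associated surjections $\rho_X\colon [q]\to [k]$ and $\rho_Y\colon [q] \to [\ell]$, such that $2\cdot \mathsf{GH}_q(\widehat{\mathcal{X}},\widehat{\mathcal{Y}}) < \epsilon$. Theorem \ref{thm:stability registered} then yields
\[
\mathsf{Int}\bigl(\mathrm{LPH}_j(\widehat{\mathcal{X}}),\mathrm{LPH}_j(\widehat{\mathcal{Y}})\bigr) \leq 2\cdot \mathsf{GH}_q(\widehat{\mathcal{X}},\widehat{\mathcal{Y}}) < \epsilon.
\]
Using the equivariance established above, this says exactly that $\mathrm{LPH}_j(\mathcal{X})\circ\widetilde{\rho_X}$ and $\mathrm{LPH}_j(\mathcal{Y})\circ \widetilde{\rho_Y}$ are $\epsilon$-interleaved. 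By definition of $\mathsf{Int}_{st}$, this means $\mathsf{Int}_{st}(\mathrm{LPH}_j(\mathcal{X}),\mathrm{LPH}_j(\mathcal{Y})) \leq \epsilon$, and letting $\epsilon \downarrow 2\cdot\mathsf{GH}_{st}(\mathcal{X},\mathcal{Y})$ finishes the proof.

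I do not anticipate a real obstacle here: the only substantive step is verifying that $\mathrm{LPH}_j$ intertwines labeled-metric-space stabilization with module-level stabilization, and this is essentially bookkeeping on the defining formulas. Everything else is a direct appeal to Theorem \ref{thm:stability registered} followed by infimizing over the choice of matched stabilizations, exactly as in the proof of Corollary \ref{cor:stability_unregistered_classes}.
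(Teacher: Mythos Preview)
Your proposal is correct and follows essentially the same approach as the paper: establish the equivariance $\mathrm{LPH}_j(\widehat{\mathcal{X}}) = \mathrm{LPH}_j(\mathcal{X})\circ\widetilde{\rho}$, apply Theorem~\ref{thm:stability registered} to matched $q$-stabilizations, and then infimize. The only cosmetic difference is that you phrase the final step via an $\epsilon > 2\cdot\mathsf{GH}_{st}$ argument, while the paper states the chain of inequalities directly and infimizes over stabilizations; these are equivalent.
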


\begin{proof}
    Let $\mathcal{X}$ be a $k$-labeled metric space and $\mathcal{Y}$ an $\ell$-labeled metric space.
    Similar to the proof of Corollary \ref{cor:stability_unregistered_classes}, this boils down to an equivariance property: if $\widehat{\mathcal{X}}$ is a $q$-stabilization, with associated surjective map $\rho:[q] \to [k]$, then 
    \[
    \mathrm{LPH}_j(\mathcal{X}) \circ \widetilde{\rho} = \mathrm{LPH}_j(\widehat{\mathcal{X}}).
    \]
    The result then follows from the definitions of $\mathsf{Int}_{st}$ and $\mathsf{GH}_{st}$, and from Theorem~\ref{thm:stability registered}. Indeed, for stabilizations $\widehat{\mathcal{X}}$, with map $\rho\colon [q] \to [k]$, and $\widehat{\mathcal{Y}}$, with map $\rho'\colon [q] \to [\ell]$, we have 
    \[
    \mathsf{Int}(\mathrm{LPH}_j(\mathcal{X}) \circ \widetilde{\rho},\mathrm{LPH}_j(\mathcal{Y}) \circ \widetilde{\rho}') = \mathsf{Int}(\mathrm{LPH}_j(\widehat{\mathcal{X}}),\mathrm{LPH}_j(\widehat{\mathcal{Y}}) )  \leq 2 \cdot  \mathsf{GH}_{q}(\widehat{\mathcal{X}}, \widehat{\mathcal{Y}}).
    \]
    The corollary follows by infimizing over stabilizations.
\end{proof}

\subsection{Comparison to Prior Work}

We mention here two related results. Each estimates the bottleneck distance between single-parameter persistence modules also associated to an inclusion or mapping structure.

In \cite{CSEHM}, the authors prove a stability result for image persistence that bounds the difference in the bottleneck distance of the image, kernel, and cokernel persistence modules of an inclusion map for two different sub-level set filtrations on the same underlying topological space $X$; the upper bound is the sup-norm distance between the filtrations. This result could potentially be used to derive stability for mixup barcodes when comparing different filtrations on the same metric space.

The recent work of Draganov et al.~\cite{draganov2025gromov} proves a more general stability result for six different persistence modules derived from a map between two chromatic metric pairs (image, kernel, and cokernel, as in \cite{CSEHM}, as well as three others); see \S\ref{ss:chromatic}. The authors bound the bottleneck distance between the modules---which can now differ in underlying metric space in addition to filtration---using their $\Sigma$-constrained Gromov-Hausdorff distance. Therefore, although our Gromov-Hausdorff distance defined in \S\ref{sec:GH distance for labeled} is a generalization of the $\Sigma$-constrained Gromov-Hausdorff distance, in the sense of Proposition \ref{prop:chromatic}, our stability result is not a generalization of theirs. Their result applies to more general maps, more general subspaces and quotient spaces derived from those maps, and more general filtrations than ours does.

Note that in Example \ref{ex:mixupL} we explain how to recover the landscape of the image persistence module for the inclusion map using our methods, so the above results for the image persistence module (and using the Vietoris-Rips filtration) are a possible site of overlap. However, while the bottleneck distance equals the interleaving distance for single-parameter persistence, our interleaving distance compares $(\R\times P)$-modules rather than $\R$-modules, so it is not immediately obvious how to use the result of \ref{ss:chromatic} to prove ours, or vice versa.

\section{Landscape Representations of Labeled Datasets}\label{sec:landscapes}

This section introduces a more computationally-friendly approach to working with the $(\R \times P_k)$-modules that we studied theoretically in the previous subsection. These are based on the \textit{persistence landscape} construction of Bubenik~\cite{Bubenik15}. We treat theoretical aspects of our generalization in this section. 

\subsection{Generalized Persistence Landscapes}

To use persistent homology techniques in practice on real data, one must extract computationally convenient statistics from the somewhat unwieldy module structures. First consider the setting of \textit{single parameter persistence}, where the poset $P$ in Definition \ref{def:P-module} is $\R$ (with the usual ordering). Under very mild conditions, a single parameter persistence module $V:\R \to \mathrm{Vect}_\mathbb{F}$ is  characterized up to isomorphism by an invariant called a \textit{barcode} or \textit{persistence diagram}~\cite{crawley2015decomposition}. Indeed, these representations of one-parameter modules---which essentially encode the summands of a module when it is expressed as a direct sum of indecomposable modules---are perhaps the most familiar objects in topological data analysis. Unfortunately, for more general choices of posets $P$, complete invariants of this form for $P$-modules do not exist~\cite{gabriel1972unzerlegbare,carlsson2007theory}. 

On the other hand, a certain representation of a one-parameter persistence module, called a \textit{persistence landscape}~\cite{Bubenik15}, does generalize to provide an incomplete, but nonetheless useful, invariant of a general module. We now review the classical construction. Let $V:\R\to\mathrm{Vect}_\mathbb{F}$ be a single-parameter persistence module.  
Given $a\leq b\in\R$, the \textbf{Betti number} $\beta_V^{a,b}$ is the rank
\[
\beta_V^{a,b}\coloneqq\rank\big(V(a\leq b)\big).
\]
This leads to the following definition, first introduced in~\cite{Bubenik15}.

\begin{definition}[{\cite{Bubenik15}}]\label{def:1land}
The \textbf{persistence landscape} of a persistence module $V:\R\to\Vect_\mathbb{F}$ is the function $\lambda:\mathbb{Z}_{\geq 0}\times\R\to[0,+\infty]$, given by
\[
\lambda_V(n,r)\coloneqq\sup\{\epsilon\geq0\,|\,\beta_V^{r-\epsilon,r+\epsilon}\geq n\},
\]
where $\sup \emptyset \coloneqq 0$.
\end{definition}

The persistence landscape encodes the full rank information of the module, which is known to characterize the module up to isomorphism~\cite[Theorem 5]{carlsson2007theory}. This particular encoding embeds $\mathrm{Mod}_\R$ (or, rather, the subclass of modules with finite support) into a Banach space, and this embedding is Lipschitz stable with respect to interleaving distance~\cite[Theorem 13]{Bubenik15}.

In~\cite{Vipond20}, Vipond generalized the persistence landscape construction to the setting of $\R^d$-modules, where we consider $\R^d$ as a poset with the product order: $(x_1,\ldots,x_d) \preccurlyeq (y_1,\ldots,y_d)$ if and only if $x_i \leq y_i$ for all $i$. This generalized invariant is also shown to be Lipschitz stable~\cite[Theorem 30]{Vipond20}. We recall the definition below.

\begin{definition}[{\cite{Vipond20}}]\label{def:multiparameter_landscape}
The \textbf{multiparameter persistence landscape} of a persistence module $V:\R^d\to\Vect_\mathbb{F}$ is the function $\lambda:\mathbb{Z}_{\geq 0}\times\R^d\to[0,+\infty]$, given by
\[
\lambda_V(n,\mathbf{x})\coloneqq\sup\{\epsilon\geq0\,|\,\beta_V^{\mathbf{x}-\mathbf{h},\mathbf{x}+\mathbf{h}}\geq n \mbox{ for all $\mathbf{h}$ such that  $\mathbf{0} \preccurlyeq \mathbf{h}$ and $\|\mathbf{h}\|_\infty \leq \epsilon$}\}.
\]
Here, we use $\mathbf{x}$, $\mathbf{0}$ and $\mathbf{h}$ to denote elements of $\R^d$,  $\|\cdot\|_\infty$ denotes the standard $\ell_\infty$-norm on $\R^d$, and 
\[
\beta_V^{\mathbf{x}-\mathbf{h},\mathbf{x}+\mathbf{h}} \coloneqq \rank\big(V(\mathbf{x}-\mathbf{h} \preccurlyeq \mathbf{x} + \mathbf{h})\big).
\]
\end{definition}

From Vipond's forumulation, it becomes straightforward to generalize the landscape construction to persistence modules over more general choices of poset $P$. To this end, fix a poset $(P,\preccurlyeq)$. Let $V:P\to\mathrm{Vect}_\mathbb{F}$ be a $P$-module. For $a,b\in P$ with $a \preccurlyeq b$, we define the associated \define{Betti number}
\[
\beta_V^{a,b}\coloneqq\rank(V(a\preccurlyeq b)).
\]
To give a definition of a generalized landscape, we need a certain notion of distance on $P$. 

\begin{definition}
    An \textbf{extended quasimetric} on a set $X$ is a function $d\colon X\times X\to[0,+\infty]$ satisfying the metric  axioms:
    \begin{enumerate}[label=(\arabic*)]
        \item $d(x,y)\geq0$, and $d(x,y)=0$ if and only if $x=y$;
        \item $d(x,z)\leq d(x,y)+d(y,z)$.
    \end{enumerate}
\end{definition}

We provide relevant examples of extended quasimetrics on posets in the following subsection. Assuming $P$ has been endowed with such a structure, we arrive at our main definition.

\begin{definition}\label{def:2} The \textbf{$P$-landscape} or \textbf{generalized persistence landscape} of a $P$-module $V\colon P \to \mathrm{Vect}_\mathbb{F}$, with respect to an extended quasimetric $d$ on $P$, is the function $\lambda_V:\mathbb{Z}_{\geq 0} \times P\to[0,+\infty]$, given by
    \[
    \lambda_V(n,x)\coloneqq\sup\{\epsilon\geq0\;|\;\beta_V^{a,b}\geq n \mbox{ for all $a,b \in P$ satisfying } a\preccurlyeq x\preccurlyeq b, \; d(a,x) \leq\epsilon, \; d(x,b)\leq\epsilon\}.
    \]
\end{definition}

\begin{remark}
    When the underlying poset needs to be emphasized, we will use the notation $\lambda_V^P$, instead of $\lambda_V$. To avoid overloading notation, we do not indicate the dependency on the choice of extended quasimetric. In the following, this should always be clear from context.
\end{remark}

\begin{remark}\label{rmk:multiparameter_landscape}
    It is implicit in the name of this construction that this should generalize the multiparameter persistence landscape described above. This is not immediately clear from the definition, but we prove this implicit claim below in Proposition \ref{prop:mp=gp}. 
\end{remark}

\subsection{Examples of \texorpdfstring{$\R \times P$}{R x P}-Modules}

We now focus on modules of the form $\R \times P$, in light of our focus on these modules in \S\ref{sec:persistent_homology}. Fix a poset $(P,\preccurlyeq)$ (e.g., the poset $P_k$ consisting of the power set of $[k]$ with its inclusion order). As above, we abuse notation and use $\preccurlyeq$ for the product partial order on $\R \times P$. Below, we denote elements of $\R \times P$ using vector notation as $\mathbf{x} = (r,x)$, with $r \in \R$ and $x \in P$. We now provide examples of relevant extended quasimetrics on the poset $\R \times P$. The first example assumes the existence of a quasimetric on $P$; later we give specific examples of such structures.

\begin{example}[Product Quasimetrics]\label{ex:product_quasimetrics}
    Given an extended quasimetric $d_P$ on $P$, we can use standard metric product constructions to extend it to $\R \times P$. For example, the \define{sum (extended) quasimetric} is defined on $\bx = (r,x)$ and $\by = (s,y)$ as 
    \[
        (\bx, \by) \mapsto d_P(x, y) + |r - s|.
    \]
    Similarly, the \define{max (extended) quasimetric} is 
    \[
    (\bx, \by) \mapsto \max\{d_P(x, y),|r - s|\}.
    \]
\end{example}

We now show that the generalized persistence landscape really does generalize the multiparameter persistence landscape (see Remark \ref{rmk:multiparameter_landscape}). Let $V: \R^d \to \mathrm{Vect}_\mathbb{F}$. We use $\lambda_V$ for its multiparameter persistence landscape (Definition \ref{def:multiparameter_landscape}). On the other hand, setting $P = \R^{d-1}$, we can consider $V$ as an $(\R \times P)$-module. Taking the metric $d_P$ on $P$ to be the $\ell_\infty$-distance (i.e., sup-norm distance), we endow $\R \times P$ with the max quasimetric (i.e., sup-norm distance on $\R^d \approx \R \times \R^{d-1}$). We denote the associated generalized persistence landscape as $\lambda_V^{\R \times P}$. This yields the following equivalence.

\begin{prop}\label{prop:mp=gp}
    The multiparameter persistence landscape $\lambda_V$ and generalized persistence landscape $\lambda_V^{\R \times P}$ are equal.
\end{prop}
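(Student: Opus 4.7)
The plan is to show that the two supremum conditions in the definitions determine the same set of admissible $\epsilon$ at each point $(n, \mathbf{x})$, so the suprema must coincide. The only real content is translating between the symmetric ``ball'' neighborhoods $\mathbf{x} \pm \mathbf{h}$ used in Definition \ref{def:multiparameter_landscape} and the asymmetric endpoint pairs $(a,b)$ with $a \preccurlyeq \mathbf{x} \preccurlyeq b$ used in Definition \ref{def:2}.

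The easy direction is showing $\lambda_V(n,\mathbf{x}) \leq \lambda_V^{\R \times P}(n,\mathbf{x})$. Given any $\epsilon$ admissible for $\lambda_V^{\R \times P}(n,\mathbf{x})$, and any $\mathbf{h}$ with $\mathbf{0} \preccurlyeq \mathbf{h}$ and $\|\mathbf{h}\|_\infty \leq \epsilon$, simply set $a = \mathbf{x} - \mathbf{h}$, $b = \mathbf{x} + \mathbf{h}$; these satisfy $a \preccurlyeq \mathbf{x} \preccurlyeq \mathbf{b}$ with $\|a-\mathbf{x}\|_\infty, \|b-\mathbf{x}\|_\infty \leq \epsilon$, so $\beta_V^{\mathbf{x}-\mathbf{h}, \mathbf{x}+\mathbf{h}} = \beta_V^{a,b} \geq n$. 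Hence $\epsilon$ is also admissible for $\lambda_V(n,\mathbf{x})$.

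For the reverse direction $\lambda_V^{\R \times P}(n,\mathbf{x}) \leq \lambda_V(n,\mathbf{x})$, I would argue as follows. Let $\epsilon$ be admissible for $\lambda_V(n,\mathbf{x})$ and let $a, b \in \R^d$ satisfy $a \preccurlyeq \mathbf{x} \preccurlyeq b$ with $\|a-\mathbf{x}\|_\infty, \|b-\mathbf{x}\|_\infty \leq \epsilon$. Define
\[
\mathbf{h} \coloneqq \max(\mathbf{x} - a,\, b - \mathbf{x}) \in \R^d,
\]
where the maximum is taken componentwise. Then $\mathbf{0} \preccurlyeq \mathbf{h}$ and $\|\mathbf{h}\|_\infty \leq \epsilon$, and by construction $\mathbf{x} - \mathbf{h} \preccurlyeq a \preccurlyeq \mathbf{x} \preccurlyeq b \preccurlyeq \mathbf{x} + \mathbf{h}$. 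By functoriality of $V$, the structure map $V(\mathbf{x} - \mathbf{h} \preccurlyeq \mathbf{x} + \mathbf{h})$ factors as
\[
V(b \preccurlyeq \mathbf{x} + \mathbf{h}) \circ V(a \preccurlyeq b) \circ V(\mathbf{x} - \mathbf{h} \preccurlyeq a),
\]
whence $\beta_V^{\mathbf{x}-\mathbf{h}, \mathbf{x}+\mathbf{h}} \leq \beta_V^{a,b}$. Since $\epsilon$ is admissible for $\lambda_V(n,\mathbf{x})$, the left-hand rank is at least $n$, so $\beta_V^{a,b} \geq n$ as required.

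Combining the two inclusions of admissible sets gives equality of the suprema. I do not anticipate any serious obstacle: the only subtlety is recognizing that one must pass to an enclosing symmetric window $\mathbf{x} \pm \mathbf{h}$ rather than trying to relate the asymmetric $(a,b)$ pair directly, and then invoking functoriality to compare ranks. The componentwise maximum is exactly what makes the enclosing window tight enough to stay within the allowed $\ell_\infty$-budget $\epsilon$.
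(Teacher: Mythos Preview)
Your proof is correct and follows essentially the same approach as the paper: establish that the two admissible sets of $\epsilon$ coincide by sandwiching an arbitrary pair $(a,b)$ inside a symmetric window $\mathbf{x}\pm\mathbf{h}$ and invoking functoriality to compare ranks (you take $\mathbf{h}$ as a componentwise maximum, whereas the paper uses the constant vector with entries $\max(\|\mathbf{x}-a\|_\infty,\|\mathbf{x}-b\|_\infty)$; either choice works). One cosmetic point: your inequality labels are swapped --- the argument you call ``$\lambda_V \leq \lambda_V^{\R\times P}$'' actually shows that every $\epsilon$ admissible for $\lambda_V^{\R\times P}$ is admissible for $\lambda_V$, which yields $\lambda_V^{\R\times P}\leq\lambda_V$, and similarly the other paragraph proves the reverse inequality; since both inclusions are established, the conclusion is unaffected.
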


\begin{proof}
    First we argue that $\lambda_V^{\R \times P} \geq \lambda_V$. Fix $n$ and $\bx$, and suppose $\lambda_V(n, \bx) = \epsilon$. If $\epsilon = 0$, then, since $\lambda_V^{\R \times P}(n, \bx) \geq 0$, then the claim is trivially true. Suppose $\epsilon > 0$. Choose arbitrary $\ba, \bb$ such that $\ba \preccurlyeq \bx \preccurlyeq \bb$, $\|\bx - \ba\|_\infty < \epsilon$, and $\|\bb - \bx\|_\infty < \epsilon$. Let $h = \max (\|\bx - \ba\|_{\infty}, \|\bx -\bb\|_{\infty})$ and set $\bh \coloneqq (h,\dots, h) \in \R^d$. By the definition of $\bh$, we have that $\bx -\bh \preccurlyeq \ba$ and $\bb \preccurlyeq \bx + \bh$. Then, by the definition of a persistence module on a poset, we can factor $V(\bx -\bh\preccurlyeq\bx +\bh)$ as
    \[
    V(\bx -\bh) \rightarrow V(\ba) \rightarrow V(\bb) \rightarrow V(\bx +\bh).
    \]
    Because $\|h\|_\infty < \epsilon = \lambda_V(n, \bx)$, we know that the rank of $V(\bx-\bh \preccurlyeq \bx+\bh)$ is at least $n$, and therefore that the rank of $V(\ba \preccurlyeq \bb)$ as also at least $n$. This shows that $\lambda_V^{\R \times P}(n, \bx) \ge \lambda_V(n, \bx)$, and because $n$ and $\bx$ were arbitrary this also shows that $\lambda_V^{\R \times P} \ge \lambda_V$.

    Next, we argue that $\lambda_V^{\R \times P} \le \lambda_V$. As above, we fix $n$ and $\bx$, but now suppose $\lambda_V^{\R \times P}(n, \bx) = \epsilon$. We similarly can assume that $\epsilon > 0$. Consider any $\bh$ with $\bzero \preccurlyeq \bh$ and $\|\bh\|_{\infty}<\epsilon$. Then set $\ba\coloneqq \bx-\bh \preccurlyeq \bx \preccurlyeq \mathbf{b} \coloneqq \bx+\bh$, and observe that $\|\bx - \ba\|_{\infty} < \epsilon$ and  $\|\bx - \bb\|_{\infty} < \epsilon$. Thus, because $\ba$ and $\bb$ fit the definition of $\lambda_V^{\R \times P}$, the rank of $V(\bx - \bh \preccurlyeq \bx + \bh) = V(\ba\preccurlyeq\bb)$ is at least $n$. This shows that $\lambda_V(n, \bx) \ge \lambda_V^{\R \times P}(n, \bx)$, and again because $n$ and $\bx$ were arbitrary, we have that $\lambda_V \ge \lambda_V^{\R\times P}$ in general.
\end{proof}

Let us now describe some extended quasimetrics which are relevant to our setting. In particular, we are interested in the case where the poset $P$ is finite. 

\begin{example}[Geodesic and Ultrametric Distances]\label{ex:gud}
    Let $(P,\preccurlyeq)$ be a finite poset. We consider the \textit{Hasse diagram}~\cite{birkhoff1940lattice} of $P$ as a directed graph: the nodes of the graph are the elements of $P$ and there is a directed edge from $x$ to $y$ if $x \preccurlyeq y$ and there is no other point $z$, distinct from $x$ and $y$, such that $x \preccurlyeq z \preccurlyeq y$. A \define{weighting} of $P$ is a choice of nonnegative weight $w(e)$ for each directed edge $e$ in the Hasse diagram. For $x,y \in P$ with $x \preccurlyeq y$, a \define{path} from $x$ to $y$ is a sequence $f = (e_1,\ldots,e_\ell)$ of directed edges in the Hasse diagram such that the source of $e_1$ is $x$ and the target of $e_\ell$ is $y$; we use $\mathcal{P}_{x,y}$ to denote the set of all paths from $x$ to $y$. The set of paths may be empty even if $x \preccurlyeq y$ and, moreover, in the case that $x \preccurlyeq y$ does not hold, we set $\mathcal{P}_{x,y} = \emptyset$. 
    
    The weightings described above lead to two useful extended quasimetrics on $P$. First, we define the \define{geodesic distance} on $P$ to be the extended quasimetric 
    \[
    (x,y) \mapsto \inf_{f \in \mathcal{P}_{x,y}} \sum_{e \in f} w(e)
    \]
    (maintaining the convention that $\inf \emptyset = +\infty$). Similarly, we define the \define{ultrametric distance} on $P$ to be the extended quasimetric 
    \[
    (x,y) \mapsto \inf_{f \in \mathcal{P}_{x,y}} \max\{w(e) \mid e \in f\}.
    \]

    Given a choice of weighting, one obtains an extended quasimetric on $\R \times P$ by applying the product constructions of Example \ref{ex:product_quasimetrics} to the associated geodesic or ultrametric distance. Distances of this form will be used in theory and numerical experiments below.

    The choice of a weighting scheme is generally data-driven. Some possible choices are:
    \begin{enumerate}
        \item (Constant Weights) The weight for each directed edge in the Hasse diagram is chosen to be some constant $w$, which can be tuned in a supervised way for a given task.
        \item (Point Clouds) Suppose that $\mathcal{X} = (X,d_X,L_X)$ is a finite $k$-labeled metric space and consider the poset $P_k$. An edge in the Hasse diagram is given by an inclusion $Q \hookrightarrow Q'$ of subsets of $[k]$, and one can define the weight of this edge to be $\mathrm{diam}_{Q'}(\mathcal{X})$ (Definition \ref{def:q_diameter}). Such a choice is theoretically justified by Corollary \ref{cor:4GH}.
    \end{enumerate}
\end{example}

\begin{example}[$P$-landscapes and mixup barcodes]\label{ex:mixupL}
 The $P$-landscape generalizes the notion of image sub-bars from \cite{mixup}; see Example \ref{ex:mixup}. Given point clouds $X_1$ and $X_2$, we form the \textbf{image persistence module} by taking the persistence module whose vector spaces are images of the maps $\mathrm{inc}_*$ induced on homology of the Vietoris-Rips complex by the inclusion $X_1\subseteq X$ and whose linear maps are the restrictions of the linear maps for the $X$ module to the images of the $X_1$ module.\footnote{In the language of Example \ref{ex:mixup}, $\VR_i(X_1)=L_i$ and $\VR_i(X)=K_i$; the Vietoris-Rips filtraton is always finite, with maximum value given by the diameter of the set, and can be discretized by restricting to critical filtration values, i.e., those at which the Betti numbers change.} 
 Let $\lambda_\mathrm{im}$ denote the usual single-parameter landscape derived from the image persistence module. As noted in \cite[Observation~1]{mixup}, the image sub-bars equal the barcode of the image persistence module, which is itself equivalent to $\lambda_\mathrm{im}$. Therefore, in order to obtain the mixup barcode of $X_1\subseteq X=X_1\cup X_2$, it is enough to calculate $\lambda_\mathrm{im}$ along with the barcode and/or single parameter persistence landscape of $X_1$.

 We may compute $\lambda_\mathrm{im}$ from the $\R\times P$-landscape of $P=X_1\to X$ with $d_P(X_1,X)=0$. In this case we use the simplified notation
 \[
 V(r,S)=\mathrm{LPH}_j(r,S)=\begin{cases}
     \mathrm{LPH}_j(\mathcal{X})(r,\{1\})&\text{ if }S=X_1,
     \\\mathrm{LPH}_j(\mathcal{X})(r,\{1,2\})&\text{ if }S=X.
 \end{cases}
 \]
 To show that
     \begin{equation}
     \lambda_\mathrm{im}(n,r)=\lambda_V(n,(r,X_1)),
     \end{equation}
     it is enough to show that for all $n, r$, and $\epsilon$, the ranks of
     \[
     g:\mathrm{inc}_*(\mathrm{LPH}_j(r-\epsilon,X_1))\to\mathrm{inc}_*(\mathrm{LPH}_j(r+\epsilon,X_1))
     \]
     and
     \[
     h:\mathrm{LPH}_j(r-\epsilon,X_1)\to \mathrm{LPH}_j(r+\epsilon,X)
     \]
     are equal. (Here we have used the fact that $d_P(X_1,X)=0$ to compute the $\epsilon$-ball about $(r,X_1)$ in $\R\times P$.) The latter 
     follows from the fact that $h$ factors as $g\circ\mathrm{inc}_*$.

Note that when $(j,n)=(0,1)$, the above discussion may not hold, depending on the convention chosen. For any point cloud and all large enough $r$, the vector space $\mathrm{LPH}_0(r,S)$ is one-dimensional, and yet we want $\lambda$ to only take finite values, requiring us to choose an arbitrary $r$ past which we declare $V(r,\cdot)\equiv0$. Our choice in \S\ref{s:experiments} is to set $\mathrm{LPH}_0(r,S)=0$ for all $r$ greater than or equal to the diameter of $S$. In that case, our argument above applies so long as $r+\epsilon$ is strictly less than the diameter of $X_1$. Alternately, one could consider reduced homology, which would effectively increase the rank requirement from $\beta_V^{a,b}\geq n$ to $\beta_V^{a,b}\geq n+1$ in Definition \ref{def:2}, thus removing the information of $\mathrm{LPH}_0$.

We may compute the single-parameter persistence landscape of $X_1$ by setting all distances in $d_P$ to be greater than half the diameter of $X_1$ (see Lemma \ref{lem:dPrange}) and taking the restriction of the generalized persistence landscape to $X_1$.

Now, because the generalized peristence landscape is stable with respect to the interleaving and therefore Gromov-Hausdorff distances (see \S\ref{s:landstab} below), we have shown that, when applied to the Vietoris-Rips filtration, mixup barcodes are also stable with respect to these distances.
 \end{example}

\subsection{Stability of $(\R \times P)$-Landscapes}\label{s:landstab}

This subsection establishes a stability result for our generalized landscapes. Before stating the theorem, we set up notation.

Fix a poset $(P,\preccurlyeq)$ and consider the product poset $\R \times P$. Let $V$ and $V'$ be $(\R \times P)$-modules. Suppose that $P$ has been endowed with extended quasimetrics $d_P$ and $d_P'$---the point here is that we allow the choice of metric on $P$ to be dependent on the modules. Let $d$ and $d'$ both be either the sum quasimetrics or the max quasimetrics on $\R \times P$, in the sense of Example \ref{ex:product_quasimetrics}. Let $\lambda_V$ and $\lambda_{V'}$ denote, respectively, the persistence landscapes associated to these choices. Below, we use $\|\cdot\|_\infty$ to generically denote the sup-norm on any relevant function space (here, the norm is allowed to take the value $+\infty$). 

\begin{theorem}\label{thm:stability_of_landscapes}
    With the notation above, the $(\R \times P)$-landscapes satisfy
    \[
    \|\lambda_V - \lambda_{V'}\|_\infty \leq \mathsf{Int}(V,V') + \|d_P - d_P'\|_\infty.
    \]
\end{theorem}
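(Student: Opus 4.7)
The plan is to establish the bound pointwise---that is, for every $n \in \mathbb{Z}_{\geq 0}$ and $\mathbf{x} = (r,x) \in \R \times P$, I will show $|\lambda_V(n,\mathbf{x}) - \lambda_{V'}(n,\mathbf{x})| \leq \mathsf{Int}(V,V') + \|d_P - d_P'\|_\infty$. Since the right-hand side is symmetric in $V$ and $V'$, it suffices to prove one direction, say $\lambda_V(n,\mathbf{x}) \leq \lambda_{V'}(n,\mathbf{x}) + \mathsf{Int}(V,V') + \delta$, where $\delta := \|d_P - d_P'\|_\infty$ may be assumed finite (else the bound is vacuous). Fix $\epsilon > \mathsf{Int}(V,V')$ arbitrarily, so that $V$ and $V'$ are $\epsilon$-interleaved via morphisms $\phi\colon V \to V'_\epsilon$, $\psi\colon V' \to V_\epsilon$; I will take $\epsilon \downarrow \mathsf{Int}(V,V')$ at the end.

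The argument will have two ingredients. The first is a rank comparison built from the interleaving, following the standard template for interleaving-based stability. Given any $\mathbf{a} = (a_r,a_P) \preccurlyeq \mathbf{b} = (b_r,b_P)$ in $\R \times P$, set $\widetilde{\mathbf{a}} := (a_r - \epsilon, a_P)$ and $\widetilde{\mathbf{b}} := (b_r + \epsilon, b_P)$. By combining naturality of $\phi$ applied to the relation $\widetilde{\mathbf{a}} \preccurlyeq (b_r - \epsilon, b_P)$ with the triangle identity $\psi_{b_r, b_P} \circ \phi_{b_r - \epsilon, b_P} = V((b_r - \epsilon, b_P) \preccurlyeq (b_r + \epsilon, b_P))$ from Definition \ref{def:epsilon-interleaving}, a short diagram chase yields the factorization
\[
V(\widetilde{\mathbf{a}} \preccurlyeq \widetilde{\mathbf{b}}) \;=\; \psi_{b_r, b_P} \circ V'(\mathbf{a} \preccurlyeq \mathbf{b}) \circ \phi_{a_r - \epsilon, a_P},
\]
and therefore $\beta_V^{\widetilde{\mathbf{a}}, \widetilde{\mathbf{b}}} \leq \beta_{V'}^{\mathbf{a}, \mathbf{b}}$.

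The second ingredient is a metric bookkeeping estimate: whenever $\mathbf{a} \preccurlyeq \mathbf{x} \preccurlyeq \mathbf{b}$, the enlarged pair $(\widetilde{\mathbf{a}}, \widetilde{\mathbf{b}})$ automatically satisfies $\widetilde{\mathbf{a}} \preccurlyeq \mathbf{x} \preccurlyeq \widetilde{\mathbf{b}}$, and regardless of whether $d, d'$ are both the sum or both the max quasimetrics from Example \ref{ex:product_quasimetrics},
\[
d(\widetilde{\mathbf{a}}, \mathbf{x}) \leq d'(\mathbf{a}, \mathbf{x}) + \epsilon + \delta, \qquad d(\mathbf{x}, \widetilde{\mathbf{b}}) \leq d'(\mathbf{x}, \mathbf{b}) + \epsilon + \delta.
\]
For the sum case this follows immediately from $|d_P(a_P, x) - d_P'(a_P, x)| \leq \delta$; for the max case it reduces to the elementary inequality $\max(A + \delta, B + \epsilon) \leq \max(A, B) + \epsilon + \delta$.

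Combining the two ingredients: set $r = \lambda_V(n, \mathbf{x})$ and fix any $s$ with $0 \leq s < r - \epsilon - \delta$ (if no such $s$ exists the inequality is trivial). For any $\mathbf{a} \preccurlyeq \mathbf{x} \preccurlyeq \mathbf{b}$ with $d'(\mathbf{a}, \mathbf{x}), d'(\mathbf{x}, \mathbf{b}) \leq s$, the metric estimate gives $d(\widetilde{\mathbf{a}}, \mathbf{x}), d(\mathbf{x}, \widetilde{\mathbf{b}}) \leq s + \epsilon + \delta < r$, and Definition \ref{def:2} then forces $\beta_V^{\widetilde{\mathbf{a}}, \widetilde{\mathbf{b}}} \geq n$, which by the rank comparison yields $\beta_{V'}^{\mathbf{a}, \mathbf{b}} \geq n$. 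Hence $\lambda_{V'}(n, \mathbf{x}) \geq s$, and letting $s \uparrow r - \epsilon - \delta$ and $\epsilon \downarrow \mathsf{Int}(V, V')$ completes the proof. The main delicate point is the handling of the supremum in Definition \ref{def:2}---since it need not be attained, one must use the \emph{strict} inequality $s + \epsilon + \delta < r$ to conclude $\beta_V^{\widetilde{\mathbf{a}}, \widetilde{\mathbf{b}}} \geq n$ via a sandwiching argument; the rank factorization and the sum/max metric estimates are then routine.
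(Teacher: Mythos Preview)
Your proof is correct and follows essentially the same route as the paper: both arguments factor the structure map $V(\widetilde{\mathbf{a}} \preccurlyeq \widetilde{\mathbf{b}})$ through $V'(\mathbf{a} \preccurlyeq \mathbf{b})$ via the interleaving morphisms and pair this with the sum/max metric comparison between $d$ and $d'$ on the $\epsilon$-shifted endpoints. Your explicit use of the strict inequality $s+\epsilon+\delta < r$ together with a limiting argument handles the possibility that the supremum in Definition~\ref{def:2} is not attained a touch more carefully than the paper, which works directly with $d \leq \lambda_V$; this is a cosmetic refinement, not a different strategy (and note the reuse of the symbol $r$ for both the $\R$-coordinate of $\mathbf{x}$ and the landscape value, which is harmless but worth cleaning up).
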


The proof generally follows that of~\cite[Theorem 30]{Vipond20}, with some additional work required to account for the differences in the poset metrics.

\begin{proof}
    Suppose that $V$ and $V'$ are $\epsilon$-interleaved via $(\R \times P)$-module morphisms $\phi:V \to V'_\epsilon$ and $\psi:V' \to V_\epsilon$. Fix $(n,r,x) \in \mathbb{Z}_{\geq 0} \times \R \times P$. We want to show that 
    \begin{equation}\label{eqn:landscape_stability}
    |\lambda_V(n,(r,x)) - \lambda_{V'}(n,(r,x))| \leq \epsilon + \|d_P - d_P'\|_\infty.
    \end{equation}
    For simplicity, we assume, without loss of generality, that 
    \[
    \lambda_V(n,(r,x)) \geq  \lambda_{V'}(n,(r,x)).
    \]
    Moreover, we may assume that 
    \[
    \lambda_V(n,(r,x)) \geq \epsilon + \|d_P - d_P'\|_\infty,
    \]
    as the desired claim follows immediately otherwise. 

    Let $(s,a), (t,b) \in \R \times P$ be arbitrary elements satisfying 
    \[
    (s,a) \preccurlyeq (r,x) \preccurlyeq (t,b)
    \]
    and
\begin{equation}\label{eqn:landscape_stability_condition}
    d'((s,a),(r,x)), d'((r,x),(t,b)) \leq \lambda_V(n,(r,x)) - \epsilon - \|d_P - d_P'\|_\infty.
    \end{equation}
    Then the desired inequality \eqref{eqn:landscape_stability} follows if we show that
    \begin{equation}\label{eqn:betti_number}
    \beta_{V'}^{(s,a),(t,b)} \geq n.
    \end{equation}
    To this end, consider the diagram
\[\begin{tikzcd}
	{V(s-\epsilon,a)} && {V(s+\epsilon,a)} && {V(t+\epsilon,b)} \\
	& {V'(s,a)} && {V'(t,b)}
	\arrow[from=1-1, to=1-3]
	\arrow["{\phi_{s-\epsilon,a}}"', from=1-1, to=2-2]
	\arrow[from=1-3, to=1-5]
	\arrow["{\psi_{s,a}}"', from=2-2, to=1-3]
	\arrow[from=2-2, to=2-4]
	\arrow["{\psi_{t,b}}"', from=2-4, to=1-5]
\end{tikzcd}\]
The diagram commutes, by the interleaving assumption. Observe that 
\begin{itemize}
    \item $(s-\epsilon,a) \preccurlyeq (r,x) \preccurlyeq (t + \epsilon, b)$, and 
    \item $d((s-\epsilon,a),(r,x)), \, d((r,x),(t+\epsilon,b)) \leq \lambda_V(n,(r,x))$.
\end{itemize}
The first point is clear, but the second requires some justification (we only prove one of the inequalities here, with the other being analogous):
\begin{align}
d((s-\epsilon,a),(r,x)) &\leq d((s-\epsilon,a),(s,a)) + d((s,a),(r,x)) \label{eqn:stability_proof_1} \\
&\leq \epsilon + d'((s,a),(r,x)) + \|d_P - d_P'\|_\infty \label{eqn:stability_proof_2} \\
&\leq \epsilon + \big(\lambda_V(n,(r,x)) - \epsilon - \|d_P - d_P'\|_\infty \big)  + \|d_P - d_P'\|_\infty \label{eqn:stability_proof_3} \\
&= \lambda_V(n,(r,x)). \nonumber
\end{align}
The inequalities in this chain are justified as follows:
\begin{itemize}
    \item \eqref{eqn:stability_proof_1} is the triangle inequality for the quasimetric.
    \item \eqref{eqn:stability_proof_2} first uses the fact that $d((s-\epsilon,a),(s,a)) \leq \epsilon$, by construction of the sum/max quasimetrics. Secondly, it uses the fact that 
    \begin{equation}\label{eqn:norm_bound}
    d((s,a),(r,x)) \leq  d'((s,a),(r,x)) + \|d_P - d_P'\|_\infty.
    \end{equation}
    To establish this, we need to consider the two quasimetric constructions (sum versus max) separately. If $d$ and $d'$ are sum quasimetrics, then we easily deduce that  
    \begin{align*}
    d((s,a),(r,x)) - d'((s,a),(r,x)) &= |s-r| + d_P(a,x) - |s-r| - d_P'(a,x) \\
    &= d_P(a,x) - d_P'(a,x) \leq \|d_P - d_P'\|_\infty,
    \end{align*}
    from which the inequality \eqref{eqn:norm_bound} follows. On the other hand, if $d$ and $d'$ are max quasimetrics, then we have 
    \begin{align*}
    d((s,a),(r,x)) - d'((s,a),(r,x)) &= \max\{|r-s|,d_P(a,x)\} - \max\{|r-s|,d_P'(a,x)\} \\
    &= \left\{
    \begin{array}{rl}
    0 & \mbox{if $|r-s| \geq d_P(a,x),\, d_P'(a,x)$} \\
    d_P(a,x) - d_P'(a,x) & \mbox{if $|r-s| \leq d_P(a,x),\, d_P'(a,x)$} \\
    |r-s| - d_P'(a,x) & \mbox{if $d_P(a,x) \leq |r-s| \leq d_P'(a,x)$} \\ 
    d_P(a,x) - |r-s| & \mbox{if $d_P(a,x) \geq |r-s| \geq d_P'(a,x)$}
    \end{array}\right\}\\
    &\leq \left\{
    \begin{array}{rl}
    0 & \mbox{if $|r-s| \geq d_P(a,x),\, d_P'(a,x)$} \\
    d_P(a,x) - d_P'(a,x) & \mbox{if $|r-s| \leq d_P(a,x),\, d_P'(a,x)$} \\
    0 & \mbox{if $d_P(a,x) \leq |r-s| \leq d_P'(a,x)$} \\ 
    d_P(a,x) - d_P'(a,x) & \mbox{if $d_P(a,x) \geq |r-s| \geq d_P'(a,x)$}
    \end{array}\right\}\\
    &\leq \|d_P-d_P'\|_\infty,
    \end{align*}
    so that \eqref{eqn:norm_bound} once again follows.
    \item \eqref{eqn:stability_proof_3} applies the assumption \eqref{eqn:landscape_stability_condition}.
\end{itemize}
From these points, it follows that the map across the top of our commutative diagram is at least rank $n$. This is only possible if the rank along the bottom of the diagram is also at least $n$, i.e., that \eqref{eqn:betti_number} holds. This completes the proof.
\end{proof}

We now give some immediate corollaries of Theorem \ref{thm:stability_of_landscapes}. The following statements use the notation set at the beginning of this subsection.

\begin{corollary}
    Suppose that $d_P = d_P'$. Then 
    \[
    \|\lambda_V - \lambda_{V'}\|_\infty \leq \mathsf{Int}(V,V').
    \]
    If the modules $V$ and $V'$ arise as labeled persistent homology modules of labeled metric spaces $\mathcal{X}$ and $\mathcal{X}'$, with registered labels, then
    \[
    \|\lambda_V - \lambda_{V'}\|_\infty \leq 2 \cdot \mathsf{GH}_k(\mathcal{X},\mathcal{X}').
    \]
\end{corollary}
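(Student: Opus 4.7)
The plan is straightforward: this corollary follows directly from Theorem \ref{thm:stability_of_landscapes} combined with Theorem \ref{thm:stability registered}, with essentially no new work required beyond unpacking the hypotheses.

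For the first inequality, I would simply invoke Theorem \ref{thm:stability_of_landscapes} under the extra assumption $d_P = d_P'$. Since then $\|d_P - d_P'\|_\infty = 0$, the general bound
\[
\|\lambda_V - \lambda_{V'}\|_\infty \leq \mathsf{Int}(V,V') + \|d_P - d_P'\|_\infty
\]
collapses to $\|\lambda_V - \lambda_{V'}\|_\infty \leq \mathsf{Int}(V,V')$. No modification of the proof is needed; the point of isolating this case is that in practice one often equips a fixed poset $P$ (for instance, $P_k$ in our setting) with a single extended quasimetric before comparing modules over it.

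For the second inequality, I would specialize to $V = \mathrm{LPH}_j(\mathcal{X})$ and $V' = \mathrm{LPH}_j(\mathcal{X}')$, which are both $(\R \times P_k)$-modules and therefore share the same underlying poset, so the hypothesis $d_P = d_P'$ is automatically meaningful. Chaining the first part of the corollary with the Gromov--Hausdorff stability estimate of Theorem \ref{thm:stability registered} gives
\[
\|\lambda_V - \lambda_{V'}\|_\infty \leq \mathsf{Int}(\mathrm{LPH}_j(\mathcal{X}),\mathrm{LPH}_j(\mathcal{X}')) \leq 2 \cdot \mathsf{GH}_k(\mathcal{X},\mathcal{X}'),
\]
which is the desired inequality.

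There is no genuine obstacle here: the corollary is a clean composition of two Lipschitz-type results, and the only thing worth remarking on is that the landscape construction implicitly depends on a choice of extended quasimetric on $P_k$ (as discussed after Definition \ref{def:2} and in Example \ref{ex:gud}), so I would be careful to state in the proof that both landscapes are computed using the \emph{same} such choice—this is exactly the content of the assumption $d_P = d_P'$.
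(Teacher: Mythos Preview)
Your proposal is correct and matches the paper's approach exactly: the paper's proof is the single sentence ``This is a direct application of Theorems \ref{thm:stability_of_landscapes} and \ref{thm:stability registered},'' and your write-up simply unpacks that composition.
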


\begin{proof}
    This is a direct application of Theorems \ref{thm:stability_of_landscapes} and \ref{thm:stability registered}.
\end{proof}

Recall that, taking $P = \R^{n-1}$ endowed with $\ell_\infty$-distance, and taking the max quasimetric on $\R \times P$, our generalized persistence landscape $\lambda_V^{\R \times P}$ is equivalent to Vipond's multiparameter persistence landscape $\lambda_V$ (see Proposition \ref{prop:mp=gp}). In this setting, the following is immediate.

\begin{corollary}
    For $\R^n$-modules $V$ and $V'$, their multiparameter persistence landscapes satisfy 
    \[
    \|\lambda_V - \lambda_{V'}\|_\infty \leq \mathsf{Int}(V,V').
    \]
\end{corollary}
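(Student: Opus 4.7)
The plan is to deduce this corollary as a direct application of Theorem \ref{thm:stability_of_landscapes}, together with the identification of multiparameter persistence landscapes with generalized persistence landscapes established in Proposition \ref{prop:mp=gp}. First, I would set $P = \R^{n-1}$ equipped with the $\ell_\infty$ (sup-norm) distance, and endow $\R \times P$ with the max quasimetric of Example \ref{ex:product_quasimetrics}. Under this identification, $\R \times P \cong \R^n$ as posets with the product order, and the max quasimetric on $\R \times P$ agrees with the $\ell_\infty$-distance on $\R^n$.

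Next, I would invoke Proposition \ref{prop:mp=gp} to conclude that, under this identification, the multiparameter persistence landscape $\lambda_V$ of Definition \ref{def:multiparameter_landscape} coincides with the generalized $(\R \times P)$-landscape $\lambda_V^{\R \times P}$ of Definition \ref{def:2}, and likewise for $V'$. In the hypotheses of Theorem \ref{thm:stability_of_landscapes}, I would take the same extended quasimetric $d_P = d_P' = \ell_\infty$ on $P$ for both modules, so that $\|d_P - d_P'\|_\infty = 0$.

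With these choices, Theorem \ref{thm:stability_of_landscapes} immediately yields
\[
\|\lambda_V^{\R \times P} - \lambda_{V'}^{\R \times P}\|_\infty \leq \mathsf{Int}(V,V') + 0,
\]
and translating back via Proposition \ref{prop:mp=gp} gives the claimed inequality $\|\lambda_V - \lambda_{V'}\|_\infty \leq \mathsf{Int}(V,V')$. Since all the real work is already done in Theorem \ref{thm:stability_of_landscapes} and Proposition \ref{prop:mp=gp}, there is no substantive obstacle here; the only subtlety is the bookkeeping to verify that the interleaving distance on $\R^n$-modules used in the statement matches the interleaving distance on $(\R \times P)$-modules defined in Section \ref{sec:def P-repr}, which is immediate from Definition \ref{def:epsilon-interleaving} once the posets are identified.
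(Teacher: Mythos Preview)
Your proposal is correct and follows exactly the paper's own approach: set $P=\R^{n-1}$ with the $\ell_\infty$-distance, use the max quasimetric on $\R\times P$, take $d_P=d_P'$ so the error term vanishes, and invoke Theorem~\ref{thm:stability_of_landscapes} together with Proposition~\ref{prop:mp=gp}. Your closing remark about the interleaving distance is also on point---the $\mathsf{Int}$ appearing in the statement is the paper's own $(\R\times P)$-interleaving (shifting only in the first coordinate), not Vipond's diagonal-shift interleaving, as the paper itself notes in Remark~\ref{rem:interleaving)_comparisons}.
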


\begin{remark}\label{rem:interleaving)_comparisons}
    The previous corollary does not recover Vipond's stability result \cite[Theorem 30]{Vipond20}, as the interleaving distances appearing in each result are not the same. In \cite{Vipond20}, the standard flow structure on $\R^n$ is used to define interleaving distance: namely, the flow is generated by shifting in the direction of the \textit{all ones vector} $(1,1,\ldots,1) \in \R^n$. On the other hand, our interleaving distance is generated by shifts in the direction of the first standard basis vector $(1,0,\ldots,0) \in \R^n$, which is a nonstandard (and somewhat degenerate) choice.
\end{remark}

One can also choose the metrics $d_P$ and $d_P'$ in a data-driven way, while maintaining Gromov-Hausdorff stability. Suppose that $V$ and $V'$ are labeled persistent homology modules for $k$-labeled metric spaces $\mathcal{X}$ and $\mathcal{X}'$, with registered labels. We define the distances $d_P$ and $d_{P}'$ as follows. Each weighted edge in the Hasse diagram for $P$ is given by an inclusion of one subset of $Q \subset [k]$ into a larger one $Q' \subset [k]$; we assign the weight of this directed edge to be the $Q'$-diameter of this subset of labels, in the sense of Definition \ref{def:q_diameter}, and let $d_P$ denote the induced ultrametric distance (see Example \ref{ex:gud}), with $d_{P}'$ defined similarly. 

\begin{corollary}\label{cor:4GH}
    With the setup described in the previous paragraph, 
    \[
    \|\lambda_V - \lambda_{V'} \|_\infty \leq 4 \cdot \mathsf{GH}_k(\mathcal{X},\mathcal{X}').
    \]
\end{corollary}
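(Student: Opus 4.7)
The plan is to combine Theorem \ref{thm:stability_of_landscapes} with Theorem \ref{thm:stability registered} and Proposition \ref{prop:GH_k_estimate}, splitting the desired bound of $4 \cdot \mathsf{GH}_k(\mathcal{X},\mathcal{X}')$ as a sum of two contributions of $2 \cdot \mathsf{GH}_k(\mathcal{X},\mathcal{X}')$ each. Theorem \ref{thm:stability_of_landscapes} gives
\[
\|\lambda_V - \lambda_{V'}\|_\infty \leq \mathsf{Int}(V,V') + \|d_P - d_{P}'\|_\infty,
\]
and Theorem \ref{thm:stability registered} immediately yields $\mathsf{Int}(V,V') \leq 2 \cdot \mathsf{GH}_k(\mathcal{X},\mathcal{X}')$. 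The remaining work is therefore to show that $\|d_P - d_P'\|_\infty \leq 2 \cdot \mathsf{GH}_k(\mathcal{X},\mathcal{X}')$.

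To analyze $\|d_P - d_P'\|_\infty$, I would first derive an explicit formula for the ultrametric distance $d_P$. The Hasse diagram for $P_k$ has directed edges from $Q$ to $Q \cup \{i\}$, with assigned weight $\mathrm{diam}_{Q \cup \{i\}}(\mathcal{X})$. For any $p \preccurlyeq p'$ with $p \neq p'$, any path from $p$ to $p'$ adds elements one at a time; since $Q$-diameter is monotone in $Q$, the maximum weight along any such path is achieved by the final edge, whose target is $p'$. Thus
\[
d_P(p, p') = \mathrm{diam}_{p'}(\mathcal{X}) \quad \text{for all } p \preccurlyeq p', \; p \neq p',
\]
with $d_P(p,p) = 0$ and $d_P(p,p') = +\infty$ for incomparable pairs. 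The same formula holds for $d_{P}'$ with $\mathcal{X}$ replaced by $\mathcal{X}'$.

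Consequently, for comparable pairs we have $|d_P(p,p') - d_P'(p,p')| = |\mathrm{diam}_{p'}(\mathcal{X}) - \mathrm{diam}_{p'}(\mathcal{X}')|$, while for incomparable pairs both distances equal $+\infty$ and their difference is naturally interpreted as $0$ (this is the standard convention for extended-valued metrics, and is the only subtle point). Taking the supremum over all pairs,
\[
\|d_P - d_P'\|_\infty = \max_{Q \subset [k]} |\mathrm{diam}_Q(\mathcal{X}) - \mathrm{diam}_Q(\mathcal{X}')| \leq 2 \cdot \mathsf{GH}_k(\mathcal{X},\mathcal{X}'),
\]
where the final inequality is exactly Proposition \ref{prop:GH_k_estimate}. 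Combining everything gives the claimed bound.

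The main obstacle, such as it is, lies in step three: writing down $d_P$ explicitly and handling the incomparable-pairs case correctly. Once one observes that diameters are monotone, the ``$\inf \max$'' in the ultrametric collapses to the diameter of the larger set, which is precisely the quantity controlled by Proposition \ref{prop:GH_k_estimate}; this alignment is what makes the ultrametric choice especially natural in this setting. If the convention for $|{+}\infty - ({+}\infty)|$ is a concern, one can sidestep it by noting that $d_P$ and $d_P'$ are simultaneously infinite exactly on the same pairs (determined purely by the poset $P_k$), so the sup-norm is well defined on the complement and infinite pairs contribute nothing.
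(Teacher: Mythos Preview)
Your proof is correct and follows essentially the same approach as the paper: apply Theorem \ref{thm:stability_of_landscapes}, bound $\mathsf{Int}(V,V')$ via Theorem \ref{thm:stability registered}, and bound $\|d_P - d_P'\|_\infty$ via Proposition \ref{prop:GH_k_estimate}. You go further than the paper by explicitly computing $d_P(p,p') = \mathrm{diam}_{p'}(\mathcal{X})$ from monotonicity of $Q$-diameters, whereas the paper simply asserts $\|d_P - d_P'\|_\infty \leq \max_{Q \subset [k]} |\mathrm{diam}_Q(\mathcal{X}) - \mathrm{diam}_Q(\mathcal{X}')|$ ``by the construction of $d_P$ and $d_P'$''; your added detail and your handling of incomparable pairs are both fine.
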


\begin{proof}
    By the construction of $d_P$ and $d_P'$, and by Proposition \ref{prop:GH_k_estimate}, we have 
    \[
    \|d_P - d_P'\|_\infty \leq \max_{{Q} \subset [k]} |\mathrm{diam}_{Q}(\mathcal{X}) - \mathrm{diam}_{Q}(\mathcal{X}') | \leq 2 \cdot \mathsf{GH}_k(\mathcal{X},\mathcal{X}').
    \]
    The result then follows by Theorems \ref{thm:stability registered} and \ref{thm:stability_of_landscapes}.
\end{proof}

\begin{remark}
    The corollaries in this subsection illustrate some choices of metrics $d_P$ which are principled in the sense that they induce interpretable stability bounds. In practice, we also employ choices of $d_P$ which give better empirical results, even if they are not as theoretically sound as those considered in the corollaries; in any case, Theorem \ref{thm:stability_of_landscapes} applies to these alternative choices. 
\end{remark}

\subsection{Basic Properties of $(\R \times P)$-Landscapes}\label{s:theory}

Next we turn to proving theoretical properties of $(\R\times P)$-landscapes, which are required for calculations (both by hand and implicitly in our code). First we address the general properties of $(\R\times P)$-landscapes.

\begin{lemma}\label{lem:properties}
    The generalized persistence landscape of an $(\R\times P)$-module $V$ has the following properties:
    \begin{enumerate}
        \item $\lambda_V(n,\bx)\geq0$.
        \item $\lambda_V(n,\bx)\geq\lambda_V(n+1,\bx)$.
        \item $\lambda_V(n,\bx) = \lambda_V (n, (r, x))$ is 1-Lipschitz in $r$ when $d$ is the sum quasimetric and $d_P$ is any extended quasimetric. 
    \end{enumerate}
\end{lemma}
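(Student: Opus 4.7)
For (1), the supremum in the definition of $\lambda_V(n,\bx)$ is taken over a subset of $[0,+\infty)$, so nonnegativity is immediate (under the convention $\sup\emptyset = 0$). For (2), $\beta_V^{a,b}\geq n+1$ implies $\beta_V^{a,b}\geq n$, so the admissible set for $\lambda_V(n+1,\bx)$ is contained in that for $\lambda_V(n,\bx)$, and the inequality follows from monotonicity of $\sup$.

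The substantive claim is (3). Fix $n$ and $x\in P$, and without loss of generality take $r\leq r'$; write $\delta = r'-r\geq 0$. The plan is to prove the two bounds
\[
\lambda_V(n,(r,x)) - \lambda_V(n,(r',x)) \leq \delta \quad \text{and} \quad \lambda_V(n,(r',x)) - \lambda_V(n,(r,x)) \leq \delta
\]
separately, by symmetric truncation-and-factorization arguments. For the first, set $\epsilon = \lambda_V(n,(r,x))$; the bound is trivial when $\epsilon \leq \delta$, so assume $\epsilon > \delta$ and fix $\epsilon' \in (0, \epsilon - \delta)$. Take any pair $(s,a) \preccurlyeq (r',x) \preccurlyeq (t,b)$ with sum-quasimetric distances $d((s,a),(r',x)), \, d((r',x),(t,b)) \leq \epsilon'$, and introduce the truncation $(s^*, a)$ with $s^* = \min(s,r)$, so that $(s^*, a) \preccurlyeq (r,x)$ automatically.

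A short case analysis (on whether $s \leq r$ or $r < s \leq r'$), using only the additive formula for the sum quasimetric and the inequality $r' - s \geq 0$, shows that $d((s^*, a),(r,x)) \leq \epsilon'$. On the other side, the identity $d((r,x),(t,b)) = d((r',x),(t,b)) + \delta$ gives $d((r,x),(t,b)) \leq \epsilon' + \delta < \epsilon$. Both distances are therefore strictly less than $\epsilon$, so the definition of $\lambda_V(n,(r,x))$ as a supremum yields $\beta_V^{(s^*,a),(t,b)} \geq n$. Since $(s^*, a) \preccurlyeq (s,a) \preccurlyeq (t,b)$, the connecting morphism $V((s^*,a) \preccurlyeq (t,b))$ factors through $V(s,a)$, and rank is non-increasing under composition; hence $\beta_V^{(s,a),(t,b)} \geq \beta_V^{(s^*,a),(t,b)} \geq n$. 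Letting $\epsilon' \uparrow \epsilon - \delta$ then gives $\lambda_V(n,(r',x)) \geq \epsilon - \delta$, as required. The second inequality is obtained by the symmetric argument, replacing $s^*$ with $t^* = \max(t, r')$ and using the factorization $V(s,a) \to V(t,b) \to V(t^*, b)$ in the rank comparison.

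The only technical content is the truncation step and the verification that it produces a point inside the appropriate ball; this is where the sum quasimetric is used essentially, since a shift of the $\R$-coordinate by $\delta$ changes $d$ by exactly $\delta$ while leaving the $d_P$-contribution undisturbed, allowing the shift to be absorbed cleanly. Without this additive interaction, one would have to control the $\R$-shift and the $d_P$-distance simultaneously, and the Lipschitz-with-constant-$1$ conclusion would not follow from such a clean argument.
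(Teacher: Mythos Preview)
Your proof is correct and follows essentially the same truncation-and-factorization strategy as the paper's argument for part (3): adjust one endpoint of a test pair so that it sits in the admissible ball around the other base point, then use the rank inequality for a factored composition to transfer the Betti bound. Your organization differs only cosmetically (you prove the two Lipschitz directions separately rather than taking a WLOG and then splitting on whether $\bx\preccurlyeq\by$ or $\by\preccurlyeq\bx$), and your use of a strict $\epsilon' < \epsilon - \delta$ with a limiting step is in fact slightly more careful than the paper's direct appeal to the supremum value.
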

\begin{proof}
    We note that the first two properties follow immediately from the definition (for all $P$-landscapes, not just for $(\R\times P)$-landscapes). To show that $\lambda_V(n, (r, x))$ is 1-Lipschitz in $r$, we begin by choosing $\bx = (r_x, z), \by = (r_y, z)$ (so $\bx$ and $\by$ differ only in their $\R$-coordinate), and $n$, and wish to show that 
    \begin{equation}
    \label{eq:landscape_lipschitz_condition}
        |\lambda_V(n, \by) - \lambda_V(n, \bx)| \le d(\bx, \by).
    \end{equation}    
    Without loss of generality, assume that $\lambda_V(n, \by) \ge \lambda_V(n, \bx)$, so we can rearrange the above equation to the equivalent condition
    \[
        \lambda_V(n, \bx) \ge \lambda_V(n, \by) - d(\bx, \by).
    \]
    If $d(\bx, \by) \ge \lambda_V(n, \by)$, this holds trivially by condition 1, so we must only consider when $d(\bx, \by) < \lambda_V(n, \by)$. 
    
    First, assume that $\bx \preccurlyeq \by$. Let $\epsilon = \lambda_V(n, \by) - d(\bx, \by)$ and choose any $\ba \preccurlyeq \bx \preccurlyeq \bb$ such that $d(\ba, \bx) \le \epsilon$ and $d(\bx, \bb) \le \epsilon$. We wish to show that for all such $\ba$ and $\bb$, $\beta_V^{\ba, \bb} \ge n$, which will show that $\lambda_V(n, \bx) \ge \epsilon = \lambda_V(n, \by) - d(\bx, \by)$, which will complete the proof.
    
    By the triangle inequality, because $\ba \preccurlyeq \bx \preccurlyeq \by$, we have that  
    \begin{equation}
    \label{eqn:one_lipschitz_triangle_ineq}
        d(\ba, \by) \le d(\ba, \bx) + d(\bx, \by) \le (\lambda_V(n, \by) - d(\bx, \by)) + d(\bx, \by) = \lambda_V(n, \by).
    \end{equation}
    On the other hand, we have three cases for the relationship for $\bb$ and $\by$: $\bb \preccurlyeq \by$, $\by \preccurlyeq \bb$, or $\bb$ and $\by$ are not comparable. To handle all of these together, we wish to find a $\bc$ such that $\by \preccurlyeq \bc$, $\bb \preccurlyeq \bc$, and $d(\by, \bc) \le \lambda_V(n, \by)$. We will label the coordinates of $\bb$ as $(r_b, z_b)$ and choose $\bc = (r_c, z_c) = (\max(r_y, r_b), z_b)$. Because $\bb$ and $\bc$ share the same $P$-coordinate and $r_c \ge r_b$ we have that $\bb \preccurlyeq \bc$. On the other hand, because $\bx \preccurlyeq \bb$, we know that $z_b \preccurlyeq z$ in $P$. Thus, because $r_y \le r_c$, we have that $\by \preccurlyeq \bc$. 
    
    Finally, we need to show that $d(\by, \bc) \le \lambda_V(n, \by)$. If $r_b \ge r_y$, then we have that $\bc = (r_b, z_b) = \bb$, so we can use a similar triangle inequality argument as in Equation \ref{eqn:one_lipschitz_triangle_ineq} to show that $d(\by, \bc) = d(\by, \bb) \le \lambda_V(n, \by)$. Otherwise, if $r_b < r_y$, then $\bc = (r_y, z_b)$. However, we observe that 
    \[
        d((r_x, z), (r_x, z_b)) \le d((r_x, z), (r_b, z_b)) = d(\bx, \bb) \le \epsilon = \lambda_V(n, \by) - d(\bx, \by) \le \lambda_V(n, \by).
    \]
    However, because of the additive structure of our sum quasimetric, we have that
    \[
        d(\by, \bc) = d((r_y, z), (r_y, z_b)) = d((r_x, z), (r_x, z_b)) \le \lambda_V(n, \by).
    \]
    Therefore, we have that both $\ba$ and $\bc$ are within distance $\lambda_V(n, \by)$ of $\by$ with $\ba \preccurlyeq \by \preccurlyeq \bc$ so we know that $\beta_V^{\ba, \bc} \ge n$. However, because $\ba \preccurlyeq \bb \preccurlyeq \bc$, we can factor the map $V(\ba) \rightarrow V(\bc)$ (which we know is rank at least $n$) as $V(\ba) \rightarrow V(\bb) \rightarrow V(\bc)$, which means that $\beta_V^{\ba, \bb} \ge n$ as well.

    The case when $\by \preccurlyeq \bx$ is very similar. However, in this case we have that $\by \preccurlyeq \bx \preccurlyeq \bb$, andcan establish that $d(\bx, \bb) \le \lambda_V(n, \by)$ directly via the triangle inequality, and our three cases regard the relationship between $\ba$ and $\by$. In this case, if $\ba = (r_a, z_a)$, we define $\bc = (\min(r_y, r_a), z_a)$; by following the same logic as in the $\bx \preccurlyeq \by$ case with some inequalities flipped, we can show that $\bc \preccurlyeq \by$, $\bc \preccurlyeq \ba$, and $d(\bc, \by) \le \lambda_V(n, \by)$. These three facts combine to establish that $\beta_V^{\bc, \bb} \ge n$, and because $\bc \preccurlyeq \ba \preccurlyeq \bb$, we can factor $V(\bc) \rightarrow V(\bb)$ as $V(\bc) \rightarrow V(\ba) \rightarrow V(\bb)$, establishing $\beta_V^{\ba, \bb} \ge n$ as well.
\end{proof}

We now turn to Proposition \ref{prop:supinf}, which helps to simplify our computations and is a key component of our code. See also \cite[Prop.~22]{Vipond20}. To do so, we require a few new notions, which we discuss in the following remark.

\begin{remark}\label{rmk:paths}
Let $V:P\to\mathrm{Vect}_\mathbb{F}$ be a $P$-module.
\begin{enumerate} 
    \item Each path $f$ in $P$ induces a single-parameter persistence module, which we will denote by $f^*V$: it is the composition of functors $f^*V = V\circ \mathrm{inc}$, where $\mathrm{inc}:f\hookrightarrow P$ is the inclusion map.
    \item Each path from $\ba$ to $\bb$ determines a linear map $V(\ba)\to V(\bb)$; by functoriality of the $P$-module, they all have rank $\beta_V^{a,b}$. Therefore we do not need to discuss paths when calculating $\lambda_P$ in general, but we do need them to compare $\lambda_{\R\times P}$ to single-parameter landscapes, as we do in the following proposition.
\end{enumerate}
\end{remark}

Let $\mathcal{F}_{\bx}$ denote the \textbf{set of all paths} in $\R\times P$ through $\bx$.
\begin{prop}\label{prop:supinf}
    Let $V:\R\times P\to\mathrm{Vect}_\mathbb{F}$ be a $P$-module. For any $\bx \in \R \times P$, $f\in\mathcal{F}_{\bx}$, and $k \in \mathbb{Z}$, let $\lambda_{f^*V}(n,u)$
    be the single-parameter persistence landscape of $f^*V$ (see Remark \ref{rmk:paths}). Then for any extended quasimetric $d$ on $\R\times P$,
    \[
    \lambda_V(n,\bx) = \inf\{\lambda_{f^*V}(n, \bx) \, | \, f \in \mathcal{F}_{\bx} \}.
    \]
\end{prop}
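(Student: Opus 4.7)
The plan is to prove the two inequalities $\lambda_V(n,\bx) \leq \inf_{f \in \mathcal{F}_\bx} \lambda_{f^*V}(n,\bx)$ and $\lambda_V(n,\bx) \geq \inf_{f \in \mathcal{F}_\bx} \lambda_{f^*V}(n,\bx)$ separately, by comparing the sets over which each supremum is taken. Since both landscapes are defined as suprema of scales $\epsilon$ at which a certain Betti-rank condition holds, the whole argument reduces to checking how the two conditions are related for each $\epsilon$.

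For $\lambda_V(n,\bx) \leq \lambda_{f^*V}(n,\bx)$ at a fixed $f \in \mathcal{F}_\bx$, I would observe that the defining condition for $\lambda_V(n,\bx)$ quantifies over all pairs $\ba \preccurlyeq \bx \preccurlyeq \bb$ in the whole poset $\R \times P$, while that for $\lambda_{f^*V}(n,\bx)$ only involves pairs on $f$. Hence any $\epsilon$ witnessing the Betti-rank condition for $\lambda_V(n,\bx)$ automatically witnesses the (strictly weaker) condition defining $\lambda_{f^*V}(n,\bx)$. This gives the inequality at each path, and infimizing over $f \in \mathcal{F}_\bx$ yields the first bound.

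For the reverse inequality, I would fix $\epsilon > \lambda_V(n,\bx)$ and use the definition of the supremum to extract a pair $\ba \preccurlyeq \bx \preccurlyeq \bb$ with $d(\ba,\bx), d(\bx,\bb) \leq \epsilon$ and $\beta_V^{\ba,\bb} < n$. Taking $f$ to be any path through the chain $\ba \preccurlyeq \bx \preccurlyeq \bb$, the pair $(\ba,\bb)$ itself witnesses $\lambda_{f^*V}(n,\bx) \leq \epsilon$, so $\inf_{f \in \mathcal{F}_\bx} \lambda_{f^*V}(n,\bx) \leq \epsilon$. Since $\epsilon$ was arbitrarily close to $\lambda_V(n,\bx)$ from above, letting $\epsilon \downarrow \lambda_V(n,\bx)$ closes the argument.

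The main obstacle is conceptual rather than technical: one needs to clarify what counts as a ``path'' in $\R \times P$, since Example \ref{ex:gud} defines paths via Hasse diagrams of finite posets while the $\R$-factor here is continuous. Following Remark \ref{rmk:paths}, I would interpret a path through $\bx$ simply as a totally ordered sub-poset of $\R \times P$ containing $\bx$, so that $f^*V = V \circ \mathrm{inc}$ is indeed a single-parameter persistence module on which the classical landscape of Definition \ref{def:1land} is well-defined. Under this interpretation, any finite chain such as $\{\ba,\bx,\bb\}$ is an admissible path, which is all that the construction in the second inequality requires.
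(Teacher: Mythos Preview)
Your proof is correct and takes essentially the same approach as the paper's: both arguments reduce each inequality to comparing the Betti-rank condition over all pairs $\ba \preccurlyeq \bx \preccurlyeq \bb$ in $\R \times P$ versus pairs lying on a fixed path, using that any such chain sits on some path in $\mathcal{F}_\bx$. The only cosmetic difference is that for the inequality $\lambda_V(n,\bx) \geq \inf_f \lambda_{f^*V}(n,\bx)$ you argue by contraposition (take $\epsilon > \lambda_V(n,\bx)$, extract a failing pair, and route a path through it), whereas the paper argues directly (take $\epsilon_0 \leq \inf$, then for any $\ba,\bb$ within $\epsilon_0$ choose a path through them and use $\epsilon_0 \leq \lambda_{f^*V}$ to force $\beta_V^{\ba,\bb} \geq n$).
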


\begin{proof}
    Let $\epsilon_0 \le \inf\{\lambda_{f^*V}(k, \bx) \, | \, f \in \mathcal{F}_{\bx} \}$. Then choose $\ba, \bb \in \R \times P$ such that $\ba \preccurlyeq \bx \preccurlyeq \bb$ and $d(\ba, \bx) \le \epsilon_0$ and $d(\bx, \bb) \le \epsilon_0$. Let $f_{\ba,\bb}$ be any path in $\mathcal{F}_{\bx}$ that passes through $\ba$, $\bx$, and $\bb$. Because we chose $\ba$ and $\bb$ to be within $\epsilon_0$ of $x$ in $P$ and $\epsilon_0 \le \lambda_{f_{\ba, \bb}^*V}(n, \bx)$, we know that $\beta_V^{\ba, \bb} \ge n$. Thus for any $\epsilon_0 \le \inf \{\lambda_{f^*V}(n, \bx) \ | \, f \in \mathcal{F}_{\bx}\}$, we know that $\beta_V^{\ba,\bb}=\beta_{f^*V}^{\ba, \bb} \ge n$ for all $\ba \le \bx \le \bb$ such that $d(\ba, \bx) \le \epsilon_0$ and $d(\bx, \bb) \le \epsilon_0$. Thus $\lambda_V(n,\bx)\geq\epsilon_0$ for all $\epsilon_0\leq\inf\{\lambda_{f^*V}(n, \bx)\,|\,f\in\mathcal{F}_{\bx}\}$, so we may conclude $\lambda_V(n,\bx) \ge \inf\{\lambda_{f^*V}(n, \bx) \, | \, f \in \mathcal{F}_{\bx}\}$.

    On the other hand, choose a path $f \in \mathcal{F}_{\bx}$ and $\ba, \bb \in \R \times P$ on $f$ such that $d(\ba, \bx) \le \lambda_V(n,\bx)$ and $d(\bx, \bb) \le \lambda_V(n,\bx)$. Then $\beta_{f^*V}^{\ba,\bb}=\beta_V^{\ba, \bb} \ge n$, so $\lambda_{f^*V}(n, \bx) \ge \lambda_V(n,\bx)$. Thus, $\lambda_V(n,\bx) \le \lambda_{f^*V}(n, \bx)$ for all such $f$, hence $\lambda_V(n,\bx) \le \inf\{\lambda_{f^*V}(n, \bx) \, | \, f \in \mathcal{F}_{\bx}\}$.
\end{proof}

\subsection{Related work}

In addition to the aforementioned work in~\cite{Vipond20}, which defines and studies landscape functions for modules over $\R^n$, there are a number of other recent developments for the notion of landscapes.

In~\cite{KimMemoli2021generalizedrank}, the authors propose a new invariant of modules over arbitrary posets called the generalized rank invariant.
In~\cite{Dey2024computinggeneralizedrank}, the authors propose an algorithm for computing the generalized rank invariant for 2-parameter persistence modules using zig-zag persistence modules. 
This algorithm is then used for the computation of generalized rank invariant landscapes, proposed in~\cite{Xin2023GRIL}. 
Compared to the landscapes defined by Vipond~\cite{Vipond20}, which are defined using the usual notion of rank invariant and are computed over rectangles, the generalized rank invariant landscapes are defined over a generalization of a rectangle, which the authors refer to as an \textit{interval}.
The authors of~\cite{Xin2023GRIL} show that these generalized landscapes are equivalent to the generalized rank invariant over a covering of $\R^2$ by a set of intervals. 

In very recent works~\cite{flammer2024spatiotemporalpersistencelandscapes, dey2025quasi}, the authors define variants of landscapes for modules over a product of $\R$ and a zig-zag poset $\mathbb Z\mathbb Z$, which can be of interest for studying time-series data. 
In~\cite{flammer2024spatiotemporalpersistencelandscapes}, the authors use the notion of the generalized ranked invariant computed over a rectangular covering of $\R\times \mathbb Z\mathbb Z$ , whereas in~\cite{dey2025quasi}, the authors look at a different covering set, the elements of which are called \textit{worms}.

In comparison to the above works, our definition of landscape is based on a usual (not a generalized) rank invariant.
We are also further focusing on the landscapes for the posets $\R\times P$, where $P$ is a Boolean subset poset.

\section{Computational Aspects and Numerical Experiments}\label{s:experiments}

This section is split into two parts: \S\ref{ss:algorithms}, in which we explain and justify our computational methods for generalized persistence landscapes, and \S\ref{ss:numexs}, in which we present several examples in order of increasing complexity. Throughout this section, the poset $P$ is the Boolean subset poset $P_k$, the extended quasimetric $d_P$ on $P_k$ is the geodesic distance, and the extended quasimetric on $\R\times P_k$ is the sum quasimetric.

\subsection{Computational Aspects}\label{ss:algorithms}

Our computational pipeline relies on Proposition \ref{prop:supinf}, which reduces the problem of computing $\lambda_V(n,(r,p))$ to computing the minimum of a set of single-parameter landscape values. Two difficulties now arise:
\begin{enumerate}
    \item The complexity of computing $\lambda_V$ (equivalent to computing the number of paths passing through a given point $(r,p)\in\R\times P_k$ by Proposition \ref{prop:supinf}) is exponential in $k$. In practice, we restrict to a subposet $D=Z\times P_k\subseteq\R\times P_k$ and interpolate between the values of the landscape calculated at points in $D$ using only paths that turn at points in $D$ (see Definition \ref{def:lamD} below) to approximate those of $\lambda_V$. Therefore, we also want to choose $|Z|$ as small as possible, since our calculation is polynomial in $|Z|$.\footnote{Let $P_k$ be the poset of subsets of $[k]$ (including $\emptyset$), with $\preccurlyeq$ given by inclusion. Let $L$ be a maximal chain in $P_k$, and let $Z$ be a finite subposet of $\R$. The number of different maximal chains in $Z\times L$ is ${|Z|+|L|-2\choose |L|-1}={|Z|+k-1\choose k}$: we have $|L|=k+1$ because $L$ is maximal, and we are counting the number of down-right paths starting at the top left and ending at the bottom right of a $|Z|\times|L|$ dot grid. The number of maximal chains $L$ is $k!$ (from the set $[k]$ there are $n$ choices; from each of those choices, there are $k-1$, etc.), therefore the number of paths in $Z\times P$ is $k!{|Z|+k-1\choose k}=(|Z|+k-1)\cdots(|Z|)=O(|Z|^k)$.
    
    }

    The interpolation issue also arose for Vipond: see \cite[A.3]{Vipond20}. However, verifying that the values which we have computed for $\lambda_V$ at points in $D$ are correct is a new issue. This is because computing $\lambda_V(n,(r,p))$ requires us to take an infimum over the set of all paths in $\R\times P_k$ through $(r,p)$, yet to speed computation we are only considering a subset of paths. 
    We explain why we still obtain the correct values for $\lambda_V$ at points in $D$ in Lemma \ref{lem:sepdisc}, and we bound the error at points not in $D$ in Lemma \ref{lem:discerror} (which is analogous to Vipond's work).
    
    \item Standard tools for computing single-parameter persistence over $\R$ (barcodes) and landscapes rely on the fact that bars correspond to continuous families of vector spaces.
    Since we are mainly interested in the case when $P$ is an inclusion poset, which is finite, there is no family of vector spaces that corresponds to the relations $p\preccurlyeq p'$, for $p,p'\in P_k$.
    However, based on the application, we want to set the distance between $p$ and $p'$ to be non-zero, $d_P(p,p')>0$.
    See Remark~\ref{rmk:flats}. 
    In this case, we obtain the correct landscape values by extending the relevant bars, that is, as further explained in Remark \ref{rmk:algo}~3(c).
\end{enumerate}

\begin{definition}
    A \textbf{discretization} of $\R\times P_k$ is a poset $D=Z\times P_k$, where $Z$ is a finite subposet of $\R$. The inclusion $D\subseteq\R\times P_k$ induces weights on the edges of $D$ and hence the geodesic distance as in Example \ref{ex:gud}.
\end{definition}

Each discretization $D$ of $\R\times P_k$ induces a new landscape function $\lambda_V^D$.

\begin{definition}\label{def:lamD}
    Let $V$ be an $\R\times P_k$-module and let $D$ be a discretization of $\R\times P_k$. For $\bx\in D$, define
    \[
    \lambda_V^D(n,\bx)=\inf\{\lambda_{\bar f}(n,\bx)\,|\,f\in\mathcal{F}_\bx^D\},
    \]
    Here $f$ is a path through $\bx$ in $D$, $\mathcal{F}_\bx^D$ is the set of all such paths, and $\bar f$ is the path in $\R\times P$ with $\bar f|_D=f$. We say $\bar f$ is the \textbf{closure} of $f$. The function $\lambda_{\bar f}$ is the single-parameter landscape of $\bar f^*V$.
\end{definition}

To compute this in practice, there a number of algorithmic choices made, which are described below.
\begin{remark}\label{rmk:algo} 
    To compute $\R\times P_k$-landscapes, we follow these steps:
    \begin{enumerate}
        \item We begin by computing the Vietoris-Rips filtered simplicial complex $\VR(S)$ for every $S$ in the set $\mathcal{S}$ of individual classes and all possible unions of classes.
        \item Next, we choose a discretization $D$ of $\R \times P_k$. For the purposes of examples in this paper, we choose the finite subposet of $\R$ used to define $D$ to be a set of evenly spaced real values between the minimum and maximum filtration values found in $\VR(S)$ for all $S \in \mathcal{S}$.
        \item For each $\bx$ in $D$ and for each path $f \in \mathcal{F}^{D}_{\bx}$, use the following steps to compute $\lambda_{\bar{f}}(n, \bx)$:
        \begin{enumerate}
            \item Create a filtered simplicial complex $C_f$ for $f$. We know that $f$ starts in some single class $X_0 \in P_k$ and there exists some subsequence $\{m_i\}_{i=1}^{n}$ of $[n]$ and a resulting sequence of triples $(\alpha_i, X_{m_i}, w_i)_{i = 1}^n$ where at filtration value $\alpha_i$, $f$ crosses from the union $X_{m_0} \cup X_{m_1} \cup \dots \cup X_{m_{i-1}}$ to the union $X_{m_0} \cup X_{m_1} \cup \dots \cup X_{m_{i-1}} \cup X_{m_i}$, and the edge in $P_k$ from $X_{m_0} \cup \dots \cup X_{m_{i-1}}$ to $X_{m_0} \cup \dots \cup X_{m_i}$ has weight $w_i$. Note that here we are utilizing the correspondence between our covering sets $X_i \in \mathcal{X}$ that define our classes and the source nodes in $P_k$. This correspondence follows on to other elements in $P_k$: the join of the poset elements $X_i$ and $X_j$ corresponds to the union $X_i \cup X_j$, and we will primarily use the latter notation here.
            \begin{enumerate}
                \item Start with $C_f = \VR(X_{m_0})$.
                \item For the first triple $(\alpha_1, X_{m_1}, w_1)$, we first create a modified copy $\overline{\VR}(X_{m_0} \cup X_{m_1})$ of $\VR(X_{m_0} \cup X_{m_1})$: for each simplex $\sigma$ in $\VR(X_{m_0} \cup X_{m_1})$, if it has filtration value $\operatorname{filt}(\sigma) < \alpha_1$, set $\operatorname{filt}(\sigma) = \alpha_1 + w_1$, and if $\operatorname{filt}(\sigma) \ge \alpha_1$, set $\operatorname{filt}(\sigma) = \operatorname{filt}(\sigma) + w_1$. For each simplex $\gamma \in C_f$, if $\operatorname{filt}(\gamma) > \alpha_1$, set $\operatorname{filt}(\gamma) = \operatorname{filt}(\gamma) + w_1$. Finally, for every simplex $\delta \in \overline{\VR}(X_{m_0} \cup X_{m_1})$, insert $\delta$ into $C_f$ if and only if $\delta \not \in C_f$. This creates a filtered simplicial complex that looks like $X_{m_0}$ up to filtration value $\alpha_1$, has a ``gap'' from $\alpha_1$ to $\alpha_1 + w_1$, then looks like a shifted version of $X_{m_0} \cup X_{m_1}$ for higher filtration values.
                \item The steps for the remaining triples are similar, but must account for the gaps of length $w_i$ that we are inserting into the filtration values for $C_f$. For a given $i$, we again create a modified copy $\overline{\VR}(X_{m_0} \cup \dots \cup X_{m_i})$ of $\VR(X_{m_0} \cup \dots \cup X_{m_i})$: for each simplex $\sigma \in \VR(X_{m_0} \cup \dots \cup X_{m_i})$, if $\operatorname{filt}(\sigma) < \alpha_i + \sum_{j = 1}^{i - 1}w_j$, set $\operatorname{filt}(\sigma) = \alpha_i + \sum_{j=1}^{i} w_j$ and if $\operatorname{filt}(\sigma) \ge \alpha_i + \sum_{j=1}^{i-1}w_j$ set $\operatorname{filt}(\sigma) = \operatorname{filt}(\sigma) + \sum_{j=1}^{i} w_j$. For every simplex $\gamma \in C_f$, if $\operatorname{filt}(\gamma) > \alpha_i + \sum_{j = 1}^{i - 1} w_j$, set $\operatorname{filt}(\gamma) = \operatorname{filt}(\gamma) + w_i$. Finally, for each simplex $\delta \in \overline{\VR}(X_{m_0} \cup \dots \cup X_{m_i})$ insert $\delta$ into $C_f$ if and only if $\delta \not \in C_f$.
            \end{enumerate}
            \item Compute the persistence barcode $B(C_f)$ for $C_f$.
            \item\label{item:c} The ``gaps'' of lengths $w_i$ that we inserted correspond to edges in the poset that do not have any vector spaces along them (unlike the edges in the $\R$ direction, which are dense with infinitely many vector spaces). For landscape computations, this means that the requirement that the rank of maps be at least $n$ is vacuously true along that edge; to emulate this effect while still making use of existing computational tools for one-dimensional landscapes over $\R$, we modify the computed barcode directly as follows. For each triple $(\alpha_i, X_{m_i}, d_i)$ and each bar $(a, b) \in B(C_f)$, if $a = \alpha_i + \sum_{j = 1}^{i} w_j$, replace the bar $(a, b)$ with $(a - w_i, b)$. That is, if a bar starts precisely at the filtration value corresponding to the path arriving at a new union, we extend that bar back to the beginning of the edge the path just traversed.
            \item Compute the one-dimensional landscape $\lambda_{B(C_f)}(n, y)$ from the barcode $B(C_f)$.
            \item To find the path landscape value $\lambda_{\bar{f}}(n, \bx)$, we first coordinatize $\bx = (r, p)$ and note that $p = (X_{m_0} \cup \dots \cup X_{m_i}) \in P_k$ for some $i$. Then we have that $\lambda_{\bar{f}} (k, \bx) = \lambda_{\bar{f}}(k, (r, p)) = \lambda_{B(C_f)}(k, r + \sum_{j=1}^{i-1} w_j)$, as we simply need to adjust our filtration value for the gaps we introduced.
        \end{enumerate}
        \item For a finite discretization $D$ and finite poset $P_k$, there are only finitely many paths $f$ through a point $\bx$, so to find the $\R\times P_k$-landscape value for $\bx$, we simply take the minimum of the path landscape values; that is,
        \[
            \lambda^D_V(n, \bx) = \min_{f \in \mathcal{F}_\bx^D} \lambda_{\bar{f}}(n, \bx).
        \]
    \end{enumerate}
    In practice, to improve the computational efficiency, we leverage the fact that each path passes through many points in $D$ by first finding all paths through $D$ and computing their path landscapes as described above, then simply looking up values from the appropriate paths when computing the $\R\times P_k$-landscape value at a particular point in the discretization.
\end{remark}

\begin{remark}
    In step 3(d) of the algorithm described in the previous remark, it is important to note that the difference between the maximum and minimum filtration values used when computing the one-dimensional landscape $\lambda_{B(C_f)}(k, x)$ is in general larger than the difference between max and min filtration values used for $D$. In particular, this difference is expanded by the sum of the relevant edge lengths $w_i$ from $P_k$. When those edge lengths are significantly larger than the original difference between max and min filtration values used for $D$, the one-dimensional discretization that TDA tools (such as the software package gudhi) use to compute one-dimensional landscapes must be refined sufficiently to be fine enough to give good results on the scale of the discretization $D$. As an example, if the filtration range of $D$ is from $0$ to $2$, but the sum of the $d_i$ is $999$, then a one-dimensional discretization with $1002$ points will only yield useful values at $0, 1,$ and $2$ in $D$ and have to be interpolated in between. In such a case, we would want to refine the one-dimensional discretization to have on the order of 10 to 100 thousand points.
\end{remark}


We now prove that the methods of Remark \ref{rmk:algo} calculate $\lambda_V$ correctly at points in $D$. This result (as well as the next one) hold for all $P$, not just $P_k$, and are stated in that generality.

\begin{lemma}\label{lem:sepdisc}
    Let $V$ be an $(\R\times P)$-module and let $D=Z\times P$ be a discretization of $\R\times P$. Then
    \[
    \lambda_V^D(n,\bx)=\lambda_V(n,\bx)
    \]
    for all $\bx\in D$.
\end{lemma}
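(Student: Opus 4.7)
The plan is to prove the two inequalities $\lambda_V^D(n,\bx) \geq \lambda_V(n,\bx)$ and $\lambda_V^D(n,\bx) \leq \lambda_V(n,\bx)$ separately, using Proposition \ref{prop:supinf} as the key tool.

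The inequality $\lambda_V^D(n,\bx) \geq \lambda_V(n,\bx)$ is essentially immediate. Since $\bx \in D$, every path $g \in \mathcal{F}_{\bx}^D$ in $D$ has a closure $\bar g$ which is a path in $\R \times P$ passing through $\bx$, so $\bar g \in \mathcal{F}_{\bx}$. Thus
\[
\{\lambda_{\bar g^*V}(n,\bx) : g \in \mathcal{F}_{\bx}^D\} \subseteq \{\lambda_{f^*V}(n,\bx) : f \in \mathcal{F}_{\bx}\},
\]
and taking infima, combined with Proposition \ref{prop:supinf}, yields the desired inequality.

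For the reverse inequality $\lambda_V^D(n,\bx) \leq \lambda_V(n,\bx)$, the plan is to adapt the strategy used in the proof of Proposition \ref{prop:supinf}. Fix $\epsilon_0 \leq \lambda_V^D(n,\bx)$; the goal is to show that $\beta_V^{\ba,\bb} \geq n$ for every $\ba \preccurlyeq \bx \preccurlyeq \bb$ in $\R \times P$ with $d(\ba,\bx), d(\bx,\bb) \leq \epsilon_0$. Given such $\ba$ and $\bb$, the key step is to construct a path $g_{\ba,\bb} \in \mathcal{F}_{\bx}^D$ whose closure $\bar g_{\ba,\bb}$ passes through $\ba$, $\bx$, and $\bb$. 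Once this is done, the hypothesis $\lambda_{\bar g_{\ba,\bb}^*V}(n,\bx) \geq \epsilon_0$, together with the distance bounds, forces $\beta_{\bar g_{\ba,\bb}^*V}^{\ba,\bb} \geq n$; since $\bar g_{\ba,\bb}^*V$ is the restriction of $V$ along the closure, this equals $\beta_V^{\ba,\bb}$, completing the argument. Taking $\epsilon_0 \to \lambda_V^D(n,\bx)$ and combining with the first inequality will then yield equality.

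The main obstacle is constructing the path $g_{\ba,\bb}$ so that its closure contains the arbitrary comparable points $\ba, \bb$. The key observation is that the $P$-coordinates of $\ba, \bx, \bb$ form a chain in $P$ (by $\ba \preccurlyeq \bx \preccurlyeq \bb$), and that $\bx \in D$ anchors the path at a $D$-vertex. The path $g_{\ba,\bb}$ can be selected by prescribing the $P$-values at points of $Z$ consistent with the chain: for each $z \in Z$ with $z$ to the left of the $\R$-coordinate of $\ba$ we set the $P$-value below that of $\ba$, and symmetrically on the right side for $\bb$, with the closure's freedom between consecutive $Z$-points absorbing the jumps necessary to land on $\ba$ and $\bb$ at the prescribed $\R$-coordinates. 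This monotonicity-preserving construction, combined with the factorization argument underlying Proposition \ref{prop:supinf}, produces the required inequality.
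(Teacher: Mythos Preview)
Your overall approach matches the paper's: both directions rest on Proposition~\ref{prop:supinf}, the inequality $\lambda_V^D \geq \lambda_V$ comes from the inclusion of path sets, and the reverse inequality is obtained by constructing, for arbitrary $\ba \preccurlyeq \bx \preccurlyeq \bb$, a $D$-path whose closure passes through all three points (the paper phrases this last step as a contradiction, but the content is the same).

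One point in your sketch deserves correction. The closure $\bar f$ is defined as \emph{the} path in $\R\times P$ with $\bar f|_D = f$, so it is uniquely determined and has no ``freedom between consecutive $Z$-points'' to absorb $P$-jumps at non-$Z$ values: every $P$-direction edge of $\bar f$ occurs at a point of $Z$, and between consecutive $Z$-points the closure merely fills in the horizontal $\R$-segment at a fixed $P$-level. Consequently, hitting $\ba=(r_a,a)$ requires the $D$-path to already be at $P$-level $a$ on the two $Z$-points bracketing $r_a$. The paper's construction sidesteps this cleanly by performing \emph{all} $P$-direction moves at the anchor $r_x\in Z$: first traverse $\{r_x\}\times f_P$ from $x$ up to $b$ (respectively down to $a$), then move horizontally in $\R$ out past $r_b$ (respectively $r_a$) to the next available $Z$-point. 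This guarantees that $(r_b,b)$ and $(r_a,a)$ lie on horizontal segments of $\bar f$, and it works uniformly regardless of how $Z$ sits relative to $r_a$ and $r_b$.
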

\begin{proof}
    Because $\mathcal{F}^D_\bx\subseteq\mathcal{F}_\bx$, by Proposition \ref{prop:supinf} we immediately have
    \[
    \lambda_V^D(n,\bx)\geq\lambda_V(n,\bx).
    \]
    Therefore, our task is to prove the reverse inequality: that there are no $\ba\preccurlyeq\bx\preccurlyeq\bb$ with $d(\ba,\bx)$ or $d(\bx,\bb)$ strictly less than $\lambda_V^D(n,\bx)$ and for which $\beta_V^{\ba,\bb}<n$.

    Assume otherwise. Let $\delta=\lambda_V^D(n,\bx)$. There is a path in $D$ from $\bx=(r_x,x)$ whose closure passes through $\bb=(r_b,b)$: let $f_P$ be any path in $P$ from $x$ to $b$ centered at $x$. Then $\{r_x\}\times f_P\in\mathcal{F}^D_\bx$ because $D$ is a product $Z\times P$. Now let $s$ be the smallest element of $Z$ which is greater than or equal to $r_b$, and concatenate the (appropriate translation of the) path
    \[
    t\mapsto(t,b), \quad r_x\leq t\leq s
    \]
    to the end of $\{r_x\}\times f_P$. If no such $s$ exists, use the bounds $r_x\leq t<\infty$. This is the closure of a path in $D$. Similarly, the point $\ba$ is contained in the closure of a path in $D$. Concatenating both paths provides us with a path $f$ in $D$ whose closure $\bar f$ is centered at $\bx$ and passes through both $\ba$ and $\bb$.

    By assumption, both $\ba$ and $\bb$ are strictly contained in the $\delta$-ball around $\bx$ and $\beta_V^{\ba,\bb}<n$. But that contradicts the fact that $\lambda_{\bar f}(n,\bx)\geq \delta$, since $f\in\mathcal{F}^D_\bx$.
\end{proof}

Our code does not actually compute $\lambda_V(n,\bx)$ for $\bx\not\in D$, but our plots do indicate those values by the straight-line interpolation from $(r,\lambda_V^D(n,(r,p))$ to $(r',\lambda_V^D(n,(r',p))$: for $(r,p), (r',p)\in D$ with $r<r'$ such that $r'$ covers $r$ (i.e., there are no other $z\in Z$ satisfying $r<z<r'$) and any $r<s<r'$, define our approximation $\check\lambda_V$ by
\[
\check\lambda_V(n,(s,p))\coloneqq\frac{\lambda_V^D(n,(r',p))-\lambda_V^D(n,(r,p))}{r'-r}(s-r)+\lambda_V^D(n,(r,p)).
\]

Graphically, the function $\check\lambda_V$ takes values on the diagonal of a potential rectangle of true values for $\lambda_V$, as explained in the proof of the next lemma.

\begin{lemma}\label{lem:discerror} Let $V$ be an $(\R\times P)$-module and let $D=Z\times P$ be a discretization of $\R\times P$. Then for all $(s,p)\in\R\times P$,
\[
|\lambda_V(n,(s,p))-\check\lambda_V(n,(s,p))| \leq r'-r,
\]
where $r<s<r'$ and $r<r'$ are consecutive in $Z$ (i.e., there are no other $z\in Z$ satisfying $r<z<r'$).
\end{lemma}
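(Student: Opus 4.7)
The plan is to combine two ingredients already in place: Lemma~\ref{lem:sepdisc}, which guarantees $\lambda_V^D(n,\cdot)$ and $\lambda_V(n,\cdot)$ agree on $D$; and item~3 of Lemma~\ref{lem:properties}, which says $\lambda_V(n,(r,p))$ is $1$-Lipschitz in the $r$ coordinate (since we have fixed the sum quasimetric on $\R\times P$). Writing $\alpha \coloneqq \lambda_V(n,(r,p)) = \lambda_V^D(n,(r,p))$ and $\beta \coloneqq \lambda_V(n,(r',p)) = \lambda_V^D(n,(r',p))$, the $1$-Lipschitz property applied at the two discretization endpoints yields the slope bound $|\beta - \alpha| \leq r' - r$.

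Next, I observe that $\check\lambda_V(n,(\cdot,p))$ is affine on $[r,r']$ with slope $(\beta-\alpha)/(r'-r)$ of absolute value at most $1$, so it is itself $1$-Lipschitz on $[r,r']$, and it agrees with $\lambda_V(n,(\cdot,p))$ at the endpoints $s=r$ and $s=r'$. Consequently the difference $g(s) \coloneqq \lambda_V(n,(s,p)) - \check\lambda_V(n,(s,p))$ is a $2$-Lipschitz function of $s$ on $[r,r']$ with $g(r) = g(r') = 0$. From the two anchor estimates $|g(s)| \leq 2(s-r)$ and $|g(s)| \leq 2(r'-s)$, I conclude
\[
|g(s)| \;\leq\; 2\min\{s-r,\, r'-s\} \;\leq\; r' - r,
\]
where the last inequality uses that $\min\{s-r, r'-s\} \leq (r'-r)/2$. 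This is the desired bound.

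There is no substantive obstacle here: once Lemma~\ref{lem:sepdisc} and the $1$-Lipschitz property of Lemma~\ref{lem:properties}(3) are in hand, the argument is purely a ``two $1$-Lipschitz functions agreeing at the endpoints of an interval'' computation. The only thing to keep track of is that the $1$-Lipschitz property genuinely requires the sum quasimetric (as fixed at the start of Section~\ref{s:experiments}), which is why the statement is made in that setting. If one wished to sharpen the constant, one would need to exploit extra structure of $\lambda_V$ beyond Lipschitz-ness (e.g.\ some form of concavity or $1$-Lipschitz-ness of the derivative), but no such improvement is claimed or needed here.
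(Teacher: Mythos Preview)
Your proof is correct and rests on exactly the same two ingredients as the paper's---the $1$-Lipschitz property of $\lambda_V$ in the $\R$-coordinate (Lemma~\ref{lem:properties}(3)) and the agreement of $\lambda_V$ with $\check\lambda_V$ at the discretization points (Lemma~\ref{lem:sepdisc}). The paper packages the same bound geometrically, confining $\lambda_V(n,(s,p))$ to a rhombus with sides of slope $\pm1$ through $(r,\alpha)$ and $(r',\beta)$ and reading off the vertical extent of its bounding square, whereas your ``$g$ is $2$-Lipschitz with zeros at the endpoints, hence $|g(s)|\le 2\min\{s-r,r'-s\}\le r'-r$'' is the direct analytic version of the same computation.
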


\begin{remark}
    Compare this bound to \cite[\S A.3]{Vipond20}, setting $i=1$. Our result provides the analogous (i.e., when generalizing from $\R^{d-1}$ to $P$) global bound on the difference between $\lambda_V$ and $\check\lambda_V$ by taking the maximum over all covering pairs $r<r'$ in $Z$.
\end{remark}

\begin{proof}
First note that by the $1$-Lipschitz property (Lemma~\ref{lem:properties}), we have
    \[
    -1\leq \frac{\partial}{\partial r}\,\lambda_V(n,(s,p))\leq 1.
    \]
    Now let $r<r'\in Z$, and let $l=\lambda_V(n,(r,p))$ and $l'=\lambda_V(n,(r',p))$. The largest possible value for $\lambda_V(n,(s,p))$ for any $r<s<r'$ is achieved by the $\lambda$-values on the $s\lambda$-plane given by the minimum of the line of slope $1$ through $(r,l)$ and the line of slope $-1$ through $(r',l')$. Analogously, the lowest value is achieved by the maximum of the line of slope $-1$ through $(r,l)$ and the line of slope $1$ through $(r',l')$.

    The interpolation $\check\lambda_V$ takes values on the diagonal between $(r,l)$ and $(r',l')$. Therefore, for $r<s<r'$, the difference
    \[
    |\lambda_V(n,(s,p))-\check\lambda_V(n,(s,p))|
    \]
    is at most the maximum of the vertical difference between the diagonal and each boundary of the rectangle drawn in Figure \ref{fig:bounds}. Those differences are necessarily all less than the vertical height of the rectangle. A rectangle whose sides have slope $\pm1$ has equal vertical height and horizontal width (that is, its bounding box is a square): see Figure \ref{fig:bounds}.

    \begin{figure}[h]
\centering
    \begin{tikzpicture}
\draw[black, very thick, dashed] (-2,0) -- (0,2);
\draw[black, very thick, dashed] (0,2) -- (4,-2);
\draw[black, very thick] (4,-2) -- (2,-4);
\draw[black, very thick] (2,-4) -- (-2,0);
\filldraw[black] (-2,0) circle (2pt) node[anchor=east]{$(r,l)$};
\filldraw[black] (4,-2) circle (2pt) node[anchor=west]{$(r',l')$};
\draw[black, thick, dotted] (-2,0) -- (4,-2) node[midway, below]{$\check\lambda_{\R\times P}(k,(s,p))$};
\draw[black] (-2,-4) -- (-2,2);
\draw[black] (-2,2) -- (4,2);
\draw[black] (4,2) -- (4,-4);
\draw[black] (4,-4) -- (-2,-4);
\end{tikzpicture}
\caption{The thick dashed line indicates the upper bound of $\lambda_V(n,(s,p)$ between $(r,l)$ and $(r',l')$, while the thick solid line indicates its lower bound. The dotted diagonal line indicates the plotted value $\check\lambda_V(n,(s,p))$ for $r<s<r'$. The difference between $\check\lambda_V$ and the actual value of $\lambda_V$ is bounded by the vertical height of the bounding box (depicted in thin lines); because all angles are right angles, the triangles between the bounding box and the rectangle are right isosceles, and so the bounding box is a square.}
\label{fig:bounds}
\end{figure}
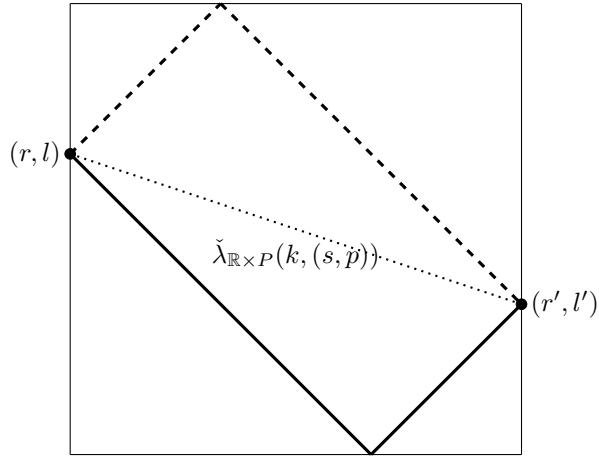

    Finally, the horizontal width of the bounding box is precisely $|r'-r|$, as desired.
\end{proof}

\subsection{Numerical Examples}\label{ss:numexs}

In this section we present several examples, organized by complexity of the point cloud $X$. For simplicity, we focus on the case $k=2$ and $X\subseteq\R^2$ (except in \S\ref{ss:realcode}). We start with first principles in \S\ref{ss:byhand}, where we describe several $(\R\times P_2)$-landscapes computed directly from Definition \ref{def:2}. Then we move into a more abstract discussion of the ways in which generalized persistence landscapes and single-parameter persistence landscapes differ: see Remark \ref{rmk:flats} and Lemma \ref{lem:dPrange}. In \S\ref{ss:syntheticcode} we turn to more complicated data. For these calculations, we rely on our code as presented in \S\ref{ss:algorithms}. Example \ref{ex:synthetic} illustrates Remark \ref{rmk:flats} and Lemma \ref{lem:dPrange} using several simple examples with $|X_1|=|X_2|=4$; the relevant features are very easy to identify. Example \ref{ex:noise} illustrates the robustness of the labeled persistentce landscape to noise. Finally, in \S\ref{ss:realcode},  we perform a relatively straightforward experiment on the MNIST dataset, a commonly-used real dataset used in machine learning.

As we focus on the case $k=2, X\subseteq\R^2$, we use the notation
\[
\mathrm{LPH}_j(r,S)=\begin{cases}
    \mathrm{LPH}_j(\mathcal{X})(r,\{1\}) & \text{ if } S=X_1,
    \\\mathrm{LPH}_j(\mathcal{X})(r,\{2\}) & \text{ if } S=X_2,
    \\\mathrm{LPH}_j(\mathcal{X})(r,\{1,2\})&\text{ if }S=X_1\cup X_2=X
\end{cases}
\]
to denote the components of the labeled persistent homology module of $(X,d_X,L_X)$, where $d_X$ is the Euclidean distance restricted to $X$ and $L_X=(X_1,X_2)$, with $X_1\cup X_2=X$ and $X_1\cap X_2=\emptyset$. 
Furthermore, because it is clumsy to repeat each of the three cases above in every instance we need to refer to the correspondence between elements of $P_2$ and subsets of $X$, we occasionally abuse notation and indicate elements of $\R\times P_2$ by $(r,S)$ rather than $(r,p)$, where $p$ is the element of $P_2$ corresponding to $S$ as above.

\subsubsection{Direct Calculations}\label{ss:byhand}

We first present an example of a generalized persistence landscape computed by hand, together with the corresponding output from our code. The details (as well as two simpler examples) are deferred to Appendix \ref{app:calc}. Then, in Remark \ref{rmk:flats}, we list the three basic ways an $(\R\times P_2)$-landscape restricted to either $X_1, X_2$, or $X$, respectively, can differ from the single-parameter landscape of either $X_1, X_2$, or $X$, respectively. 
Finally, Lemma \ref{lem:dPrange} provides an estimate on those $d_P$ for which certain $\lambda_V|_{\cup_{i\in p}X_i}$ must equal their original single-parameter persistence landscapes, giving us an estimate for the $d_P$ for which the differences explained in Remark \ref{rmk:flats} might arise.

\begin{example}\label{ex:byhand1}

Let $X_1=\{(0,0),(0.4,0),(1,0)\}$ and $X_2=\{(0,1),(0.4,1),(1,1)\}$. See Figure \ref{fig:AB}.

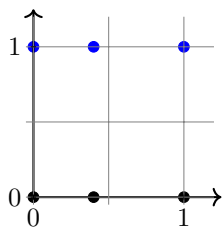
\begin{figure}[h]
\centering
    \begin{tikzpicture}
\filldraw[black] (0,0) circle (2pt);
\filldraw[black] (0.8,0) circle (2pt);
\filldraw[black] (2,0) circle (2pt);
\filldraw[blue] (0,2) circle (2pt);
\filldraw[blue] (0.8,2) circle (2pt);
\filldraw[blue] (2,2) circle (2pt);
\draw[thick,->] (0,0) -- (2.5,0);
\draw[thick,->] (0,0) -- (0,2.5);
\draw[step=1cm,gray,very thin] (-.1,-.1) grid (2.4,2.4);
   \draw (0 cm,1pt) -- (0 cm,-1pt) node[anchor=north] {$0$};
   \draw (2 cm,1pt) -- (2 cm,-1pt) node[anchor=north] {$1$};
    \draw (1pt,0 cm) -- (-1pt,0 cm) node[anchor=east] {$0$};
    \draw (1pt,2 cm) -- (-1pt,2 cm) node[anchor=east] {$1$};
\end{tikzpicture}
\caption{We have depicted $X_1$ in black and $X_2$ in blue.}
\label{fig:AB}
\end{figure}

    A diagram indicating the image of the $(\R\times P_2)$-module $V(r,S)=\mathrm{LPH}_0(r,S)$ is below (omitting $\R\times\{\emptyset\}$). We have used the notation $V^I_S\coloneqq V(r,S)$, where $r\in I$, and $I$ is an open or half-open interval between values at which the rank of $\mathrm{LPH}_0(r,S)$ changes for $S$ equal to any of $X_1, X_2$, or $X$ (the $V^I_S$ are therefore isomorphic for all $r\in I$). We have also removed the labels $V^{(-\infty,0)}_S$ and $V^{[d,\infty)}_S$ for space, where $d$ is the diameter of $S$, since by our convention both are zero.
    \[
    \begin{tikzcd}
        0 \arrow[r] \arrow[d] & V^{[0,0.4)}_{X_1}=\mathbb{F}^3 \arrow[r, "\mathbf{V}_{X_1}^{0.4}"] \arrow[d, "\begin{pmatrix}I_3\\0\end{pmatrix}"] & V^{[0.4,0.6)}_{X_1}=\mathbb{F}^2 \arrow[d, "\begin{pmatrix}I_2\\0\end{pmatrix}"] \arrow[r, "\mathbf{V}_{X_1}^{0.6}"] & V^{[0.6,1)}_{X_1}=\mathbb{F} \arrow[r] \arrow[d, "\begin{pmatrix}1\\0\end{pmatrix}"] & 0 \arrow[d]\arrow[r] & 0 \arrow[d]
        \\0 \arrow[r] & V^{[0,0.4)}_{X}=\mathbb{F}^6 \arrow[r, "\mathbf{V}_{X}^{0.4}"] & V^{[0.4,0.6)}_{X}=\mathbb{F}^4 \arrow[r, "\mathbf{V}_{X}^{0.6}"] & V^{[0.6,1)}_{X}=\mathbb{F}^2 \arrow[r, "\mathbf{V}_{X}^1"] & V^{[1,\sqrt{2})}_{X}=\mathbb{F} \arrow[r] & 0
        \\0 \arrow[r] \arrow[u] & V^{[0,0.4)}_{X_2}=\mathbb{F}^3 \arrow[r, "\mathbf{V}_{X_2}^{0.4}"] \arrow[u, "\begin{pmatrix}0\\I_3\end{pmatrix}"] & V^{[0.4,0.6)}_{X_2}=\mathbb{F}^2 \arrow[r, "\mathbf{V}_{X_2}^{0.6}"] \arrow[u, "\begin{pmatrix}0\\I_2\end{pmatrix}"] & V^{[0.6,1)}_{X_2}=\mathbb{F} \arrow[r] \arrow[u, "\begin{pmatrix}0\\1\end{pmatrix}"] & 0 \arrow[u]\arrow[r] & 0 \arrow[u]
    \end{tikzcd}
    \]
    We have set $\mathbf{V}^{\min I'}_S\coloneqq V((r,S)\preccurlyeq(r',S))$ for $r\in I, r'\in I'$ with $\sup I=\min I'$. These maps are:
    \begin{align*}
        \mathbf{V}_{X_1}^{0.4}=\mathbf{V}_{X_2}^{0.4}&=\begin{pmatrix}1&1&0\\0&0&1\end{pmatrix}
        \\\mathbf{V}_{X_1}^{0.6}=\mathbf{V}_{X_2}^{0.6}&=\begin{pmatrix}1&1\end{pmatrix}
        \\\mathbf{V}_{X}^{0.4}&=\mathbf{V}_{X_1}^{0.4}\oplus \mathbf{V}_{X_2}^{0.4}
        \\\mathbf{V}_{X}^{0.6}&=\mathbf{V}_{X_1}^{0.6}\oplus \mathbf{V}_{X_2}^{0.6}
        \\\mathbf{V}_{X}^1&=\begin{pmatrix}1&1\end{pmatrix}.
    \end{align*}

    Let $\lambda_S(n,r)\coloneqq\lambda_V(n,(r,S))$. Using the method outlined in Appendix \ref{app:calc}, we calculate:
    \begin{align*}
        \lambda_{X_1}(1,r)=\lambda_{X_2}(1,r)&=\min\{r,1-r\},
        \\\lambda_{X_1}(2,r)=\lambda_{X_2}(2,r)&=\min\{r,0.6-r\},\\
        \lambda_{X_1}(3,r)=\lambda_{X_2}(3,r)&=\min\{r,0.4-r\},
    \end{align*}
    and all other $\lambda_{X_1}(n,r)$ and $\lambda_{X_2}(n,r)$ are zero. Furthermore, setting $d_1=d_P(\{1\},\{1,2\})$ and $d_2=d_P(\{2\},\{1,2\})$, we have
    \begin{align*}
        \lambda_{X}(1,r)&=\min\{\sqrt{2}-r,r\},\\
        \lambda_{X}(2,r)&=\begin{cases}
        \min\{1-r,r,\max\{d_1,0.6-r\},\max\{d_2,0.6-r\}\} & \text{ if $r\in[0,0.6)$, and}
        \\\min\{1-r,d_1,d_2\} & \text{ if $r\in[0.6,1)$, and }
        \\0 & \text{ else,}\end{cases}\\
        \lambda_{X}(3,r)&=\begin{cases}
        \min\{0.6-r,r,\max\{d_1,0.4-r\},\max\{d_2,0.4-r\}\} & \text{ if $r\in[0,0.4)$, and}
        \\\min\{0.6-r,d_1,d_2\} & \text{ if $r\in[0.4,0.6)$, and }
        \\0 & \text{ else,}\end{cases}\\
        \lambda_{X}(4,r)&=\begin{cases}\min\{d_1,d_2,0.6-r,r\} & \text{ if $r\in[0,0.6)$, and }
        \\0 & \text{ else,}
        \end{cases}\\
        \lambda_{X}(5,r)&=\lambda_{X}(6,r)=\begin{cases} \min\{d_1,d_2,0.4-r,r\} &\text{ if $r\in[0,0.4)$, and}
        \\0 & \text{ else.}\end{cases}
    \end{align*}
    All other $\lambda_{X}(n,r)$ are zero. 
    See Figure \ref{fig:ex3} for the output of our code when $d_1=d_2=0.1$.
    
    \begin{figure}[htp]
        \centering
        \begin{subfigure}[b]{0.28\textwidth}
    \centering
    \includegraphics[width=\textwidth]{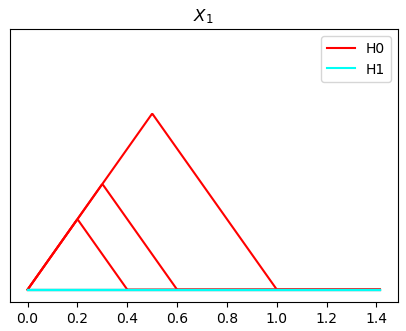}
\end{subfigure}
\hspace{1cm}
\begin{subfigure}[b]{0.28\textwidth}
    \centering
    \includegraphics[width=\textwidth]{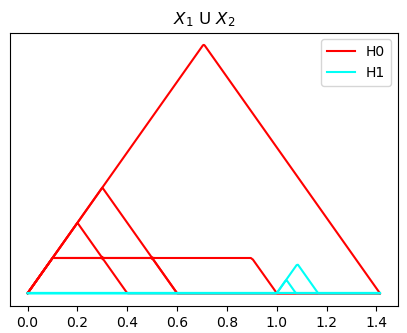}
\end{subfigure}
\hspace{1cm}
\begin{subfigure}[b]{0.28\textwidth}
    \centering
    \includegraphics[width=\textwidth]{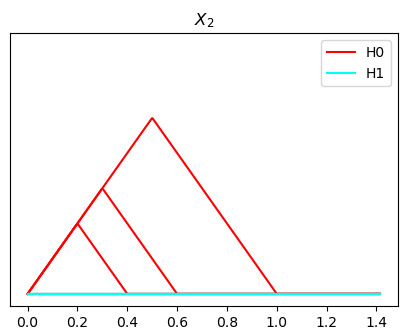}
\end{subfigure}
        \caption{From left to right, we depict the plots $\lambda_{X_1}, \lambda_{X_2}$, and $\lambda_{X}$ for the sets $X_1=\{(0,0),(0.4,0),(1,0)\}$ and $X_2=\{(0,1),(0.4,1),(1,1)\}$, and for the module $V(r,S)=\mathrm{LPH}_0(r,S)$. The plots of $\lambda_{X_i}(n,\cdot)$ with $n=1,2,3$ are the three obvious triangles; to obtain $\lambda_X(n,\cdot)$ for $n=1,\dots,6$, take the $n$-max of the three red triangles and \textit{two copies of the red trapezoid} (one for each $X_i$). The cyan graphs are relevant for the module $(r,S)\mapsto \mathrm{LPH}_1(r,S)$, which we have not calculated in the text.}
        \label{fig:ex3}
    \end{figure}
    
\end{example}

\begin{remark}[Interpretation of the flat regions]\label{rmk:flats} Using the same notation as in the previous example, there are three fundamental ways in which $\lambda_{X_1}, \lambda_{X_2}$, or $\lambda_{X}$, respectively, can differ from the single-parameter persistence landscape of $X_1, X_2$, or $X$, respectively, for a given rank $n$. We only analyze the simplest cases, when just one feature is involved.

Recall that the slope of a single-parameter persistence landscape is always either positive or negative one on the support of the function \cite{Bubenik15}. While our landscapes also contain regions of slope $\pm1$, they also contain flat regions, where $r\mapsto\lambda_V(n,(r,x))$ is nonzero and has slope zero. This occurs because the metric on $P_k$ is discrete, so it is possible for the set of $\epsilon$ over which we are taking the supremum used to define $\lambda_V$ to be constant in $r$. In the single-parameter case, this set always varies as $r$ varies, and standard computational methods for persistence landscapes rely on that fact. See Remark~\ref{rmk:algo}~(\ref{item:c}) for our workaround.

\begin{enumerate}
    \item There is an interval on which $\lambda_{X}$ has slope negative one, then slope zero, then slope negative one. Such an interval arises if, with $S=X_i$, we have $d_P({i},\{1,2\})<\epsilon$ and the rank of the map $V((r,\{i\})\preccurlyeq(r,\{1,2\}))$ is less than $n$, while simultaneously the ranks of the linear maps $V((r-\delta,\{1,2\})\preccurlyeq(r,\{1,2\}))$ for all $0<\delta<\epsilon$ are at least $n$. If $S=X_1$, the ``additional" rank in $V(r,\{1,2\})$ comes from cycles including points from $X_2$, and vice versa.
    
    In Example~\ref{ex:byhand1}, such a situation appears in the commutative diagram below, for $0.6\leq r<1$ and any $d_P(\{1\},\{1,2\})\leq 1-r$,
    \[
    \begin{tikzcd}[ampersand replacement=\&]
        V^{[0.4,0.6)}_{X_1}=\mathbb{F}^2 
        \arrow[d, "{\begin{psmallmatrix}I_2\\0\end{psmallmatrix}}"'] 
        \arrow[r, "{\begin{psmallmatrix}1 & 1\end{psmallmatrix}}"] 
        \& V^{[0.6,1)}_{X_1}=\mathbb{F} 
        \arrow[d, "{\begin{psmallmatrix}1\\0\end{psmallmatrix}}"] \arrow[r]
        \& 0 \arrow[d]
        \\V^{[0.4,0.6)}_{X}=\mathbb{F}^4 
        \arrow[r, "{\begin{psmallmatrix}1&1&0&0\\0&0&1&1\end{psmallmatrix}}"'] 
        \& V^{[0.6,1)}_{X}=\mathbb{F}^2 \arrow[r, "{\begin{psmallmatrix}1&1\end{psmallmatrix}}"']
        \& V^{[1,\sqrt{2})}_{X}=\mathbb{F}
    \end{tikzcd}
    \]
    as well as the analogous square for $X_2$. The map $V((r,X_1)\preccurlyeq(r,X))$ has rank one, meaning $\lambda_{X}(r)=\lambda_V(2,(r,X))\leq d_P(\{1\},\{1,2\})$. The other visible rank one map through $(r,X)$ is $V((r,X)\preccurlyeq(1,X))$, meaning $\lambda_{X}(2,r)\leq 1-r$. The slope zero region in $\lambda_{X}(2,r)$ is therefore at height $d_P(\{1\},\{1,2\})$, while the second slope negative one region is the graph of $1-r$. (We do not actually have enough information to claim this much in this small part of the diagram, because there is one more map of rank less than two: $V((s,X)\preccurlyeq(r,X))$. However, if $s\geq0$, then its rank is two.)
    
    In general, the height of the slope zero region in $\lambda_{X}$ is $d_P(\{1\},\{1,2\})$ or $d_P(\{2\},\{1,2\})$, depending on whether $\mathrm{LPH}_j(r,X_1)\to\mathrm{LPH}_j(r,X)$ or the analogous map from $\mathrm{LPH}_j(r,X_2)$ is the rank less than $n$ (if both are, then we take the minimum distance). Each slope negative one section has an $r$-intercept; the larger $r$-intercept is the death time in $X$.
    \item There is an interval on which $\lambda_{X_1}$ (or $\lambda_{X_2}$, respectively) has slope negative one, then slope zero, then slope negative one. Such an interval arises when there is a cycle which dies earlier in $X$ than in $X_1$ (or $X_2$, respectively). See Figure \ref{fig:3pts} for an example.
    
    As in the case above, the height is $d_P(\{1\},\{1,2\})$ (or $d_P(\{2\},\{1,2\})$, respectively), and the larger $r$-intercept of the negative slope region is the death time of the feature in $X_1$ (or in $X_2$, respectively).  The start of the slope zero region is the death time in $X$.

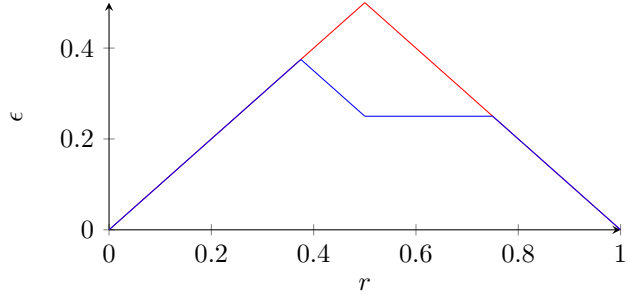
\begin{figure}[h!]
\centering
    \begin{tikzpicture}
\begin{axis}[
scale only axis = true,
width=0.45\textwidth,
height=0.2\textwidth,
    axis lines = left,
    xlabel = \(r\),
    ylabel = {\(\epsilon\)},
]
\addplot [
    domain=0:1/2, 
    samples=100, 
    color=red,
]
{x};
\addplot [
    domain=1/2:1, 
    samples=100, 
    color=red,
    ]
    {1-x};
\addplot [
    domain=0:3/8, 
    samples=100, 
    color=blue,
]
{x};
\addplot [
    domain=1/2:3/4, 
    samples=100, 
    color=blue,
    ]
    {1/4};
\addplot [
    domain=3/8:1/2, 
    samples=100, 
    color=blue,
    ]
    {3/4-x};
\addplot [
    domain=3/4:1, 
    samples=100, 
    color=blue,
    ]
    {1-x};
\end{axis}
\end{tikzpicture}
\caption{Let $X_1=\{0,2\}$ and $X_2=\{1\}$. In red we have plotted $\lambda_{X_1}^\R(2,r)$, the rank two single-parameter persistence landscape of $\mathrm{LPH}_0(X_1,r)$. In blue we have plotted $\left.\lambda_V(2,r)
\right|_{X_1}$, the rank two generalized persistence landscape of $\mathrm{LPH}_0(X,r)$ restricted to $X_1$, assuming $d_P(\{1\},\{1,2\})=1/4$ and using the logic outlined in Appendix \ref{app:calc}.}
\label{fig:3pts}
\end{figure}

    \item There is an interval on which $\lambda_{X}$ has slope one, then slope zero, then slope one. The underlying feature is a cycle that is born earlier in $X$ than in either $X_1$ or $X_2$. Again, the height is either $d_P(\{1\},\{1,2\})$ or $d_P(\{2\},\{1,2\})$. See Example~\ref{ex:synthetic} for many examples in $\mathrm{LPH}_1(X,r)$.
\end{enumerate}
In all three cases, we assume that the distances $d_P(\{1\},\{1,2\})$ and $d_P(\{2\},\{1,2\})$ are low enough for the slope zero regions to appear: see Lemma \ref{lem:dPrange} for more details.

Finally, note that it is not possible for $\lambda_{X_1}$ (or $\lambda_{X_2}$, respectively) to have slope one, then zero, then one, which would require a feature that is born earlier in $X_1$ (or $X_2$, respectively) than in $X$. Because $X_1\subseteq X$ (and $X_2\subseteq X$, respectively), every cycle in $X_1$ (respectively, $X_2$) is a cycle in $X$. There may not be a corresponding bar in the barcode of $X$, but only because that feature came to an early death, not because it was never born.
\end{remark}

Note that the flat regions discussed in Remark \ref{rmk:flats} all have height equal to either $d_P(\{1\},\{1,2\})$ or $d_P(\{2\},\{1,2\})$. Therefore, we only see flat regions in $\lambda_V$ when $d_P$ is small enough. This is summarized in the following simple lemma, which is stated only for the case $k=2$ for simplicity.

\begin{lemma}\label{lem:dPrange} Let $\lambda_S^\R$ denote the single-parameter persistence landscape of $H_j(\VR_r(S))$, and let $\lambda_S$ denote the restriction of the generalized persistence landscape of $V(r,S)=\mathrm{LPH}_j(r,S))$ to $S$.
\begin{enumerate}[label=(\roman*)]
    \item For $i=1$ or $2$, if $d_P(\{i\},\{1,2\})\geq||\lambda_{X_i}^\R(n,\cdot)||_\infty$, then $\lambda_{X_i}(n,(\cdot,X_i))=\lambda_{X_i}^\R(n,\cdot)$.
    \item If both $d_P(\{1\},\{1,2\})\geq||\lambda_{X}^\R(n,\cdot)||_\infty$ and $d_P(\{2\},\{1,2\})\geq||\lambda_{X}^\R(n,\cdot)||_\infty$, then $\lambda_{X}(n,(\cdot,X))=\lambda_{X}^\R(n,\cdot)$.
\end{enumerate}
    We may replace $||\lambda_S^\R(n,\cdot)||_\infty$ in all hypotheses above by $\frac{1}{2}\mathrm{diam}(S)$.
\end{lemma}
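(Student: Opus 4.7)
My plan is to prove each statement directly from Definition \ref{def:2}, by showing that under the stated hypotheses the relevant $\epsilon$-ball in $\R \times P_2$ is confined to a single ``slab'' $\R \times \{X_i\}$ or $\R \times \{X\}$, so the generalized landscape reduces to the single-parameter one. The sum quasimetric makes the confinement argument elementary because the $P$-coordinate contributes additively to distances.

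For Part (i), the upper bound $\lambda_{X_i}(n,(\cdot,X_i)) \leq \lambda_{X_i}^\R(n,\cdot)$ is immediate from Proposition \ref{prop:supinf}: the horizontal chain $\R \times \{X_i\}$ is a path through $(r,X_i)$, its pullback of $V$ is precisely $H_j(\VR_\cdot(X_i))$, and this path's single-parameter landscape at $r$ equals $\lambda_{X_i}^\R(n,r)$. For the matching lower bound, fix $\epsilon < \lambda_{X_i}^\R(n,r)$, so that $\epsilon \leq \|\lambda_{X_i}^\R(n,\cdot)\|_\infty \leq d_P(\{i\},\{1,2\})$. For any $\ba = (s,p_a) \preccurlyeq (r,X_i) \preccurlyeq (t,p_b) = \bb$ with both sum-quasimetric distances at most $\epsilon$, the $d_P$-contribution to $d(\bb,(r,X_i))=(t-r)+d_P(\{i\},p_b)$ forces $p_b \in \{X_i,X\}$ with $d_P(\{i\},p_b) \leq \epsilon < d_P(\{i\},\{1,2\})$, hence $p_b = X_i$; analogously $p_a = X_i$ (under the paper's implicit convention, visible in Example \ref{ex:byhand1}, that the empty subset is excluded from $P_k$). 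Then $\beta_V^{\ba,\bb}$ equals the rank of $H_j(\VR_s(X_i)) \to H_j(\VR_t(X_i))$, and since this map factors $H_j(\VR_{r-\epsilon}(X_i)) \to H_j(\VR_{r+\epsilon}(X_i))$, whose rank is $\geq n$ by the choice of $\epsilon$, we conclude $\beta_V^{\ba,\bb} \geq n$. Taking the supremum over $\epsilon$ yields $\lambda_{X_i}(n,(r,X_i)) \geq \lambda_{X_i}^\R(n,r)$.

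Part (ii) follows the identical template: now $\bb \succcurlyeq (r,X)$ automatically has $p_b = X$ since $X$ is maximal in $P_2$, and the joint hypothesis $d_P(\{i\},\{1,2\}) \geq \|\lambda_X^\R(n,\cdot)\|_\infty$ for both $i=1,2$ rules out $p_a \in \{X_1,X_2\}$, reducing to the same single-parameter factorization argument. For the final ``may replace'' clause, every bar $(b,d)$ in the barcode of $H_j(\VR_\cdot(S))$ has $b,d \in [0,\mathrm{diam}(S)]$ (using the paper's convention that $\mathrm{LPH}_0(r,S) = 0$ for $r \geq \mathrm{diam}(S)$ in degree zero, together with the fact that $\VR_r(S)$ is a simplex, hence contractible, for $r \geq \mathrm{diam}(S)$ in positive degree), so the classical triangular shape of the single-parameter landscape gives $\lambda_S^\R(n,r) \leq \min\{r-b, d-r\} \leq \tfrac{1}{2}(d-b) \leq \tfrac{1}{2}\mathrm{diam}(S)$ pointwise, and thus in the supremum norm.

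The only genuine bookkeeping obstacle is the empty subset $\emptyset \in P_k$: if retained, $V(\cdot,\emptyset) = 0$ would allow $\ba = (s,\emptyset)$ in the $\epsilon$-ball to kill the rank and potentially collapse the generalized landscape to zero. Under the convention used in the paper's calculations this element is excluded and the argument above goes through verbatim; otherwise one simply adds the harmless hypothesis $d_P(\emptyset,\{i\}) \geq \|\lambda_{X_i}^\R(n,\cdot)\|_\infty$, which is automatic in the point-cloud weighting of Example \ref{ex:gud} since the edge $\emptyset \hookrightarrow \{i\}$ has weight $\mathrm{diam}(X_i)$.
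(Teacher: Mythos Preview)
Your proof is correct and follows essentially the same idea as the paper's: under the hypothesis, the $\epsilon$-ball about $(r,X_i)$ cannot reach the $\{1,2\}$-level of $P_2$, so only the horizontal slab $\R\times\{X_i\}$ contributes (the paper phrases both inequalities via Proposition~\ref{prop:supinf} and a case analysis on paths, whereas you argue the lower bound directly from Definition~\ref{def:2}, which is slightly more direct). Your explicit discussion of the $\emptyset$ node is more careful than the paper's proof, which tacitly ignores paths through $\emptyset$ even though $\emptyset$ is formally retained in $P_k$.
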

\begin{proof}
Recall that by Proposition~\ref{prop:supinf},
\[
\lambda_S(n,(r,S))=\inf\{\lambda_f(n,(r,S)) \, | \, f \in \mathcal{F}_{(r,S)} \},
\]
and note that if $f_S$ is the path $\R\times S$, then
\[
\lambda_{f_S}(n,(r,S))=\lambda_S^\R(n,r).
\]
To prove \textit{(i)}, let us compute $\lambda_{X_i}(n,(r,X_i))$. Every other path $f$ through $(r,\{i\})$ has at least one edge of the form $\{r'\}\times\{\{i\}\to \{1,2\}\}$. Assume $r<r'$ (the case of $r'<r$ is analogous). If $r+\lambda_{X_i}^\R(n,(r,X_i))<r'$, then $\lambda_f(n(r,X_i))=\lambda_{f_{X_i}}(n,(r,X_i))$. Otherwise, if
\[
\beta_V^{(r,\{i\}),(r',\{1,2\})}>n,
\]
then $d((r,\{i\}),(r',\{1,2\}))$ does not contribute to the supremum. Finally, if
\[
\beta_V^{(r,\{i\}),(r',\{1,2\})}<n,
\]
then $d((r,\{i\}),(r',\{1,2\}))$ does contribute to the supremum, but 
    \begin{equation}\label{eqn:dPd}
    \lambda_{f_{X_i}}(n,(r,X_i))\leq||\lambda_{X_i}^\R(n,\cdot)||_\infty\leq d_P(\{i\},\{1,2\})\leq d((r,\{i\}),(r',\{1,2\})),
    \end{equation}
    so we still have
    \[
    \lambda_{X_i}(n,(r,X_i))=\lambda_{f_{X_i}}(n,(r,X_i))=\lambda_{X_i}^\R(n,r).
    \]
    The last inequality in (\ref{eqn:dPd}) relies on the fact that we are using the sum quasimetric. 
    The proof of \textit{(ii)} is similar.

    To prove the final claim, note that we have
    \[
    \frac{1}{2}\mathrm{diam}(S)\geq||\lambda_S^\R(n,\cdot)||_\infty
    \]
    because if $\epsilon>\frac{1}{2}\mathrm{diam}(S)$ then either $V((r-\epsilon,S))$ or $V((r+\epsilon,S))$ is the zero vector space.\footnote{In the case $j=0, n=1, r\geq\frac{1}{2}\mathrm{diam}(S)$ this is a consequence of our conventions. See the discussion in Remark \ref{ex:mixupL}.}
\end{proof}

\subsubsection{Synthetic Data}\label{ss:syntheticcode}

\begin{figure}
    \centering
    \includegraphics[width=0.3\linewidth]{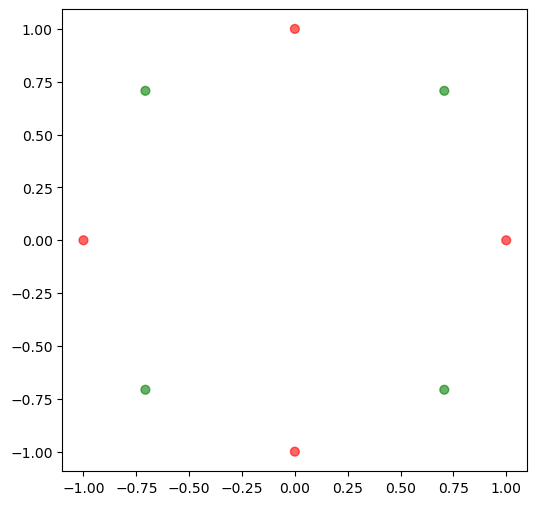}
    \caption{The 2-labeled metric space described in  Example \ref{ex:synthetic}.}
    \label{fig:4ptscirc}
\end{figure}

In practice, the relevant lower bound for $d_P$ at which we find $\lambda_S=\lambda_S^\R$ seems to be the Hausdorff distance between $X_1$ and $X_2$. Recall that for subsets $X_1$ and $X_2$ of the same metric space,
\[
\mathrm{diam}(X)\geq \mathsf{H}(X_1,X_2),
\]
where $\mathsf{H}$ is the Hausdorff distance
\[
\mathsf{H}(X_1,X_2)\coloneqq\max\left\{\sup_{a\in X_1}\inf_{b\in X_2}d(a,b),\sup_{b\in X_2}\inf_{a\in X_1}d(a,b)\right\}.
\]
Heuristically, the Hausdorff distance is the radius at which all balls centered at points in $X_1$ intersect $X_2$ and vice versa, which is why we can expect it to approximate the high end of the range of distances at which births and deaths in $X_1, X_2$, and $X$ affect each other. We illustrate this in Example \ref{ex:synthetic}.

\begin{example}\label{ex:synthetic}  

Let $X_1$ and $X_2$ be the green and red classes depicted in Figure \ref{fig:4ptscirc}, consisting of 4 points each, equidistributed around a circle of radius 1, rotated away from each other by $\pi/4$.

The Hausdorff distance from each class to the union is $H\coloneqq\sqrt{2-\sqrt{2}}\approx 1.53073$. Generalized landscapes were computed with the weights on the edges $\text{red}\preccurlyeq \text{(red}\cup \text{green)}$, $\text{green}\preccurlyeq \text{(red}\cup \text{green)}$ increasing from $0.2H$ to $0.9H$ in increments of $0.1H$. Since the situation for both classes in symmetric, we are displaying only the landscape for the red class in Figure \ref{fig:incrH}, with weights increasing from left to right.

\begin{figure}
\includegraphics[width=0.12\textwidth]{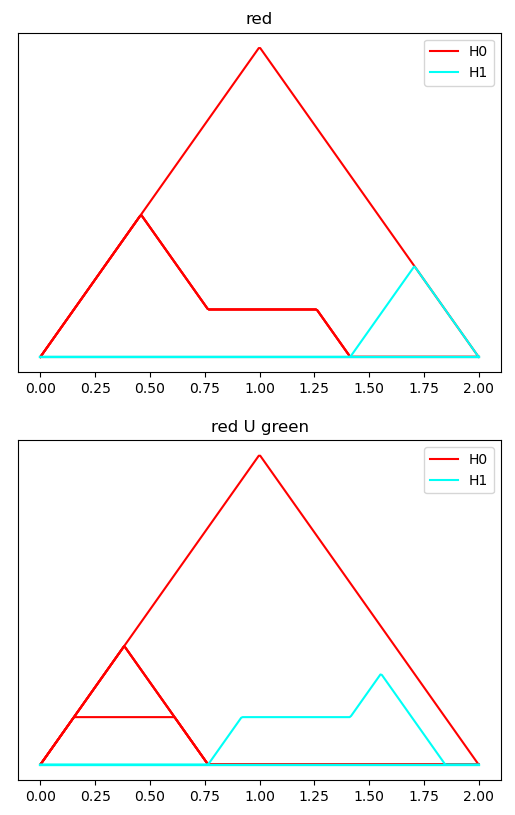}
\includegraphics[width=0.12\textwidth]{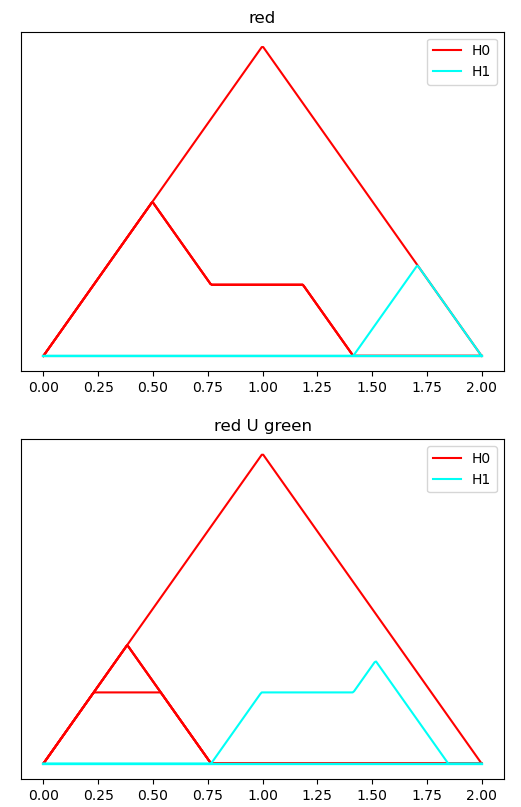}
\includegraphics[width=0.12\textwidth]{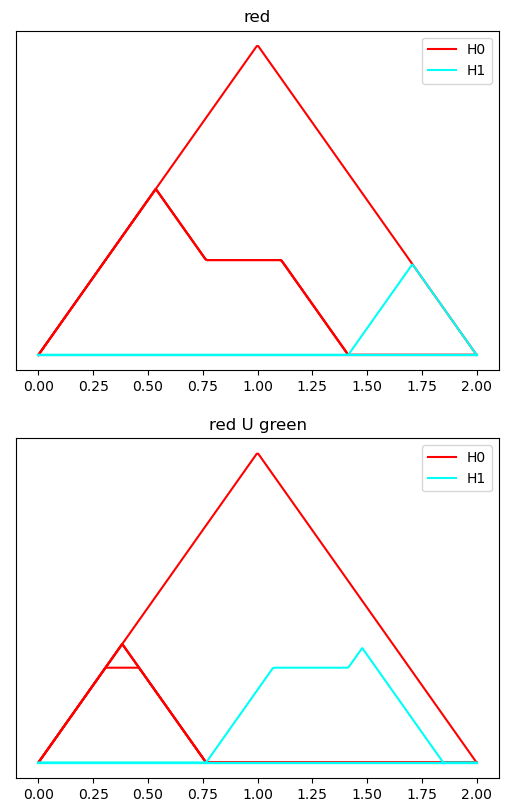}
\includegraphics[width=0.12\textwidth]{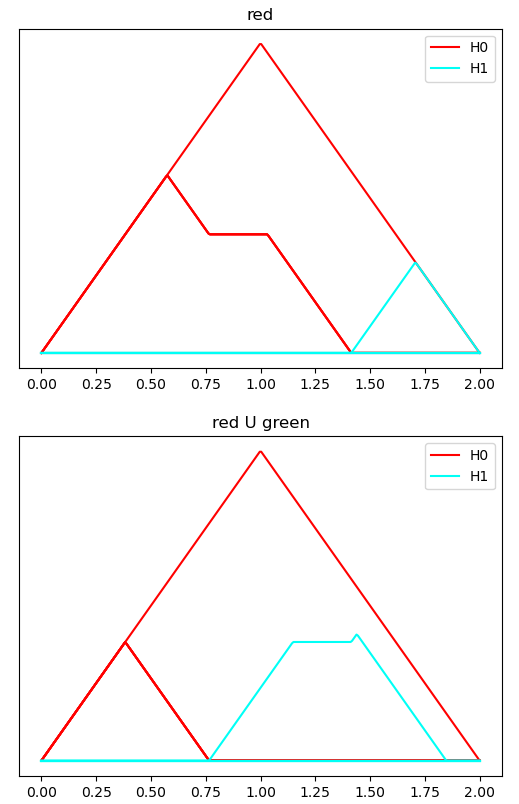}
\includegraphics[width=0.12\textwidth]{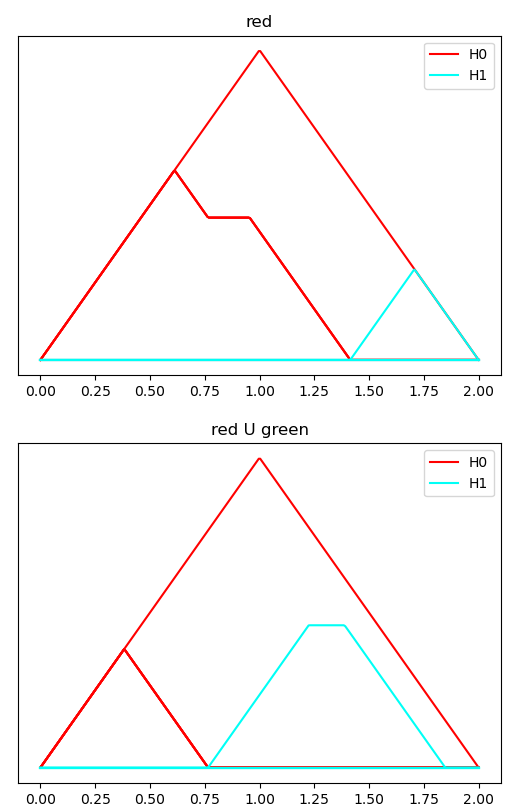}
\includegraphics[width=0.12\textwidth]{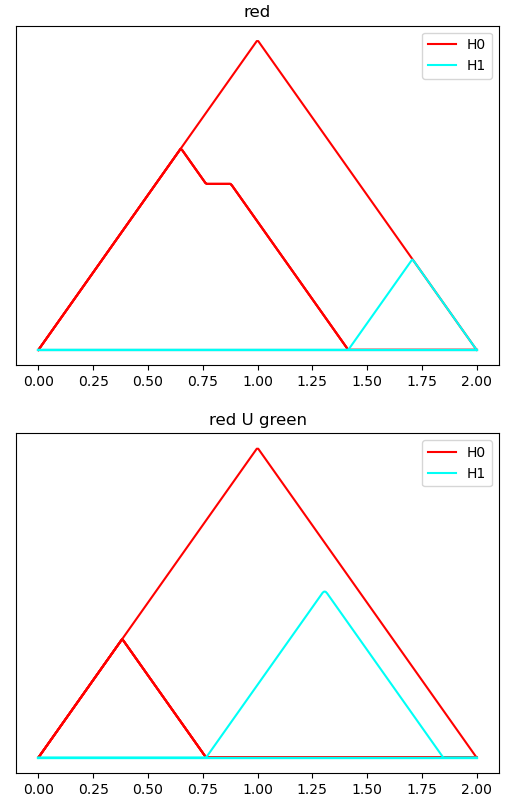}
\includegraphics[width=0.12\textwidth]{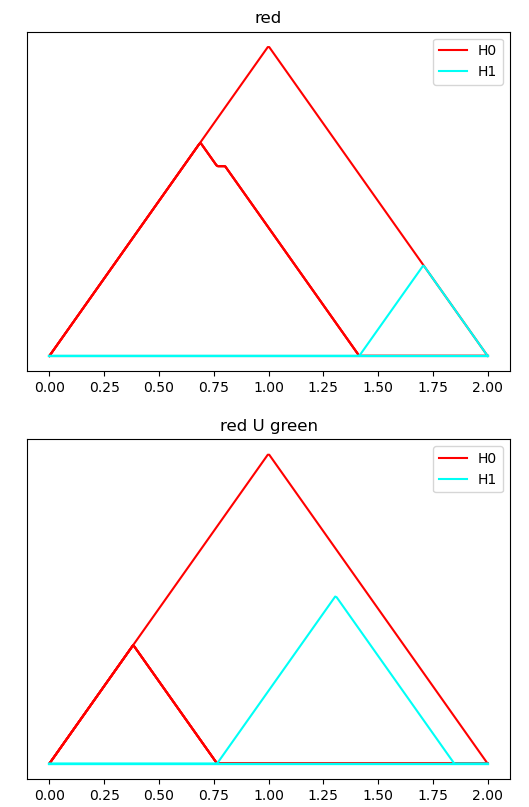}
\includegraphics[width=0.12\textwidth]{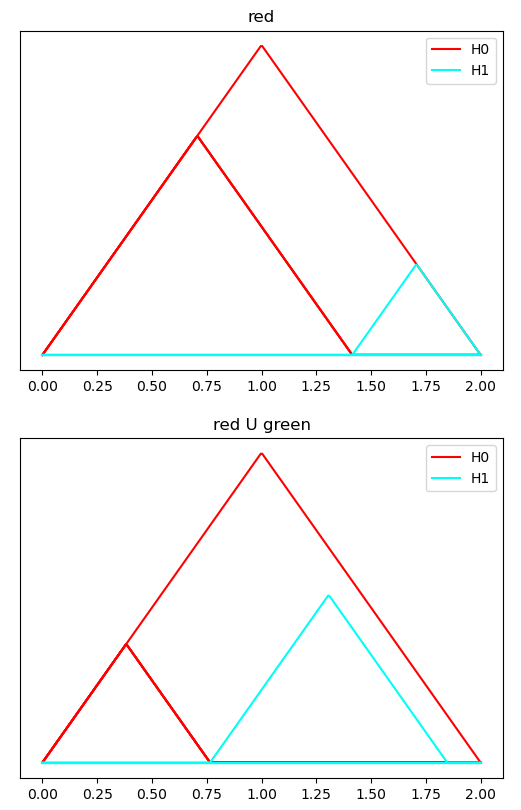}
\caption{Generalized landscapes for $\mathrm{LPH}_0$ (red) and $\mathrm{LPH}_1$ (cyan) of the red class and union of classes from Figure \ref{fig:4ptscirc}, with $d_{P_2}$ increasing from left to right by increasing all edge weights from one-fifth to nine-tenths of the Hausdorff distance between the red and green classes.}
\label{fig:incrH}
\end{figure}

We note that as the distance between each class and the union increases, the flat regions in both $\mathrm{LPH}_0$ and $\mathrm{LPH}_1$ appear at increasing height, until, for  sufficiently large distance between classes and the union, we see the landscape function only of the class (union, respectively) point cloud itself. This observation is in line with Lemma~\ref{lem:dPrange}.

\end{example}

Next we complicate the correspondence between landscape slopes and features in the labeled persistent homology of $X$ by adding noise to our point clouds.

\begin{example}\label{ex:noise}

In this example, we compare the effects of noise on the landscape function. In the first two-class point cloud, depicted in Figure~\ref{fig:noise}(a), the classes (red and green) have 15 points each and the points are equidistributed around a circle of radius 1, rotated away from each other by $\pi/15$. The second two-class point cloud, in Figure~\ref{fig:noise}(b), has a similar set up, but one of the classes (green) has noise added to it.

\begin{figure}
    \centering
\begin{subfigure}[b]{0.3\textwidth}
    \centering
    \includegraphics[width=\textwidth]{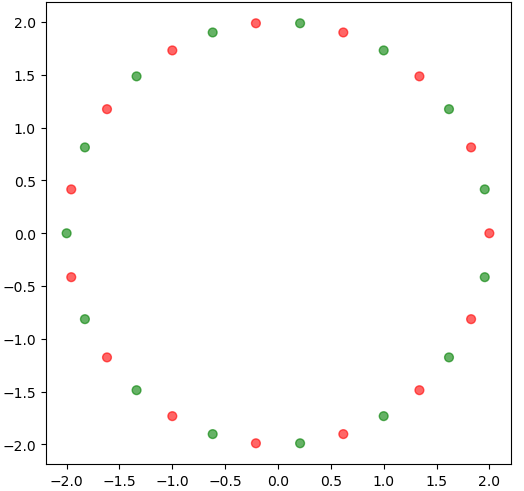}
    \caption{}
    \label{fig:noisea}
\end{subfigure}
\hspace{1cm}
\begin{subfigure}[b]{0.3\textwidth}
\centering
\includegraphics[width=\textwidth]{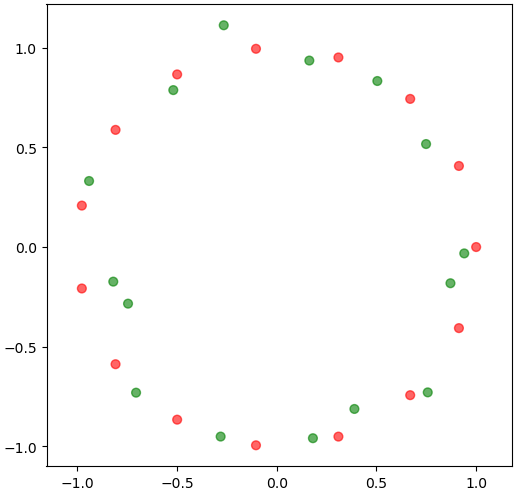}
\caption{}
\label{fig:noiseb}
\end{subfigure}
    \caption{The clean (in (a)) and noisy (in (b)) 2-labeled metric spaces described in Example \ref{ex:noise}.}
    \label{fig:noise}
\end{figure}

In Figure \ref{fig:noiseland} below we display the generalized landscapes of these point clouds. The weights on the edges from each individual class to the union in $P_2$ are set to be half of the Hausdorff distance from the individual class' point cloud to the union point cloud.

\begin{figure}[htp]
    \centering
    \begin{subfigure}[b]{0.3\textwidth}
    \centering
        \includegraphics[width=\textwidth]{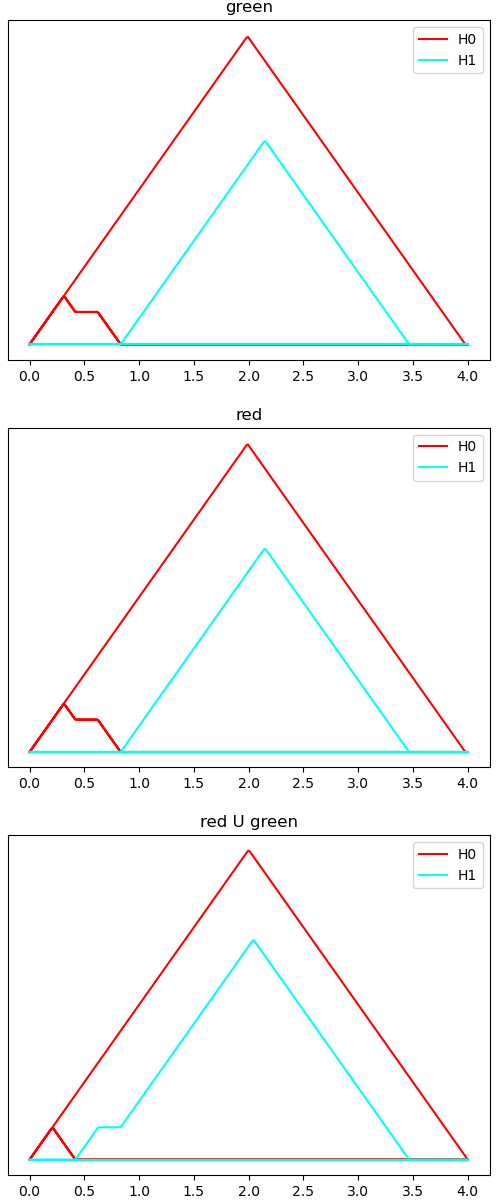}
        \caption{}
        \label{fig:noiselanda}
    \end{subfigure}
\hspace{1cm}
\begin{subfigure}[b]{0.3\textwidth}
\centering
\includegraphics[width=\textwidth]{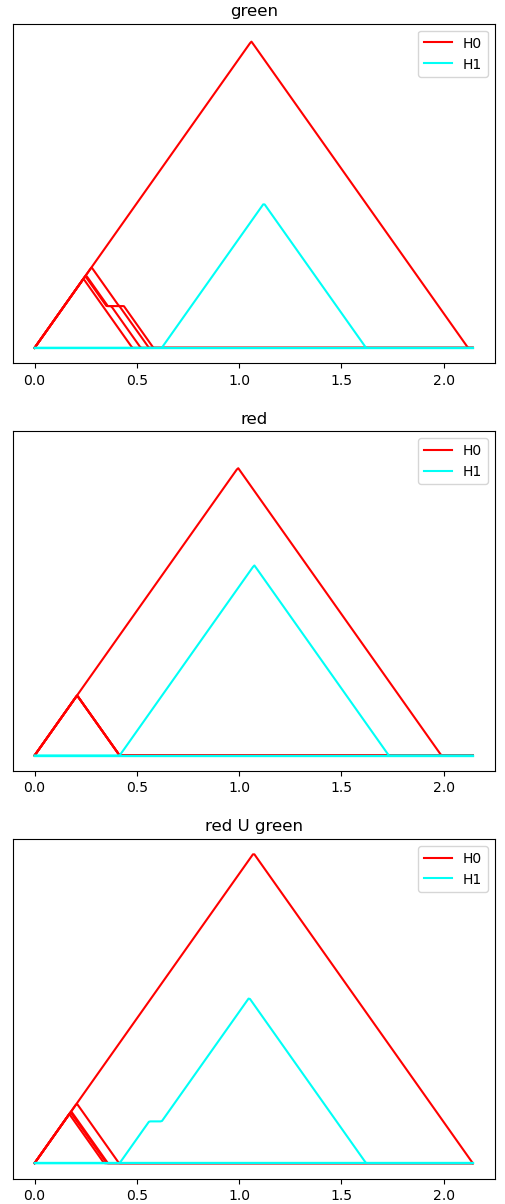}
\caption{}
\label{fig:noiselandb}
\end{subfigure}
\caption{Generalized landscapes for $\mathrm{LPH}_0$ (red) and $\mathrm{LPH}_1$ (cyan) of the green class, red class, and union of classes from Figure \ref{fig:noise} (again, the noisy example is on the right). The weight on an edge in $P_2$ equals half the Hausdorff distance between the sets corresponding to the endpoints of the edge. Note that when noise is added, the generalized landscape appears to indicate more $\mathrm{LPH}_0$ components; this is only because without noise, these components all have equal generalized landscape (thus their plots overlap in the output of our code) due to the symmetry of the point cloud.
}
    \label{fig:noiseland}
\end{figure}
In the case without noise (Figure \ref{fig:noiselanda}), the prominent cycle in $\mathrm{LPH}_1$ of the each individual class is born later than in the union of the classes (the birth times can be read off at the $r$-intercepts of the large cyan triangle indicating this cycle in each graph), leading to the flat region in the landscape of the union. The later birth happens because the union is more densely filled than either class individually. In the case with noise (Figure \ref{fig:noiselandb}), we can see that the flat region in $\mathrm{LPH}_1$ of the union arises because only the prominent cycle of the green class is born later. The flat region occurs at the height of the distance between the green circle and the union in the poset.

In the green class (in both cases), we observe an $\mathrm{LPH}_0$ cycle (a component) which dies earlier in the union than in the green class itself, leading to the flat region in the landscape of the class.
\end{example}

\subsubsection{Simple Real Example}\label{ss:realcode}

To test how these methods can be used on real-world data, we perform an ``entanglement'' experiment on MNIST data \cite{cohen2017emnist} similar to that performed in \cite{mixup}. The MNIST dataset found within the expanded EMNIST dataset \cite{cohen2017emnist} is a classic example dataset used throughout machine learning, and consists of $28 \times 28$ grayscale images of handwritten digits from 0 through 9. We choose 50 random digits from each class and perform a pairwise analysis on each of the 45 choices of pairs with the intent of seeing if we can detect which pairs of digits have their homology most affected by using class-aware landscapes, and if so, how.

In this experiment, we want to see how ``entangled'' each pair of digits are as point clouds in high-dimensional Euclidean space. Treating these grayscale images as flattened vectors means we lose some spatial adjacency information, but this flattening operation is common in machine learning, and is a reasonable first step in such an analysis, while more sophisticated distance measures could be used in follow-on studies if desired. 

The $28 \times 28$ images in the MNIST dataset are provided as flattened vectors in $\R^{784}$ so we begin by choosing a pair of digits. To obtain the class-aware landscape, we compute the $\R \times P_2$-landscape, then extract the union portion of that landscape for analysis. To compute the $\R \times P_2$-landscape, we choose to use inter-class distances equal to $0.1$ times the Hausdorff distance between the relevant point clouds. To obtain the class-naive landscape, we simply treat the point cloud of the samples for the two digits chosen as an unclassified point cloud and compute the standard landscape. To get a single numerical value that could represent the distance between these landscapes, we then compute the $L^2$ distance (or mean square error (MSE)) between the resulting landscapes (with the class-naive landscapes interpolated to be sampled at the same filtration values as the class-aware landscapes to ensure a fair comparison).

\begin{figure}[htp]
    \centering
    \includegraphics[width=0.5\linewidth]{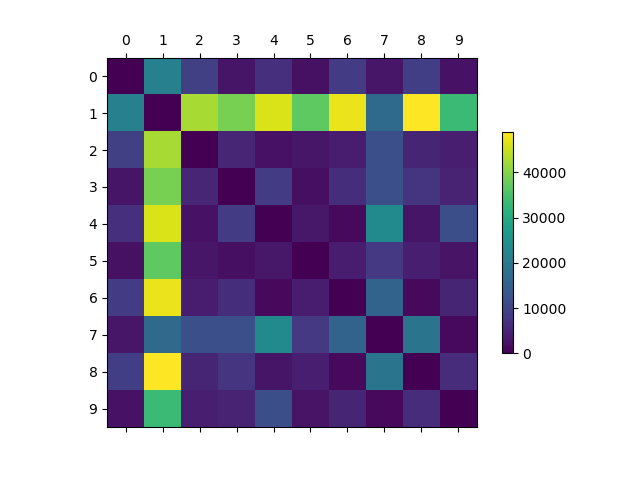}
    \caption{Pairwise MSE between the class-aware and class-naive landscapes for each pair of digits in the MNIST dataset.}
    \label{fig:mnist_pairwise_mse}
\end{figure}

The complete set of pairwise MSEs is shown in Figure \ref{fig:mnist_pairwise_mse}. One notable difference between our analysis and the similar analysis in \cite{mixup} is that because our construction is symmetric, the resulting MSE matrix is symmetric as well.

\begin{figure}[htp]
    \centering
    \includegraphics[width=0.8\linewidth]{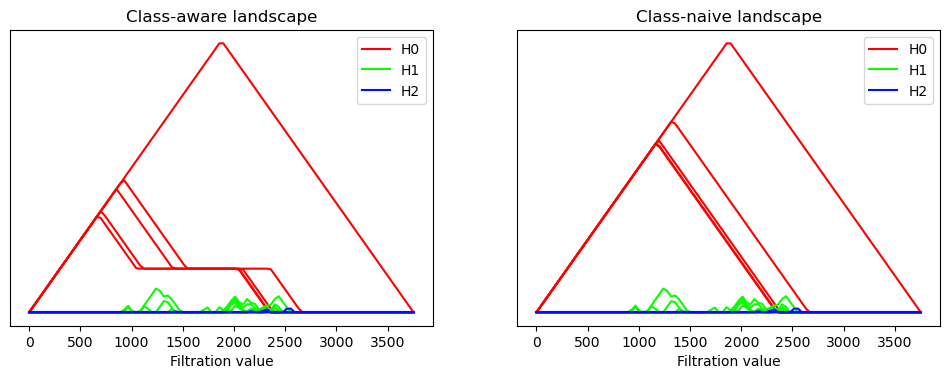}
    \caption{The class-aware and class-naive landscapes for the digits 1 versus 8. The MSE for this case is 49031.08.}
    \label{fig:mnist_1_vs_8}
\end{figure}

In analyzing Figure \ref{fig:mnist_pairwise_mse}, the most striking feature is that the MSE values for 1 vs.\ every other digit is much higher than the other pairwise differences, which is consistent between multiple runs of this analysis and multiple random samples of digits. To see why this might be, we can examine the relevant landscapes for the most extreme cases. In Figure \ref{fig:mnist_1_vs_8} we see the class-aware and class-naive landscapes for the 1 vs.\ 8 case, which has an MSE of $49031.08$, the highest amongst all comparisons. The most notable feature of these landscapes is that several of the $LPH_0$ features in the class-aware landscape descend to flat regions later in their lifetimes. Our interpretation of this is that in fact 1 and 8 are quite disentangled---there are multiple strong $LPH_0$ components that are present in the union, but those components are not present in one or the other of the single classes, so they die off at the fixed inter-class distance, which we would not expect to be true if the classes were well-mixed. The landscapes for other pairs involving the 1 digit exhibit similar (although slightly less extreme) behavior.

\begin{figure}[htp]
    \centering
    \includegraphics[width=0.8\linewidth]{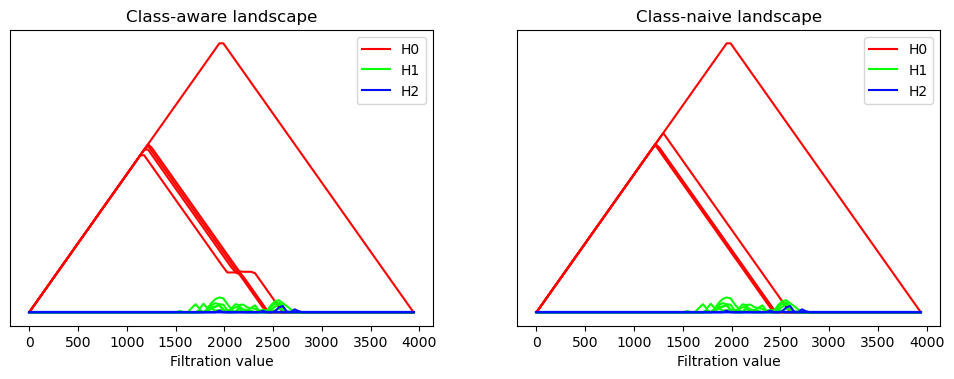}
    \caption{The class-aware and class-naive landscapes for the digits 4 vs. 6. The MSE for this case is 1182.22.}
    \label{fig:mnist_4_vs_6}
\end{figure}

To see this contrast, we also show the class-aware and class-naive landscapes for the $4$ vs $6$ case, which has an MSE of 1182.22, the lowest amongst all comparisons. In this case, while the class-aware landscape has a small amount of the same behavior (with a flat portion on the descending slope of the main $LPH_0$ components), it is far less extreme. This suggests that the homology for the digits separately is fairly similar to the homology for the union and that the point clouds for these classes are intermingled.

While this degree of interpretation is promising, the current computational scheme for $\R \times P_k$-landscapes leaves much to be desired in terms of speed. While it is exact at each point in the discretization chosen, the number of separate 1-dimensional landscape computations needed grows rapidly as the number of classes increases (see Subsection \ref{ss:algorithms} for more details) and the computation for each path increases in complexity as the number of points increases. While the latter problem can be addressed to some extent via algorithmic and coding improvements, the number of paths needed is a combinatorial requirement, and is why we only perform pairwise comparisons in these experiments on MNIST data. We view the exploration of finding approximating sets of paths as a promising avenue for future work.

\section{Discussion}

We view the main contributions of this paper to be  theoretical; namely, the formalization of the notion of labeled datasets, and the introduction of various metrics for comparing them, based on Gromov-Hausdorff distance and persistent homology. Our stability results relate these ideas and give a rigorous justification for the usage of persistent homology techniques in this setting. We also give a more tractably computable approach to comparing labeled datasets via our theoretically-grounded landscape invariants. However, as was observed in \S\ref{ss:realcode}, scalability of the present methods to larger datasets poses a serious issue, so that our current numerical pipeline should be viewed as an important first step toward computation in this setting. Future work will be especially focused on methodological and algorithmic improvements for overcoming this scalability issue. This will enable more serious applications of our framework to real data; in particular, an initial goal of this project (which has not yet been achieved) was to use these ideas to explore the topology of latent space embeddings, inspired by the recent paper~\cite{wayland2024mapping}. We are also interested in extending the Gromov-Hausdorff constructions of this paper to the measured setting, yielding a notion of Gromov-Wasserstein distances between labeled metric measure spaces.

\section*{Data Availability Statement}
The MNIST data used in \S\ref{ss:realcode} is available from the National Institute of Standards and Technology at \url{https://www.nist.gov/itl/products-and-services/emnist-dataset}. The code used to perform all computations, including example scripts, is available at \url{https://github.com/zyjux/classwise_tda}.

\section*{Acknowledgements}

This work originated at the American Mathematical Society (AMS) Mathematical Research Communities (MRC) workshop on \textit{Mathematics of Adversarial, Interpretable, and Explainable AI}, held in June 2024. We thank the AMS, as well as the workshop organizers and staff, for providing a stimulating and productive research environment. This material is based upon work supported by the National Science Foundation under Grant Number DMS 1916439. SL was partially supported by NSF grant DMS 2410140. TN was partially supported by NSF grants DMS 2324962 and CIF 2526630.  LV's work on this project was supported by two NSF grants: AI Institute grant RISE-2019758 (AI2ES) and CAIG grant RISE-2425923.
EL was partially supported by the Deutsche Forschungsgemeinschaft (DFG, German Research
Foundation) under Germany's Excellence Strategy – The Berlin Mathematics
Research Center MATH+ (EXC-2046/1, project ID: 390685689).

\bibliography{refs.bib}{}
\bibliographystyle{plain}

\appendix

\section{Worked Examples}\label{app:calc}

We present three fully computed examples with $k=2$. The third consists of the details needed to support Example \ref{ex:byhand1}. In each, we compute the entire ($\R\times P_2$)-module. Therefore, in order to depict it on the page, we restrict from $P_2$ to $P_2\setminus\{\emptyset\}=X_1\to X\leftarrow X_2$, and we lose no information because $\lambda(n,(r,\emptyset))\equiv0$.

For an interval (open, closed, or half-open) $I\subset\R$ that may only have critical filtration values (values at which Betti numbers change) at its endpoints, we use $\mathrm{LPH}_j(I,S)$ to indicate any of the isomorphic homology groups $\mathrm{LPH}_j(r,S)$ for a parameter $r\in I$.

Each of our examples follows the same outline:
\begin{enumerate}
    \item For $S=X_1, X_2$, and $X$, calculate the vector spaces $\mathrm{LPH}_j(I,S)$ for all half-open intervals $I$ starting and ending at the union of critical filtration values for $\VR(X_1), \VR(X_2)$, and $\VR(X)$ (and thus containing no critical filtration values for any of these three $\VR$-complexes in their interiors), as well as $I=(-\infty,0)$ and $I=[r_{max},\infty)$, where $r_{max}$ is the largest critical filtration value across all three complexes.
    \item Calculate all maps $\mathrm{LPH}_j(r,S)\to \mathrm{LPH}_j(r',S')$ induced by the inclusions of simplicial complexes $\VR_r(S)\subseteq\VR_{r'}(S')$ arising when $r\leq r'$ and $S\subseteq S'$.
    \item Arrange these vector spaces and maps into a poset indicating the image of the $\R\times P_2$-module $(r,S)\mapsto \mathrm{LPH}_j(r,\cdot)$. Here is where our concise notation $\mathrm{LPH}_j(I,S)$ (rather than $\mathrm{LPH}_j(r,S)$ for each $r\in I$) becomes useful.
    \item Calculate $\lambda_V(n,(r,S))$ for $S=X_1, X_2$, and $X$ in turn. Given $n$ and $S$, we do this by splitting our analysis based on the interval $I\ni r$, and for each $r$, we consider the minimal\footnote{Here, ``minimal" means that once we have analyzed a given map, we are free to ignore any maps factoring through it.} set of maps factoring through $\mathrm{LPH}_j(r,S)$ of rank less than $n$. Each such map induces a lower bound on the value of $\epsilon$ needed to contain both endpoints of (the preimage under the functor from $\R\times P$ to vector spaces of) that map within the $\epsilon$-ball in $\R\times P$ around $(r,S)$. We record these upper bounds, ignoring any repeats; the minimum upper bound is the value of $\lambda_V(n,(r,S))$.
\end{enumerate}

In the first example, we repeat and re-explain these steps, and fix notation used throughout; the logic of the second example is almost identical to the first; finally, the logic of the third example, which requires many more calculations, is summarized tersely.

\begin{example}[Two one-point classes; $H_0$]
Let $X_1=\{(0,0)\}$ and $X_2=\{(0,1)\}$. Then
    \[
    \mathrm{LPH}_0((-\infty,0),X_1)=0, \quad \mathrm{LPH}_0([0,\infty),X_1)=\mathbb{F},
    \]
    and the homology of the Vietoris-Rips complex of $X_2$ is the same. We also have
    \[
    \mathrm{LPH}_0((-\infty,0),X)=0, \quad \mathrm{LPH}_0([0,1),X)=\mathbb{F}^2, \quad \mathrm{LPH}_0([1,\infty),X)=\mathbb{F}.
    \]
    We now arrange these homology groups into a poset diagram indicating the image of the $\R\times P_2$-module $(r,S)\mapsto \mathrm{LPH}_0(r,S)$ and $(r,S)\preccurlyeq(r',S')$ to the inclusion-induced map on homology. For further conciseness, we use $V^I_S$ to denote any of the isomorphic homology groups $\mathrm{LPH}_0(r,S)$ for $r\in S$. We use $V^r_S$ to specify a single parameter $r$. We will also use the convention that, when $I\subseteq[d,\infty)$ for the diameter $d$ of $S$, we set $V^I_S=0$ (so that the landscape values for $r\geq d$ will be zero).
    \[
    \begin{tikzcd}
        V^{(-\infty,0)}_{X_1}=0 \arrow[r] \arrow[d] & V^{[0,1)}_{X_1}=0 \arrow[r, "id"] \arrow[d, "\begin{pmatrix}1\\0\end{pmatrix}"] & V^{[1,\infty)}_{X_1}=0 \arrow[d]
        \\V^{(-\infty,0)}_{X}=0 \arrow[r] & V^{[0,1)}_{X}=\mathbb{F}^2 \arrow[r, "(1 \;\; 1)"] & V^{[1,\infty)}_{X}=0
        \\V^{(-\infty,0)}_{X_2}=0 \arrow[r] \arrow[u] & V^{[0,1)}_{X_2}=0 \arrow[r, "id"] \arrow[u, "\begin{pmatrix}0\\1\end{pmatrix}"] & V^{[1,\infty)}_{X_2}=0 \arrow[u]
    \end{tikzcd}
    \]
    Recall that in the sum extended quasimetric, we have $d((r,S)\to (r',S))=r'-r$. Set $d_P((r,{1})\to(r,{1,2}))=:d_1$ and $d_P((r,{2})\to(r,{1,2}))=:d_2$. The length of a path is the sum of its edge weights, because when $k=2$ each path can only contain one edge in $P_2$, so the difference between the geodesic and ultrametric distances is not significant. We now calculate the $P$-landscape in three parts. To save on notation, we denote by $\lambda_A(n,r)$ the the $\R\times P_2$-landscape of $\mathrm{LPH}_0(r,\cdot)$ restricted to $\R\times\{X_1\}$, that is, $\lambda_{X_1}(n,r)\coloneqq\lambda_V(n,(r,X_1))$, and similarly for $\lambda_{X_2}$ and $\lambda_{X}$. First note that $\lambda_{X_1}(n,r)=\lambda_{X_2}(n,r)\equiv0$, because all vector spaces are zero.
        
        To calculate $\lambda_{X}$, we only need to consider $r\in[0,1)$. Qualitatively, there are six paths through $(r,\{1,2\})$, where in the list below, for $r\neq0$, $0\leq r'<r$, each corresponding under $V$ to one of the compositions of maps listed below. We provide the bounds on $\epsilon$ determined by the maps corresponding to the shortest possible paths when that map appears for the first time.
        \begin{itemize}
            \item $V^{(-\infty,0)}_{X_1}\to V^r_{X_1}\to V^r_{X}\to V^{[1,\infty)}_{X}$: the map $V^r_{X_1}\to V^r_{X}$ is rank zero, and is included for $\epsilon\geq d_1$; the map $V^r_{X}\to V^{[1,\infty)}_{X}$ is rank zero, and is included for $\epsilon\geq 1-r$.
            \item $V^{(-\infty,0)}_{X_1}\to V^{r'}_{X_1}\to V^{r'}_{X}\to V^r_{X}\to V^{[1,\infty)}_{X}$: the map $V^{r'}_{X_1}\to V^{r'}_{X}$ is rank zero, and included for $\epsilon\geq r-r'+d_1$.\footnote{From now on we exclude paths such as $(r,{1})\preccurlyeq(r',\{1,2\})\preccurlyeq(r,\{1,2\})$ when $V^{r'}_{X_1}\to V^{r'}_{X}$ is the same map between the same modules as $V^r_{X_1}\to V^r_{X}$, as the rank $\leq n$ part of the map always occurs farther away from $(r,\{1,2\})$ and so lead to weaker, duplicate bounds on $\epsilon$.}
         \item $V^{(-\infty,0)}_{X_1}\to V^{(-\infty,0)}_{X}\to V^r_{X}\to V^{[1,\infty)}_{X}$: the map $V^{(-\infty,0)}_{X}\to V^r_{X}$ is rank zero, and is included for $\epsilon\geq r$.
        \end{itemize}
        (We list only those starting at $V_{X_1}^{(-\infty,0)}$; those starting at $V_{X_2}^{(-\infty,0)}$ are analogous.
        
        Therefore if $r\in[0,1)$,
        \[
        \lambda_{X}(1,r)=\lambda_{X}(2,r)=\min\{d_1,d_2,1-r,r-r'+d_1,r-r'+d_2,t\}=\min\{d_1,d_2,1-t,t\},
        \]
        because $r-r'\geq0$, therefore $r-r'+d_1\geq d_1$ (and similarly for $d_2$). 
        Otherwise, $\lambda_{X}(n,r)=0$.
    \end{example}

    \begin{example}[Two two-point classes; $H_1$] Let $X_1=\{(0,0),(1,0)\}$ and let $X_2=\{(0,1),(1,1)\}$. Then
    \[
    \mathrm{LPH}_1((-\infty,\infty),X_1)=\mathrm{LPH}_1((-\infty,\infty),X_2)=0,
    \]
    while
    \[
    \mathrm{LPH}_1((-\infty,1),X)=0, \quad \mathrm{LPH}_1([1,\sqrt{2}),X)=\mathbb{F}, \quad \mathrm{LPH}_1([\sqrt{2},\infty),X)=0.
    \]
    The image of the $\R\times P$-module is extremely simple:
     \[
    \begin{tikzcd}
        V^{(-\infty,1)}_{X_1}=0 \arrow[r] \arrow[d] & V^{[1,\sqrt{2})}_{X_1}=0 \arrow[r] \arrow[d] & V^{[\sqrt{2},\infty)}_{X_1}=0 \arrow[d]
        \\V^{(-\infty,1)}_{X}=0 \arrow[r] & V^{[1,\sqrt{2})}_{X}=\mathbb{F} \arrow[r] & V^{[\sqrt{2},\infty)}_{X}=0
        \\V^{(-\infty,1)}_{X_2}=0 \arrow[r] \arrow[u] & V^{[1,\sqrt{2})}_{X_2}=0 \arrow[r] \arrow[u] & V^{[\sqrt{2},\infty)}_{X_2}=0 \arrow[u]
    \end{tikzcd}
    \]
    The same argument as in the previous example shows that $\lambda_{X_1}(k,r)=\lambda_{X_2}(k,r)\equiv0$, and
    \[
    \lambda_{X}(1,r)=\min\{d_1,d_2,\sqrt{2}-r,r-1\}
    \]
    for $r\in[1,\sqrt{2})$ and otherwise $\lambda_{X}(n,r)=0$.
    \end{example}
        
    \begin{example}[Two three-point classes; $H_0$]
    Let 
    \[
    X_1=\{(0,0),(0.4,0),(1,0)\} \quad \mbox{and} \quad X_2=\{(0,1),(0.4,1),(1,1)\}.
    \]
    We have
    \[
    \mathrm{LPH}_0((-\infty,0),X_1)=0, \quad \mathrm{LPH}_0([0,0.4),X_1)=\mathbb{F}^3, \quad \mathrm{LPH}_0([0.4,0.6),X_1)=\mathbb{F}^2, \quad \mathrm{LPH}_0([0.6,\infty),X_1)=\mathbb{F},
    \]
    and the homology of the Vietoris-Rips complex of $X_2$ is the same. We also have
    \begin{multline*}
        \mathrm{LPH}_0((-\infty,0),X)=0, \quad \mathrm{LPH}_0([0,0.4),X)=\mathbb{F}^6, \quad \mathrm{LPH}_0([0.4,0.6),X)=\mathbb{F}^4,
        \\ \mathrm{LPH}_0([0.6,1),X)=\mathbb{F}^2, \quad \mathrm{LPH}_0([1,\infty),X)=\mathbb{F}.
    \end{multline*}
    The poset diagram is below (with $V^{(-\infty,0)}_S$ and $V^{[d,\infty)}_S$ labels elided for space, where $d$ is the diameter of $S$, since both are zero).
    \[
    \begin{tikzcd}
        0 \arrow[r] \arrow[d] & V^{[0,0.4)}_{X_1}=\mathbb{F}^3 \arrow[r, "\mathbf{V}_{X_1}^{0.4}"] \arrow[d, "\begin{psmallmatrix}I_3\\0\end{psmallmatrix}"] & V^{[0.4,0.6)}_{X_1}=\mathbb{F}^2 \arrow[d, "\begin{psmallmatrix}I_2\\0\end{psmallmatrix}"] \arrow[r, "\mathbf{V}_{X_1}^{0.6}"] & V^{[0.6,1)}_{X_1}=\mathbb{F} \arrow[r] \arrow[d, "\begin{psmallmatrix}1\\0\end{psmallmatrix}"] & 0 \arrow[d]\arrow[r] & 0 \arrow[d]
        \\0 \arrow[r] & V^{[0,0.4)}_{X}=\mathbb{F}^6 \arrow[r, "\mathbf{V}_{X}^{0.4}"] & V^{[0.4,0.6)}_{X}=\mathbb{F}^4 \arrow[r, "\mathbf{V}_{X}^{0.6}"] & V^{[0.6,1)}_{X}=\mathbb{F}^2 \arrow[r, "\mathbf{V}_{X}^1"] & V^{[1,\sqrt{2})}_{X}=\mathbb{F} \arrow[r] & 0
        \\0 \arrow[r] \arrow[u] & V^{[0,0.4)}_{X_2}=\mathbb{F}^3 \arrow[r, "\mathbf{V}_{X_2}^{0.4}"] \arrow[u, "\begin{psmallmatrix}0\\I_3\end{psmallmatrix}"] & V^{[0.4,0.6)}_{X_2}=\mathbb{F}^2 \arrow[r, "\mathbf{V}_{X_2}^{0.6}"] \arrow[u, "\begin{psmallmatrix}0\\I_2\end{psmallmatrix}"] & V^{[0.6,1)}_{X_2}=\mathbb{F} \arrow[r] \arrow[u, "\begin{psmallmatrix}0\\1\end{psmallmatrix}"] & 0 \arrow[u]\arrow[r] & 0 \arrow[u]
    \end{tikzcd}
    \]
    Now set $\mathbf{V}^{\min I'}_S\coloneqq V((r,S)\preccurlyeq(r',S))$ for $r\in I, r'\in I'$ with $\sup I=\min I'$. Specifically, the maps are:
    \begin{align*}
        \mathbf{V}_{X_1}^{0.4}=\mathbf{V}_{X_2}^{0.4}&=\begin{pmatrix}1&1&0\\0&0&1\end{pmatrix}
        \\\mathbf{V}_{X_1}^{0.6}=\mathbf{V}_{X_2}^{0.6}&=\begin{pmatrix}1&1\end{pmatrix}
        \\\mathbf{V}_{X}^{0.4}&=\mathbf{V}_{X_1}^{0.4}\oplus \mathbf{V}_{X_2}^{0.4}
        \\\mathbf{V}_{X}^{0.6}&=\mathbf{V}_{X_1}^{0.6}\oplus \mathbf{V}_{X_2}^{0.6}
        \\\mathbf{V}_{X}^1&=\begin{pmatrix}1&1\end{pmatrix}.
    \end{align*}

    First we calculate $\lambda_{X_1}$; the calculation for $\lambda_{X_2}$ is analogous. For $\lambda_{X_1}(1,r)$, there are three intervals to consider. We list them along with the shortest maps through $(r,\{1\})$ for $r$ in that interval with rank zero (and providing a new bound).
    \begin{itemize}
        \item $r\in[0,0.4)$: $0\to V^r_{X_1}$ has rank zero, and is included if $\epsilon\geq r$; $V^r_{X_1}\to V^{r'}_{X}$ has rank zero for $r'\geq\sqrt{2}$, and is included if $\epsilon\geq d_1+\sqrt{2}-r$; $V^r_{X_1}\to 0$ has rank zero, and is included if $\epsilon\geq 1-r$.
        \item $r\in[0.4,0.6)$ and $r\in[0.6,1)$ provide no new bounds.
    \end{itemize}
    Therefore
    \[
    \lambda_{X_1}(1,r)=\lambda_{X_2}(1,r)=\min\{r,d_1+\sqrt{2}-r,1-r\}=\min\{r,1-r\}.
    \]

    For $\lambda_{X_1}(2,r)$, we have:
    \begin{itemize}
        \item $r\in[0,0.4)$: $0\to V^r_{X_1}$ has rank zero, and is included if $\epsilon\geq r$; $V^r_{X_1}\to V^{r'}_{X_1}$ has rank one for $r'\geq0.6$, and is included if $\epsilon\geq 0.6-r$; $V^r_{X_1}\to V^{r'}_{X}$ has rank one for $r'\in[0.6,1)$, and is included if $\epsilon\geq d_1+0.6-r$.
        \item $r\in[0.4,0.6)$: no new bounds are added.
        \item $r\in[0.6,1)$: here $\lambda_{X_1}(2,r)\equiv0$ as the rank of $V^r_{X_1}$ is one.
    \end{itemize}
    Therefore
    \[
    \lambda_{X_1}(2,r)=\lambda_{X_2}(2,r)=\min\{r,0.6-r,d_1+0.6-r\}=\min\{r,0.6-r\},
    \]
    By a similar calculation,
    \[
    \lambda_{X_1}(3,r)=\lambda_{X_2}(3,r)=\min\{r,0.4-r\},
    \]
    and all other $\lambda_{X_1}(n,r)$ and $\lambda_{X_2}(n,r)$ are zero.

    Next we turn to $\lambda_{X}$. First, as in the previous two examples, we have $\lambda_{X}(1,r)=\min\{d_1+r,d_2+r,\sqrt{2}-r,r\}=\min\{\sqrt{2}-r,r\}$. Next, we consider $\lambda_{X}(2,r)$. Again, we will consider only paths going through the $X_1$ module, and obtain analogous bounds on $\epsilon$ via the symmetric paths through the $X_2$ module.
    \begin{itemize}
        \item $r\in[0,0.4)$: $0\to V^r_{X_1}\to V^r_{X}$ has rank zero, and is included if $\epsilon\geq d_1+r$; $V^r_{X}\to V^{r'}_{X}$ has rank one or zero if $r'\geq 1$, and is included if $\epsilon\geq 1-r$; $0\to V^{[0,0.4)}_{X}$ has rank zero, and is included if $\epsilon\geq r$. There is also one more complicated map: $V^r_{X_1}\to V^r_{X}\to V^{r'}_{X}$ is rank one for $r'\geq0.6$, and is included if $\epsilon\geq\max\{d_1,0.6-r\}$.
        \item $r\in[0.4,0.6)$: no new bounds are added.
        \item $r\in[0.6,1)$: now the map $V^r_{X_1}\to V^r_{X}$ is itself rank one, and is included if $\epsilon\geq d_1$.
    \end{itemize}
    Therefore (simplifying by using $r\leq d_1+r$, $1-r<r$ when $r>0.5$, etc.), we have
    \[
    \lambda_{X}(2,r)=\begin{cases}
        \min\{1-r,r,\max\{d_1,0.6-r\},\max\{d_2,0.6-r\}\} & \text{ if $r\in[0,0.6)$, and}
        \\\min\{1-r,d_1,d_2\} & \text{ if $r\in[0.6,1)$, and }
        \\0 & \text{ else.}
    \end{cases}
    \]

    Next, we have $\lambda_{X}(3,r)$.
    \begin{itemize}
        \item $r\in[0,0.4)$: the bounds $\epsilon\geq d_1+r$ and $\epsilon\geq r$ arise for the same reasons as in the case of $n=2$, but now we also need to consider the map $V^r_{X}\to V^{r'}_{X}$ for $r'\geq0.6$, included when $\epsilon\geq0.6-r$, and the map $V^r_{X_1}\to V^{r'}_{X}$ for $r'\geq0.4$, which is rank at most two, and is included when $\epsilon\geq\max\{d_1,0.4-r\}$.
        \item $r\in[0.4,0.6)$: now the map $V^r_{X_1}\to V^r_{X}$ is rank two, and included if $\epsilon\geq d_1$. Note that $0.6-r\leq r$ in this range.
    \end{itemize}
    Therefore
    \[
    \lambda_{X}(3,r)=\begin{cases}
        \min\{0.6-r,r,\max\{d_1,0.4-r\},\max\{d_2,0.4-r\}\} & \text{ if $r\in[0,0.4)$, and}
        \\\min\{0.6-r,d_1,d_2\} & \text{ if $r\in[0.4,0.6)$, and }
        \\0 & \text{ else.}
    \end{cases}
    \]

    Next, to compute $\lambda_{X}(4,r)$, we have
    \begin{itemize}
        \item $r\in[0,0.4)$: the map $V^r_{X_1}\to V^r_{X}$ is rank three, and is included if $\epsilon\geq d_1$; the map $V^r_{X_1}\to V^{r'}_{X}$ for $r'\geq0.4$ is rank at most two, and is included if $\epsilon\geq\max\{d_1,0.4-r\}$; the map $V^r_{X}\to V^{r'}_{X}$ for $r'\geq0.6$ is rank two, and included if $\epsilon\geq0.6-r$; the map $0\to V^r_{X}$ is rank zero, and included if $\epsilon\geq r$.
        \item $r\in[0.4,0.6)$: there are no new bounds.
    \end{itemize}
    Therefore
    \[
    \lambda_{X}(4,r)=\min\{d_1,d_2,\max\{d_1,0.4-r\},\max\{d_2,0.4-r\},0.6-r,r\}=\min\{d_1,d_2,0.6-r,r\}
    \]
    when $r\in[0,0.6)$ and is zero otherwise.

    Finally, note that $\lambda_{X}(5,r)=\lambda_{X}(6,r)$ as there are no vector spaces or maps of rank five; since there are no maps of rank six, we have
    \[
    \lambda_{X}(6,r)=\lambda_{X}(6,r)=\min\{d_1,d_2,0.4-r,r\}
    \]
    when $r\in[0,0.4)$ and $\lambda_{X}(6,r)\equiv0$ otherwise. For $n>6$, all $\lambda_{X}(n,r)\equiv0$.

    See Figure \ref{fig:ex3} for the output of our code when $d_1=d_2=0.1$; the plots agree with our computations.
        
\end{example}

\end{document}